\newcommand{\im}{\operatorname{Im}}
\newcommand{\Ker}{\operatorname{Ker}}
\newcommand{\Stab}{\operatorname{Stab}}
\begin{document}
\newtheorem{theorem}{Theorem}[section]
\newtheorem{lemma}[theorem]{Lemma}
\newtheorem{definition}[theorem]{Definition}
\newtheorem{claim}[theorem]{Claim}
\newtheorem{example}[theorem]{Example}
\newtheorem{remark}[theorem]{Remark}
\newtheorem{proposition}[theorem]{Proposition}
\newtheorem{corollary}[theorem]{Corollary}
\newtheorem{observation}[theorem]{Observation}
\newcommand{\subscript}[2]{$#1 _ #2$}
\newtheorem*{theorem*}{Theorem}

\author{Izhar Oppenheim}
\affil{Department of Mathematics, Ben-Gurion University of the Negev, Be'er Sheva 84105, Israel, izharo@bgu.ac.il}
\title{Vanishing of cohomology with coefficients in representations on Banach spaces of groups acting on Buildings}
\maketitle
\begin{abstract}
We prove vanishing of cohomology with coefficients in representations on a large class of Banach spaces for a group acting ``nicely'' on a simplicial complexes based on spectral properties of the 1-dimensional links of the simplicial complex.
\end{abstract}

\section{Introduction}

The study of group actions on metric space is a broad topic in which one studies the interplay between the group structure and the structure of the metric space on which it acts. When considering a group action on Hilbert spaces, property (FH) is an imporant notion which is defined as follows: a group $G$ has property (FH) if every isometric action of $G$ on a Hilbert space admits a fixed point. Property (FH) can be rephrased in a cohomological language as follows: a group has property (FH) if and only if $H^1 (G,\pi)=0$ for any unitary representation of $G$ on a Hilbert space (the proof of this fact can be found for instance in \cite{PropTBook}[Lemma 2.2.6]).

Recently there have been much interest in studying the generalization of property (FH) for group actions on Banach spaces (see \cite{Nowak2} for a survey of recent developments regarding this question). In order to state a generalization of property (FH) in the Banach setting, we recall the following facts taken from \cite{Nowak2}[Section 2.2]: 
\begin{itemize}
\item Any affine action $A$ on a Banach space $X$ is of the form $Ax = Tx+b$ where $T$ is a linear map and $b \in X$.
\item As a result of the previous fact, if $\rho$ defines an affine action of $G$ on $X$, then 
$$\forall g \in G, \rho (g).x = \pi (g).x + b(g),$$
where $\pi : G \rightarrow B(X)$ is a linear representation of $G$ on $X$ called the linear part of $\rho$ and $b:G \rightarrow X$ is a map which satisfies the cocycle condition: 
$$\forall g,h \in G, b(gh) = \pi (g) b(h) + b(g).$$
\item For a group $G$ and a linear representation $\pi$ on a Banach space $X$, $H^1 (G,\pi)=0$ if and only if any affine action with a linear part $\pi$ admits a fixed point.
\end{itemize}

Thus the vanishing of the first cohomology reflects a rigidity phenomenon. In this article, we will explore a generalization of this phenomenon and we will prove vanishing of higher cohomologies for groups acting on simplicial complexes. The idea is that given a ``nice enough'' group action on a simplicial complex $\Sigma$, one can show vanishing of cohomology with coefficients in representations on Banach spaces under suitable assumption on the norm growth of the action and on the geometry of the simplicial complex. This is done by using the interplay between the geometry of the Banach space and the geometry of the simplicial complex as it is reflected in the angles between couples of subgroups of $G$ stabilizing top-dimensional simplices in the simplicial complex (see definition below).

The definition of angle between subgroups in the Hilbert setting is as follows: let $G$ be a group, $\pi$ by a unitary representation of $G$ on a Hilbert space $H$ and let $K_1, K_2 < G$ be subgroups of $G$. The angle between $K_1$ and $K_2$ with respect to $\pi$ is defined as the (Friedrichs) angle between $H^{\pi (K_1)}$ and $H^{\pi (K_2)}$. The angle between $K_1$ and $K_2$ is then defined as the supremum with respect to all unitary representations of $G$. 

This idea of angle was used in the work of Dymara and Januszkiewicz \cite{DymaraJ} to prove property (T) (which is equivalent to property (FH) in this setting) and vanishing of higher cohomologies with coefficients in representations on Hilbert spaces for groups acting on simplicial complexes. Dymara and Januszkiewicz further showed how to bound the angle between the two subgroups using the spectral gap of the Laplacian on a graph generated by these subgroups.

At first glance, this idea seems very much related to the so called ``geometrization of property (T)'' (this term was coined by Shalom \cite{Shalom}), since it uses the spectral gap of a Laplacian to deduce property (T) in a way similar to Zuk's famous criterion for property (T) (see \cite{Zuk}, \cite{BallmannS}). However, at its core, the idea of angle between subgroups is much stronger than Zuk's criterion, because it better captures the behaviour of the group $G$. In \cite{ORobust} the author generalized this idea of angle to the setting of Banach spaces, considering angle between projections instead of angle between subspaces. This new notion of angle was used by the author it to show a strengthened version of Banach property (T) for a large class of Banach spaces. This in turn implies the vanishing of the first group cohomology with coefficients in the isometric representations on this class of Banach spaces. 

The aim of this paper is to generalize the vanishing of cohomologies theorem of Dymara and Januszkiewicz in \cite{DymaraJ} to coefficients in representations on Banach spaces. A major problem with transfering the results of Dymara and Januszkiewicz to Banach space setting was that the angle computations of \cite{DymaraJ}[section 4] heavily relied on the idea that in Hilbert spaces the angle between two subspaces is equal to the angle between their orthogonal complements. However, this idea of computing the angle by passing to the orthogonal complement does not seem to work in our definition of angle between projections. 

The technical heart of this paper is devoted to attaining results regarding angles between projections  in Banach spaces that are similar to the results of Dymara and Januszkiewicz (but without passing to the orthogonal complemet). In order to attain these results, we first explore the idea of angle between more than two projections (this was inspired by the ideas of Kassabov in \cite{Kassabov}). 

After obtaining these technical results, the vanishing theorem can be reproved for coefficients in representations on Banach spaces by the same arguments given in \cite{DymaraJ}. 

In order to apply these results in concrete examples (such as groups groups coming from a BN-pair), we need to bound angles between pairs of subgroups $K_1,K_2 <G$ with respect to representations on Banach spaces. Given a pair of subgroups $K_1,K_2 <G$, this is done by bounding this angle between these subgroups in the Hilbert setting and then (if this angle is large enough to begin with) using this bound in order to get a bound on the angles between these subgroups with respect to representations on Banach spaces that are ``close enough'' to a Hilbert space. Being ``close enough'' to a Hilbert space involves a several step process of deforming a Hilbert space that will be explained in detail below. 

\subsection{Deformations of Hilbert spaces}

We will consider Banach spaces which are deformations of Hilbert spaces. In order to explain which deformations we consider, we need to introduce several ideas from the theory of Banach spaces.

\subsubsection{The Banach-Mazur distance and Banach spaces with ``round'' subspaces}
The Banach-Mazur distance measures a distance between isomorphic Banach spaces:

\begin{definition}
Let $Y_1, Y_2$ be two isomorphic Banach spaces. The (multiplicative) Banach-Mazur distance between $Y_1$ and $Y_2$ is defined as 
$$d_{BM} (Y_1, Y_2) = \inf \lbrace \Vert T \Vert \Vert T^{-1} \Vert : T : Y_1 \rightarrow Y_2 \text{ is a linear isomorphism} \rbrace.$$
\end{definition}

This distance has a multiplicative triangle inequality (the proof is left as an exercise to the reader):
\begin{proposition}
Let $Y_1,Y_2,Y_3$ be isomorphic Banach spaces. Then
$$d_{BM}(Y_1,Y_3) \leq d_{BM}(Y_1,Y_2) d_{BM}(Y_2,Y_3).$$
\end{proposition}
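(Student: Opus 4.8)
The plan is to prove this directly from the definition of $d_{BM}$ by composing isomorphisms. Fix $\varepsilon > 0$. By definition of the infimum, I would first choose a linear isomorphism $T : Y_1 \rightarrow Y_2$ with $\Vert T \Vert \Vert T^{-1} \Vert \leq d_{BM}(Y_1, Y_2) + \varepsilon$, and similarly a linear isomorphism $S : Y_2 \rightarrow Y_3$ with $\Vert S \Vert \Vert S^{-1} \Vert \leq d_{BM}(Y_2, Y_3) + \varepsilon$. Such choices exist precisely because $Y_1 \cong Y_2$ and $Y_2 \cong Y_3$; note this also shows $Y_1 \cong Y_3$, so $d_{BM}(Y_1, Y_3)$ is defined.

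Next I would consider the composition $ST : Y_1 \rightarrow Y_3$, which is again a linear isomorphism with inverse $(ST)^{-1} = T^{-1} S^{-1}$. Using submultiplicativity of the operator norm twice, $\Vert ST \Vert \leq \Vert S \Vert \Vert T \Vert$ and $\Vert (ST)^{-1} \Vert = \Vert T^{-1} S^{-1} \Vert \leq \Vert T^{-1} \Vert \Vert S^{-1} \Vert$. Multiplying these and regrouping the four factors gives
$$\Vert ST \Vert \Vert (ST)^{-1} \Vert \leq \bigl( \Vert T \Vert \Vert T^{-1} \Vert \bigr) \bigl( \Vert S \Vert \Vert S^{-1} \Vert \bigr) \leq \bigl( d_{BM}(Y_1, Y_2) + \varepsilon \bigr)\bigl( d_{BM}(Y_2, Y_3) + \varepsilon \bigr).$$
Since $ST$ is an admissible competitor in the infimum defining $d_{BM}(Y_1, Y_3)$, the left-hand side bounds $d_{BM}(Y_1, Y_3)$ from above.

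Finally I would let $\varepsilon \to 0^+$: the right-hand side tends to $d_{BM}(Y_1, Y_2)\, d_{BM}(Y_2, Y_3)$, yielding the claimed inequality. There is no real obstacle here; the only point requiring a word of care is making sure the infima are genuinely attained up to $\varepsilon$ rather than attained exactly, which is why I phrase the argument with an arbitrary $\varepsilon > 0$ and pass to the limit at the end. (If one is willing to quote that the infimum in the Banach–Mazur distance is attained — which holds for finite-dimensional spaces by compactness — one can skip the $\varepsilon$ bookkeeping entirely, but the $\varepsilon$-argument works in full generality.)
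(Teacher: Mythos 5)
Your proof is correct and is exactly the standard argument the paper has in mind when it leaves this as an exercise: compose near-optimal isomorphisms, use submultiplicativity of the operator norm, and let $\varepsilon \to 0^{+}$. The $\varepsilon$-bookkeeping is handled properly, so there is nothing to add.
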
 

We will be especially interested in the Banach-Mazur distance between $n$-dimensional Banach spaces and $\ell_2^n$. A classical theorem by F. John \cite{John} states for every $n$-dimensional Banach space $Y$, $d_{BM} (Y, \ell_2^n) \leq \sqrt{n}$ and the classical cases in which this inequality is an equality are $\ell_1^n$ and $\ell_\infty^n$. Later, Milman and Wolfson \cite{MilmanWolfson} proved that these classical cases are in some sense generic: \cite{MilmanWolfson}[Theorem 1] states that if $d_{BM} (Y, \ell_2^n) = \sqrt{n}$, then there is $k\geq \frac{\ln (n)}{2 \ln (12)}$ such that $Y$ contains a $k$-dimensional subspace isometric to $\ell_1^k$. 

In this paper we will concern ourselves with Banach spaces whose finite dimensional subspaces are sufficiently ``round'', i.e., sufficiently close to $\ell_2$-spaces. Given a Banach space $X$ and a constant $k \in \mathbb{N}$, we use the following notation $d_k (X)$ taken from the work of de Laat and de la Salle \cite{LaatSalle2}:
$$d_k (X) = \sup \lbrace d_{BM} (Y) : Y \subseteq X, \dim (Y) \leq k \rbrace.$$

We further introduce the following notation: given a constant $r \geq 2$ and a constant $C_1 \geq 1$, we denote $\mathcal{E}_1 (r,C_1)$ to be the class of Banach spaces defined as follows:
\begin{align*}
\mathcal{E}_1 (r, C_1 ) = 
 \lbrace X : \forall k \in \mathbb{N}, d_k (X) \leq C_1 k^{\frac{1}{r}} \rbrace.
\end{align*}

The reader should note that for every choice of $r \geq 2, C_1 \geq 1$, the class $\mathcal{E}_1 (r, C_1 )$ always contains the class of all Hilbert spaces, since for every Hilbert space $H$ we have that $d_k (H) =1$ for every $k$. 

An example of Banach spaces contained in $\mathcal{E}_1 (r,C_1)$ are spaces of bounded type and cotype. The definitions of  type and cotype are given in the background section below, but for our uses, it is sufficient to state the following theorem due to Tomczak-Jaegermann \cite{Tomczak-Jaegermann}[Theorem 2 and the corollary after it]: if $X$ is a Banach space of type $p_1$, cotype $p_2$ and corresponding constants $T_{p_1} (X)$, $C_{p_2} (X)$ (see definitions below), then $d_k (X) \leq 4 T_{p_1} (X) C_{p_2} (X) k^{\frac{1}{p_1} - \frac{1}{p_2}}$. 

This theorem yields that for every $r>2$, every $\frac{1}{r}$  and every $C_1 \geq 1$, the class $\mathcal{E}_1 (r,C_1 )$ contains all Banach spaces $X$ with type $p_1$, cotype $p_2$ and corresponding constants $T_{p_1} (X)$, $C_{p_2} (X)$  such that $\frac{1}{p_1} - \frac{1}{p_2} \leq \frac{1}{r}$ and $4 T_{p_1} (X) C_{p_2} (X) \leq C_1$.     

\subsubsection{Interpolation}

Two Banach spaces $X_0, X_1$ form a compatible pair $(X_0,X_1)$ if there is a continuous linear embedding of both $X_0$ and $X_1$ in the same topological vector space. The idea of complex interpolation is that given a compatible pair $(X_0,X_1)$ and a constant $0 < \theta < 1$, there is a method to produce a new Banach space $[X_0, X_1]_\theta$ as a ``combination'' of $X_0$ and $X_1$. We will not review this method here, and the interested reader can find more information on interpolation in \cite{InterpolationSpaces}.

We will introduce the following notation: let $\mathcal{E}$ be a class of Banach spaces and let $0 < \theta_2 \leq 1$ be a constant. Denote $\mathcal{E}_2 (\mathcal{E}, \theta_2)$ the class of Banach spaces defined as follows
\begin{align*}
\mathcal{E}_2 (\mathcal{E},\theta_2)= \lbrace X : \exists X_1 \in \mathcal{E}, \exists X_0 \text{ Banach}, \exists \theta_2 \leq \theta \leq 1 \text{ such that } X=[X_0,X_1]_\theta \rbrace.
\end{align*} 

We will be interested with composing this definition with $\mathcal{E}_1 (r,C_1)$ defined above and considering $\mathcal{E}_2 (\mathcal{E}_1 (r,C_1),\theta_2)$. 

As noted above $\mathcal{E}_1 (r,C_1)$ contains the class of all the Hilbert spaces. This brings us to consider the following definition is due to Pisier in \cite{Pisier}: a Banach space $X$ is called strictly $\theta$-Hilbertian for $0 < \theta \leq 1$, if there is a compatible pair $(X_0,X_1)$ such that $X_1$ is a Hilbert space such that $X = [X_0, X_1]_\theta$. Examples of strictly $\theta$-Hilbertian spaces are $L^p$ space and non-commutative $L^p$ spaces, where in these cases $\theta = \frac{2}{p}$ if $2 \leq  p  < \infty $ and $\theta = 2 - \frac{2}{p}$ if $1 < p \leq 2$ (a reader who is not familiar with non-commutative $L^p$ spaces can find a detailed account in \cite{PisierXu2}). 

Another source of examples for strictly $\theta$-Hilbertian spaces are superreflexive Banach lattices. Recall that a Banach space $X$ is called uniformly convex if 
$$\sup \left\lbrace \left\Vert \frac{x+y}{2} \right\Vert : \forall x,y \in X, \Vert x \Vert = \Vert y \Vert =1, \Vert x - y \Vert \geq \varepsilon \right\rbrace <1 \text{ for every } \varepsilon >0.$$
Further recall that a Banach space $X$ is called superreflexive if all its ultrapowers are reflexive, which is equivalent by \cite{BenLin}[Theorem A.6] to $X$ being isomorphic to a uniformly convex space. A Banach lattice is a Banach space with a ``well-behaved'' partial order on it - the definition is rather techincal and we will not recall it here (for the exact defition of a Banach lattice and further properties of it, the reader is reffered to \cite{BenLin}[Appendix G]). 

Pisier \cite{Pisier} proved that any superreflexive Banach lattice is strictly $\theta$-Hilbertian and suggested that this result might by true even for superreflexive Banach spaces which are not Banach lattices.

\subsubsection{Passing to a isomorphic space}
The last deformation we want to consider is passing to an isomorphic space. We introduce the following notation: let $\mathcal{E}$ be a class of Banach spaces and let $C_3 \geq 1$ be a constant, denote by $\mathcal{E}_3 (\mathcal{E},C_3)$ the class of Banach spaces defined as 
\begin{align*}
\mathcal{E}_3 (\mathcal{E}, C_3) = \lbrace X : \exists X' \in \mathcal{E} \text{ such that } d_{BM} (X,X') \leq C_3 \rbrace.
\end{align*}

\subsubsection{Passing to the closure}
Our criterion for vanishing of cohomology relies on geometric properties of a Banach space that are stable under certain operations. Therefore, we can enlarge our Banach class by passing to the closure under these operations: for a class of Banach spaces $\mathcal{E}$, denote by $\overline{\mathcal{E}}$ the smallest class of Banach spaces containing $\mathcal{E}$ that is closed under passing to quotients, subspaces, $l_2$-sums and ultraproducts.

\subsubsection{Composing the deformations}

The class of Banach spaces we will want to consider is the composition of the all the deformations described above, i.e., we start with a Hilbert space and use $\mathcal{E}_1 (r, C_1)$ to consider deformations of it, on that class we consider interpolation, then pass to isomorphic spaces with bounded Banach-Mazur distance and finish by passing to the closure. 
To put it all together, we start with constants $r \geq 2$, $C_1 \geq 1$, $1 \geq \theta_2 >0$ and $C_3 \geq 1$ and consider the class $\overline{\mathcal{E}_3 (\mathcal{E}_2 (\mathcal{E}_1 (r,C_1),\theta_2),C_3)}$.

\subsection{The main theorem for BN-pair groups}

Following Dymara and Januszkiewicz, our vanishing of cohomology results are true for groups acting on simplicial complexes given that certain conditions are fulfilled (conditions $(\mathcal{B}1)-(\mathcal{B} 4)$ and $(\mathcal{B}_{\delta, r})$ stated below). However, currently, our only examples of groups acting on complexes satisfying these conditions are groups with a BN-pair (e.g., classical BN-pair groups acting on Euclidean buildings or 2-spherical Kac-Moody groups). Therefore, in this introduction we will state our main result only for BN-pair groups (the more general Theorem \ref{vanishing of cohomology by conditions on the links and cont dual assumption} is given below). 

In order to state the main theorem, we recall some generalities regarding BN-pair groups (a reader not familiar with BN-pair groups can find and extensive treatment of this subject in \cite{BuildingsBook}[Chapter 6]) and introduce a few notations regarding representations. 

Let $G$ be a BN-pair group and let $\Sigma$ be the $n$-dimensional building on which it acts. Then $G$ acts on $\Sigma$ cocompactly and $\triangle = \Sigma / G$ is a single chamber of $\Sigma$. We assume that $n >1$, i.e., that $\Sigma$ is not a tree and denote $\triangle (k)$ to be the $k$-dimensional faces of $\Sigma / G$. We assume further that there is some $l \in \mathbb{N}$ such that all the $l$-dimensional links of $\Sigma$ are compact.  Be this assumption, for every $\tau \in \triangle (n-2)$, the isotropy group $G_\tau = \Stab (\tau)$ is compact and $G$ is generated by $\bigcup_{\tau \in \triangle (n-2)} G_\tau$. Let $\mathcal{E}$ be a class of Banach spaces and let $s_0 >0$ be a constant. Denote $\mathcal{F} (\mathcal{E}, G, s_0)$ to be all the continuous representations $(\pi,X)$ of $G$ such that $X \in \mathcal{E}$ and 
$$\sup_{g \in \bigcup_{\tau \in \triangle (n-2)} G_\tau} \Vert \pi (g) \Vert \leq e^{s_0}.$$ 
Note that $\mathcal{F} (\mathcal{E}, G, s_0)$ contains all the isometric representation of $G$ on some $X \in \mathcal{E}$, but is also contains representations which are not uniformly bounded. Indeed, if $G$ is taken with the word norm $\vert . \vert$ with respect to $\bigcup_{\tau \in \triangle (n-2)} G_\tau$, then $\mathcal{F} (\mathcal{E}, G, s_0)$ contains all the representations $\pi$ such that $\Vert \pi (g) \Vert \leq e^{s_0 \vert g \vert}$ for every $g \in G$. Denote further $\mathcal{F}_0 (\mathcal{E}, G, s_0)$ to be 
$$\mathcal{F}_0 (\mathcal{E}, G, s_0) = \lbrace \pi \in \mathcal{F} (\mathcal{E}, G, s_0) : \pi^* \text{ is a continuous representation} \rbrace,$$
where $\pi^*$ is the dual representation of $\pi$.

After all these notations and definitions, we are ready to state our main theorem:

\begin{theorem*}
Let $G$ be a group coming from a BN-pair and let $\Sigma$ be the $n$-dimensional building on which it acts. Assume that $n >1$ and there is some $l \in \mathbb{N}$ such that all the $l$-dimensional links of $\Sigma$ are compact. Denote by $q+1$ the thickness of the building $\Sigma$.

Let $r>20$, $C_1 \geq 1$, $1 \geq \theta_2 >0$, $C_3 \geq 1$ be constants. Then there are constants $s_0 = s_0 (n)$ and $Q = Q(n, C_1 , \theta_2, C_3)$ such that if $q \geq Q$, then for every 
$$\pi \in \mathcal{F}_0 (\overline{\mathcal{E}_3 (\mathcal{E}_2 (\mathcal{E}_1 (r, C_1),\theta_2),C_3)}, G, s_0),$$
we have that
$$H^i (G,\pi) = 0, i=1,...,l.$$
\end{theorem*}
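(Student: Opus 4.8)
The plan is to deduce this BN-pair statement from the general criterion (the ``more general Theorem \ref{vanishing of cohomology by conditions on the links and cont dual assumption}'' referenced in the text), so the real work is to verify that a BN-pair group acting on its building satisfies conditions $(\mathcal{B}1)$--$(\mathcal{B}4)$ and $(\mathcal{B}_{\delta,r})$ for a suitable Banach class and a suitable norm-growth bound $s_0$, once the thickness $q+1$ is large enough. First I would record the standard structural facts about the building: $G$ acts chamber-transitively and cocompactly on the $n$-dimensional building $\Sigma$, the quotient $\Sigma/G$ is a single chamber $\triangle$, the stabilizers $G_\tau$ of codimension-$2$ faces $\tau \in \triangle(n-2)$ are compact (using the hypothesis that some $l$-dimensional link is compact, which forces local finiteness in the relevant range), and $G$ is generated by $\bigcup_{\tau \in \triangle(n-2)} G_\tau$. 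This gives the combinatorial skeleton the general theorem needs: a finite ``chamber'' complex $\triangle$, a family of (compact, hence averaging-admitting) subgroups indexed by $\triangle(n-2)$, and the generation property. The choice $s_0 = s_0(n)$ will come out of whatever the general theorem demands on the product of norms along the $1$-skeleton; I would simply quote the value forced there.

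Next I would handle the geometric/spectral input, which is where the thickness enters. For each edge in the chamber $\triangle$ (equivalently each $\tau \in \triangle(n-2)$) the relevant $1$-dimensional link of $\Sigma$ is a finite bipartite-type graph whose combinatorics are governed by the rank-$2$ residues of the building; its adjacency/Laplacian spectral gap is bounded below in terms of $q$ and improves to the optimal Hilbert-space angle as $q \to \infty$ (this is exactly the Dymara--Januszkiewicz style estimate, and it is why the $2$-sphericity / ``$n>1$, not a tree'' hypothesis is needed). Thus for $q$ large the Hilbert-space angle between the pairs of subgroups $G_{\tau}$, $G_{\tau'}$ sharing a codimension-$3$ face is as close to $\pi/2$ as we like. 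Then, invoking the technical angle-transfer results that are the announced ``technical heart'' of the paper, a Hilbert angle close enough to $\pi/2$ transfers to a good-enough angle between the associated projections for every Banach space in the class $\overline{\mathcal{E}_3(\mathcal{E}_2(\mathcal{E}_1(r,C_1),\theta_2),C_3)}$, with the required closeness quantified by $n$, $C_1$, $\theta_2$, $C_3$ and the restriction $r>20$. This is precisely what produces the threshold $Q = Q(n, C_1, \theta_2, C_3)$: choose $q \geq Q$ so that the Hilbert angle is within the transfer window, so that condition $(\mathcal{B}_{\delta,r})$ holds for the deformed-Hilbert class with the $\delta$ required by the dimension $n$ and target range $i \le l$.

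With $(\mathcal{B}1)$--$(\mathcal{B}4)$ (combinatorics, compact stabilizers, generation, and the local structure of links) and $(\mathcal{B}_{\delta,r})$ (the quantitative angle bound, valid because $q \ge Q$) all in place for every $\pi \in \mathcal{F}_0(\overline{\mathcal{E}_3(\mathcal{E}_2(\mathcal{E}_1(r,C_1),\theta_2),C_3)}, G, s_0)$, I would invoke Theorem \ref{vanishing of cohomology by conditions on the links and cont dual assumption} directly to conclude $H^i(G,\pi)=0$ for $i=1,\dots,l$. The continuity of $\pi$ and of $\pi^*$ (the defining condition of $\mathcal{F}_0$) is used exactly where the general theorem needs it — to run the cohomological reduction to links and to form the averaging projections for the compact groups $G_\tau$ and their duals. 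The one genuine subtlety to be careful about is matching the target cohomological range to the hypotheses: the vanishing is claimed up to degree $l$, which is the dimension in which links are compact, so I would make sure the general theorem's conclusion is stated for $i \le l$ (not $i \le n-1$), and that the compactness-of-$l$-dimensional-links hypothesis is exactly what feeds that range.

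The main obstacle, in my view, is not any single step above but the bookkeeping of constants: one must check that the spectral-gap lower bound coming from thickness $q$, when pushed through the Banach angle-transfer machinery, still clears the threshold $\delta = \delta(n)$ required by $(\mathcal{B}_{\delta,r})$ uniformly over the entire deformed class — i.e. that $Q$ genuinely depends only on $n, C_1, \theta_2, C_3$ and not on the individual representation or the individual space in the class. Everything else (chamber-transitivity, compact stabilizers, generation) is classical BN-pair theory; the real content is that the $q\to\infty$ limit of the Hilbert angle is good enough to survive all four deformations $\mathcal{E}_1, \mathcal{E}_2, \mathcal{E}_3$ and the closure, which is exactly what the restriction $r > 20$ is calibrated to guarantee.
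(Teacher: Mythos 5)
Your proposal follows essentially the same route as the paper: conditions $(\mathcal{B}1)$--$(\mathcal{B}4)$ come from the Dymara--Januszkiewicz structural results for buildings, condition $(\mathcal{B}_{\delta,r'})$ (for an intermediate $r'$ with $20 < r' < r$) is verified for large thickness via the Feit--Higman spectral computation for generalized $m$-gons, and the conclusion is then read off from the general Theorem \ref{vanishing of cohomology by conditions on the links and cont dual assumption}. The only point worth flagging is that the relevant quantitative condition is not merely that the Hilbert angle tends to the optimum, but that $(1-\kappa)V_{\min}^{1/r'}\to 0$ as $q\to\infty$ -- the link size enters through the Schatten-norm bound, and this is precisely what forces the threshold $r>20$ -- but your sketch correctly attributes the calibration of $r$ to this transfer step.
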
  

\begin{remark}
In the above theorem, when considering $\mathcal{E}_1 (r, C_1)$ we took $r>20$. In many cases, this choice can be improved, i.e., $r$ can be taken to be smaller, if the codimension $1$ links of the building $\Sigma$ are known. For instance, if $\Sigma$ is known to be an $\widetilde{A}_n$ building, then we have the same theorem above with $r>4$. The precise statement of this fact is given in Corollary \ref{vanishing of cohomology for BN pair groups}
\end{remark}

\subsection{Examples of Banach spaces for which the theorem holds}

In the main theorem above, we considered only representations whose dual is continuous. This might seem to be a major restriction, but we will show below that the class of representations that we are considering is still very rich. We will do so by showing that there are interesting examples (families of) of Banach spaces in $\overline{\mathcal{E}_3 (\mathcal{E}_2 (\mathcal{E}_1 (r, C_1),\theta_2),C_3)}$ with $r>20$ for which each continuous representation has a continuous dual. 

Indeed, \cite{Megre}[Corollary 6.9] states that if $X$ is an Asplund Banach space then for every continuous representation $\pi$, the dual representation $\pi^*$ is also continuous. The exact definition of Asplund spaces in given in the next section (along with a good reference regarding these spaces), but for our needs, it is enough to recall that any reflexive Banach space is an Asplund space. Using this fact, we will show that $\overline{\mathcal{E}_3 (\mathcal{E}_2 (\mathcal{E}_1 (r, C_1),\theta_2),C_3)}$ contains many interesting reflexive spaces.       

First, for a Banach space $X$, we recall that $X$ is called uniformly non-square if there is some $\varepsilon >0$ such that for every $x, y \in X$ with in the unit ball of $X$, $\min \lbrace \Vert \frac{x+y}{2} \Vert, \Vert \frac{x-y}{2} \Vert \rbrace \leq 1- \varepsilon$. James \cite{James}[Theorem 1.1] showed that every uniformly non-square space is reflexive. An easy exercise shows that if $d_2 (X) < \sqrt{2}$ then $X$ is uniformly non-square. Therefore, for every $X \in \mathcal{E}_1 (r, C_1)$, if $d_2 (X) < \sqrt{2}$, then $X$ is reflexive, i.e., every $X \in \mathcal{E}_1 (r, C_1)$ whose $2$-dimensional subspaces are not too distorted is a reflexive space. 

Second, since $\mathcal{E}_1 (r, C_1)$ contains all Hilbert space, we have that $\mathcal{E}_2 (\mathcal{E}_1 (r, C_1),\theta_2)$ contains all $\theta$-Hilbertian spaces with $\theta \geq \theta_2$. As noted above this includes $L^p$ spaces and non commutative $L^p$ spaces with $\frac{2}{2-\theta_2} \leq p \leq \frac{2}{\theta_2}$. By \cite{PisierXu2}[Theorem 5.1] these spaces are uniformly convex and therefore superreflexive (hence reflexive). Also, $\mathcal{E}_2 (\mathcal{E}_1 (r, C_1),\theta_2)$ also includes a subclass of the class of superreflexive Banach lattices (as noted above, for any superreflexive Banach lattice $X$, there is $\theta >0$ such that $X$ is $\theta$-Hilbertian).

Third, reflexivity of Banach spaces is preserved under isomorphism and therefore $\mathcal{E}_3 (\mathcal{E}_2 (\mathcal{E}_1 (r, C_1),\theta_2),C_2)$ contains isomorphic spaces to the reflexive Banach spaces contained in $\mathcal{E}_2 (\mathcal{E}_1 (r, C_1),\theta_2)$.

Last, reflexivity is preserved under passing to a closed subspace, taking a quotient by a closed subspace and countable $l_2$-sums. Ultrapowers does not preserve reflexivity, but by definition, if $X$ is superreflexive, then all its ultrapowers are reflexive. Therefore passing to the closure $\overline{\mathcal{E}_3 (\mathcal{E}_2 (\mathcal{E}_1 (r, C_1),\theta_2),C_3)}$ provides more examples of reflexive Banach spaces constructed from $\mathcal{E}_3 (\mathcal{E}_2 (\mathcal{E}_1 (r, C_1),\theta_2),C_3)$ by these operations. 

\begin{remark}
Above we have a list of families of Banach spaces (e.g., $L^p$ spaces or uniformly non-square spaces in $\mathcal{E} (r,C_1)$ when $r>20$) for which our main theorem holds for every representation in which the norm doesn't grow too fast with respect to the word norm (in particular, for every isometric representation). As far as we know, for each one of these families our vanishing of higher cohomologies results are new even in the classical case of BN-pair groups acting on Euclidean buildings. 
\end{remark}

\textbf{Structure of this paper.} Section 2 includes all the needed background material. Section 3 is devoted to proving the main technical result regarding angles between projections in Banach spaces. In section 4, we formulate and prove our main results regarding vanishing of cohomologies for groups acting on simplicial complexes. The appendix contains technical results regarding angles between projections under a weaker assumptions that the ones used in section 3, that may be of independent interest.

\section{Background}
\subsection{Groups acting on simplicial complexes}
\label{Groups acting on simplicial complexes subscetion}
Here we present the set up needed for our results of groups acting on simplicial complexes.  We start by recalling some definitions given by Dymara and Januszkiewicz in \cite{DymaraJ}[section 1]. 
 
Let $\Sigma$ be a countable pure $n$-dimensional simplicial complex with $n \geq 2$. The top dimensional simplices of $\Sigma$ will be called chambers and $\Sigma$ will be called gallery connected if for any two chambers $\sigma, \sigma'$ there is a sequence of chambers 
$$\sigma = \sigma_1, \sigma_2,...,\sigma_k = \sigma',$$
such that for every $i$, $\sigma_i \cap \sigma_{i+1}$ is a simplex of co-dimension $1$ in $\Sigma$. 
 
Denote by $Aut (\Sigma)$ the group of simplicial automorphisms of $\Sigma$. On $Aut (\Sigma)$ define the compact-open topology whose basis are the sets $U(K,g_0)$ where $g_0 \in Aut (\Sigma)$, $K \subseteq \Sigma$ compact and $U(K,g_0)$ is defined as
$$U(K,g_0) =  \lbrace g \in Aut (\Sigma) : g \vert_{K} = g_0 \vert_K \rbrace.$$
Let $G < Aut (\Sigma)$ be a closed subgroup of $Aut (\Sigma)$. 

Given a continuous representation $\pi$ of $G$ on a Banach space, one can define $H^* (G, \pi)$ and $H^* (\Sigma, \pi)$. We will not review these definitions here and a reader unfamilier with these definitions can find them in \cite{DymaraJ}[Section 3] and reference therein. The main fact that we will use is that one can compute $H^* (G, \pi)$ based on $H^* (\Sigma, \pi)$:
\begin{lemma}\cite{BorelW}[X.1.12]
\label{locally finite simplicial complex cohomology lemma}
Let $\Sigma$ be a simplicial complex, $G < Aut (\Sigma)$ be a closed subgroup and $\pi$ be a representation of $G$ on a Banach space. Assume that $\Sigma$ is contractible and locally finite and that the action of $G$ on $\Sigma$ is cocompact, then $H^* (G, \pi) = H^* (\Sigma, \pi)$.
\end{lemma}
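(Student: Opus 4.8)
The statement is the standard comparison between continuous group cohomology and the cohomology of a contractible complex on which the group acts, and it is quoted verbatim from \cite{BorelW}. For completeness I indicate the argument one would give. The plan is to exhibit the simplicial chain complex of $\Sigma$ as a resolution of the trivial coefficient module by modules that are acyclic for the functor computing $H^*(G,-)$, and then to identify the resulting computation with $H^*(\Sigma,\pi)$.

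First I would record the consequences of the hypotheses on the action. Since $G$ is a closed subgroup of $Aut(\Sigma)$ in the compact-open topology and $\Sigma$ is locally finite, the pointwise stabilizer $G_\sigma$ of each simplex $\sigma$ is an \emph{open} subgroup of $G$; cocompactness of the action then forces $G_\sigma$ to be compact and forces the set of $G$-orbits of $k$-simplices to be finite for every $k$. In particular, for each simplex $\sigma$ the restriction $\pi|_{G_\sigma}$ is a continuous representation of a compact group, so by Banach--Steinhaus $\sup_{g \in G_\sigma} \Vert \pi(g) \Vert < \infty$ and one may integrate continuous $X$-valued functions on $G_\sigma$ against normalized Haar measure. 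This is precisely the input that lets the argument run with Banach (rather than merely Hilbert) coefficients.

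Next I would form the augmented simplicial chain complex $\cdots \to C_1(\Sigma) \to C_0(\Sigma) \to \mathbb{R} \to 0$. Because $\Sigma$ is contractible this complex is exact, so it is a resolution of the trivial $G$-module $\mathbb{R}$. By the orbit description above, each $C_k(\Sigma)$ is a \emph{finite} direct sum of modules induced from the compact open stabilizers $G_\sigma$ (twisted by the orientation characters of the simplices). Passing to the complex of continuous $G$-equivariant $X$-valued cochains and applying Shapiro's lemma reduces the $G$-cohomology of each term in degree $i$ to $H^i(G_\sigma, X)$; the averaging operator built from Haar measure on $G_\sigma$ then shows $H^i(G_\sigma, X) = 0$ for $i \geq 1$. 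Hence every term of the resolution is acyclic for $H^*(G,-)$, so the resolution may be used to compute $H^*(G,\pi)$, and the resulting cochain complex is, by construction, the one defining $H^*(\Sigma,\pi)$. The asserted equality follows.

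The only genuinely delicate point is the functional-analytic one isolated in the second paragraph: compactness of the stabilizers is what makes the averaging argument valid for Banach coefficients, and finiteness of the number of orbits in each degree is what keeps the cochain complexes honest finite products so that no convergence questions arise. Everything else is homological bookkeeping, and since the precise definitions of $H^*(G,\pi)$ and $H^*(\Sigma,\pi)$ for Banach coefficients are the ones used in \cite{DymaraJ} and \cite{BorelW}, I would in the paper simply invoke \cite{BorelW}[X.1.12] rather than reproduce the details.
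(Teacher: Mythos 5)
The paper offers no proof of this lemma at all: it is quoted as a citation to \cite{BorelW}[X.1.12], exactly as you anticipate in your closing paragraph. Your sketch is the standard argument behind that reference (acyclic resolution by induced modules from compact open stabilizers, Shapiro's lemma, and averaging over Haar measure to kill the higher cohomology of the compact stabilizers), and it is correct in outline. One small correction: compactness of the stabilizers $G_\sigma$ is not a consequence of cocompactness of the action but of local finiteness together with connectedness of $\Sigma$ --- the stabilizer of a simplex embeds as a closed subgroup of the inverse limit of the (finite) automorphism groups of the combinatorial balls around that simplex, hence is compact; what cocompactness buys is the finiteness of the set of $G$-orbits in each degree, which is what keeps each $C_k(\Sigma)$ a finite sum of induced modules and avoids convergence issues in the cochain complexes. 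With that attribution fixed, your outline matches the argument in Borel--Wallach, and the paper's choice to cite rather than reprove it is the right call.
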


The above lemma assumes that $\Sigma$ is locally finite (i.e., that the link of every vertex is compact). In order to compute the cohomology of $G$ in cases where $\Sigma$ is not locally finite, Dymara and Januszkiewicz introduced the following definition of the core of $\Sigma$:

\begin{definition}\cite{DymaraJ}[Definition 1.3]
Let $\Sigma$ be a simplicial complex such that every link of $\Sigma$ is either compact or contractible (including $\Sigma$ itself, which is the link of the empty set) and such that the $0$-dimensional links of $\Sigma$ are finite. Denote $\Sigma '$ to be the first barycentric subdivision of $\Sigma$. The core  of $\Sigma$, denoted $\Sigma_D$, is the subcomplex of $\Sigma'$ spanned by the barycenters of simplices of $\Sigma$ with compact links. 
\end{definition}

\begin{lemma}\cite{DymaraJ}[Lemma 1.4]
Let $\Sigma$ be an infinite simplicial complex such that every link of $\Sigma$ is either compact or contractible (in particular $\Sigma$ is contractible, because it is the link of $\emptyset$) and such that the $0$-dimensional links of $\Sigma$ are finite. Then $\Sigma_D$ is contractible.
\end{lemma}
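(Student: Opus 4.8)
The plan is to exhibit $\Sigma_D$ as a deformation retract of the first barycentric subdivision $\Sigma'$; since $|\Sigma'|\cong|\Sigma|$ and $\Sigma$ is contractible (being the link of $\emptyset$, and infinite hence non-compact, so the hypothesis forces contractibility), this gives the lemma. I would argue by induction on $n=\dim\Sigma$. Call a nonempty simplex $\sigma$ of $\Sigma$ \emph{thin} if $\operatorname{lk}_\Sigma(\sigma)$ is compact and \emph{fat} otherwise, and let $b_\sigma$ denote its barycenter; then $\Sigma_D$ is exactly the full subcomplex of $\Sigma'$ on $\{b_\sigma:\sigma\text{ thin}\}$, i.e.\ the order complex of the poset of thin simplices. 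Using the standard identity $\operatorname{lk}_\Sigma(\tau)=\operatorname{lk}_{\operatorname{lk}_\Sigma(\sigma)}(\tau\setminus\sigma)$ for $\sigma\subseteq\tau$, a compact link forces all cofaces to have compact links; equivalently, thinness is inherited by cofaces and fatness propagates to faces. In particular every simplex of codimension $\le 1$ is thin (the link of a chamber is empty, and the link of a codimension-$1$ simplex is a $0$-dimensional link, finite by hypothesis), so every fat simplex has dimension between $0$ and $n-2$.

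The inductive input is the following: for a fat simplex $\tau$, the complex $L:=\operatorname{lk}_\Sigma(\tau)$ has dimension at most $n-1$, is infinite (being non-compact), and again satisfies the hypotheses of the lemma, since links of $L$ are links of $\Sigma$ and a $0$-dimensional link of $L$ is a $0$-dimensional link of $\Sigma$. Under the order isomorphism between $\{\sigma:\sigma\supsetneq\tau\}$ and the poset of nonempty simplices of $L$, thin simplices correspond to thin ones (again by the link-of-link identity), so the order complex of $\{\sigma\supsetneq\tau:\sigma\text{ thin}\}$ is precisely the core $L_D$; by the induction hypothesis $L_D$ is contractible.

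Now filter $\Sigma'$ as $\Sigma_D=Y_{-1}\subseteq Y_0\subseteq\cdots\subseteq Y_{n-2}=\Sigma'$, where $Y_d$ is the full subcomplex of $\Sigma'$ spanned by all thin barycenters together with the barycenters of fat simplices of dimension $\le d$. Passing from $Y_{d-1}$ to $Y_d$ adjoins the vertices $b_\tau$ for $\tau$ a fat $d$-simplex. For each such $\tau$, the link of $b_\tau$ in $Y_d$ is the join of the order complex of the nonempty proper faces of $\tau$ (none removed in $Y_d$, so a copy of $S^{d-1}$, the empty complex when $d=0$) with the order complex of $\{\sigma\supsetneq\tau:\sigma\text{ thin}\}=L_D$; as a join with a contractible factor, $\operatorname{lk}_{Y_d}(b_\tau)$ is contractible. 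Distinct fat $d$-simplices are incomparable, so no simplex of $Y_d$ contains two of the $b_\tau$, whence the closed stars $\overline{\operatorname{st}}_{Y_d}(b_\tau)=b_\tau*\operatorname{lk}_{Y_d}(b_\tau)$ pairwise meet only inside $Y_{d-1}$ and each meets $Y_{d-1}$ exactly in its base $\operatorname{lk}_{Y_d}(b_\tau)$. Thus $Y_d$ is obtained from $Y_{d-1}$ by attaching the contractible cones $b_\tau*\operatorname{lk}_{Y_d}(b_\tau)$ along their contractible bases, and collapsing all these cones onto their bases simultaneously exhibits $Y_{d-1}$ as a deformation retract of $Y_d$. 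Composing the finitely many retractions $Y_{n-2}\to Y_{n-3}\to\cdots\to Y_{-1}$ makes $\Sigma_D$ a deformation retract of $\Sigma'\simeq\Sigma$, hence contractible. The base case $n\le 1$ is immediate: there are no fat simplices of dimension $\ge 0$, so $\Sigma_D=\Sigma'$, which is contractible since $\Sigma$ is.

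I expect the real work to lie entirely in the bookkeeping of the inductive step: verifying that $\operatorname{lk}_{Y_d}(b_\tau)$ is the asserted join with $L_D$, and that the cones attached in going from $Y_{d-1}$ to $Y_d$ overlap only inside $Y_{d-1}$ — this is what lets one collapse them all at once, and so avoids any transfinite argument even when some dimension contains infinitely many fat simplices. The remaining ingredients (the link-of-link identity, the contractibility of a join with a contractible factor, and the stability of the hypotheses under passage to a link) are routine.
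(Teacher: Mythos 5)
Your proof is correct, and it is essentially the argument of the source: the paper only quotes this statement as \cite{DymaraJ}[Lemma 1.4] without reproducing a proof, and the proof of Dymara--Januszkiewicz is exactly this deformation retraction of $\Sigma'$ onto $\Sigma_D$, deleting barycenters of simplices with non-compact links in order of decreasing dimension and using induction on links to identify the link of each deleted vertex as the join of a sphere with the (contractible, by induction) core of the corresponding link of $\Sigma$. The one implicit hypothesis worth flagging is finite-dimensionality of $\Sigma$, which your induction and finite filtration both use; this is harmless here, since every complex considered in the paper is pure and $n$-dimensional.
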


Note that if the assumption that the $0$-dimensional links of $\Sigma$ are finite implies that $\Sigma_D$ is locally finite. Also note that any closed subgroup $G < Aut (\Sigma)$ is also a closed subgroup in $Aut (\Sigma_D)$. Therefore combining the above lemma with Lemma \ref{locally finite simplicial complex cohomology lemma} above yields the following corollary:
\begin{corollary}
Let $\Sigma$ be an infinite pure $n$-dimensional simplicial complex, $G < Aut (\Sigma)$ be a closed subgroup and $\pi$ be a representation of $G$ on a Banach space. Assume that every link of $\Sigma$ is either compact or contractible and such that the $0$-dimensional links of $\Sigma$ are finite. If the action of $G$ on $\Sigma$ is cocompact, then $H^* (G, \pi) = H^* (\Sigma_D, \pi)$.
\end{corollary}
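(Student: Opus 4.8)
The plan is to deduce this corollary by combining the two preceding lemmas, so the proof is essentially a short bookkeeping argument. First I would check the hypotheses of \cite{DymaraJ}[Lemma 1.4]: since $\Sigma$ is infinite, pure $n$-dimensional, every link is either compact or contractible, and the $0$-dimensional links are finite, that lemma applies and gives that $\Sigma_D$ is contractible. Next I would observe, as the paper already remarks, that the finiteness of the $0$-dimensional links forces $\Sigma_D$ to be locally finite: $\Sigma_D$ is built from the barycentric subdivision $\Sigma'$ by keeping only barycenters of simplices with compact link, and the link of such a barycenter in $\Sigma_D$ injects into a finite complex assembled from the (finite) $0$-dimensional links, hence is compact.

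Then I would address the group-theoretic compatibility. A closed subgroup $G < Aut(\Sigma)$ acts simplicially on $\Sigma'$ (automorphisms of $\Sigma$ permute simplices, hence permute barycenters), and since the property ``has compact link'' is $G$-invariant, $G$ preserves the subcomplex $\Sigma_D$; thus $G$ maps into $Aut(\Sigma_D)$, and one checks this map is a continuous injection with closed image with respect to the compact-open topologies, so $G$ is (identified with) a closed subgroup of $Aut(\Sigma_D)$. One also needs that the $G$-action on $\Sigma_D$ is cocompact: this follows because $\Sigma_D$ is a $G$-invariant subcomplex of the barycentric subdivision $\Sigma'$, and cocompactness of the $G$-action on $\Sigma$ passes to $\Sigma'$ and then to any $G$-invariant subcomplex.

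Finally, with $\Sigma_D$ contractible, locally finite, and carrying a cocompact action of the closed subgroup $G < Aut(\Sigma_D)$, Lemma \ref{locally finite simplicial complex cohomology lemma} (i.e.\ \cite{BorelW}[X.1.12]) applies verbatim to $\Sigma_D$ in place of $\Sigma$ and yields $H^* (G, \pi) = H^* (\Sigma_D, \pi)$ for any Banach-space representation $\pi$ of $G$, which is exactly the claim. I do not expect a genuine obstacle here; the only point requiring mild care is the verification that $G$ sits as a \emph{closed} subgroup of $Aut(\Sigma_D)$ and that cocompactness is inherited, but both are standard consequences of passing to a $G$-invariant subcomplex of the barycentric subdivision.
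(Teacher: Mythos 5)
Your argument is exactly the paper's: contractibility of $\Sigma_D$ comes from \cite{DymaraJ}[Lemma 1.4], local finiteness of $\Sigma_D$ from the finiteness of the $0$-dimensional links, $G$ is identified with a closed subgroup of $Aut(\Sigma_D)$ acting cocompactly, and then Lemma \ref{locally finite simplicial complex cohomology lemma} applied to $\Sigma_D$ gives the conclusion. The paper states these steps even more tersely, so your proposal is correct and follows the same route, merely filling in more of the routine verifications.
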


Following Dymara and Januszkiewicz, we will use the above corollary to show vanishing of the group cohomology under additional assumptions on $\Sigma$ and on the action of $G$. In order to state our additional assumptions we recall the following conditions on the couple $(\Sigma, G)$ taken from \cite{DymaraJ}: 
\begin{enumerate}[label=($\mathcal{B}${{\arabic*}})]
\item All the $0$-dimensional links are finite.
\item All the links of dimension $\geq 1$ are gallery connected. 
\item All the links are either compact or contractable (including $\Sigma$ itself). 
\item $G$ acts transitively on chambers and $\Sigma \rightarrow \Sigma / G$ restricts to an isomorphism on every chamber. 
\end{enumerate}

Let $\Sigma$ be an infinite simplicial complex and $G < Aut (\Sigma)$ be a closed subgroup satisfying $(\mathcal{B}1)-(\mathcal{B} 4)$ and let $\pi$ a continuous representation of $G$ on a Banach space $X$. Fix a chamber $\triangle \in \Sigma (n)$ and for every $\eta \subseteq \triangle$, denote $G_\eta$ to be the subgroup of $G$ fixing $\sigma$ and also denote $X^{\pi (G_\eta)} = X_\eta$ to be the subspace of $X$ fixed by $G_\eta$ (under the action of $\pi$).  One of the key ideas in \cite{DymaraJ} is that one can deduce vanishing of cohomologies of $G$ with coefficients in $\pi$ given that there are projections on all the $X_\eta$'s and nice decompositions of these $X_\eta$'s with respect to these projections. To make this precise:

\begin{theorem}\cite{DymaraJ}[Theorems 5.2,7.1]
\label{general vanishing of cohomology based on decomposition}
Let $\Sigma$ be an infinite simplicial complex, $G < Aut (\Sigma)$ be a closed subgroup satisfying $(\mathcal{B}1)-(\mathcal{B} 4)$ and $\pi$ a continuous representation of $G$ on a Banach space $X$. Under the notations above, for every $\eta \subseteq \triangle$ denote $D_\eta$ to be the subcomplex of $\Sigma_D$ spanned by the barycenters of simplices of $\triangle$ that have compact links and do not contain $\eta$. 
 
Assume that for every $\eta \subseteq \triangle$ there is a projection $P_\eta :X \rightarrow X$ on $X_\eta$. For every $\eta \subseteq \triangle$, denote
$$X^\eta = \im (P_\eta) \cap \bigcap_{\tau \subsetneqq \eta} \Ker (P_\tau).$$
If for every $\eta \subseteq \triangle$, the following holds 
$$X_\eta = \bigoplus_{\tau \subseteq \eta} X^\tau,$$
then 
$$H^* (G, \pi) = \bigoplus_{\eta \subseteq \triangle} \widetilde{H}^{*-1} (D_\eta; X^{\eta}).$$ 
Moreover, if there is $l \geq 1$ such that all the $l$-dimensional links of $\Sigma$ are compact, then for every $i=1,...,l$, $H^i (G, \pi)=0$.
\end{theorem}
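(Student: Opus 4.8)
The plan is to follow Dymara and Januszkiewicz \cite{DymaraJ}: once the projections $P_\eta$ and the decomposition $X_\eta=\bigoplus_{\tau\subseteq\eta}X^\tau$ are available, the proof is essentially a combinatorial manipulation of an equivariant cochain complex. First I would reduce to $\Sigma_D$: conditions $(\mathcal{B}1)$--$(\mathcal{B}3)$ make $\Sigma_D$ contractible and locally finite, and $G$ acts on it cocompactly with a fundamental domain the finite complex $\triangle_D$, namely the subcomplex of $\Sigma_D$ spanned by the barycenters of the faces of $\triangle$ with compact links (so $\triangle_D\cong\Sigma_D/G$ and $D_\eta$ is the full subcomplex of $\triangle_D$ on the barycenters $b_\tau$ with $\eta\not\subseteq\tau$). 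By the corollary above, $H^*(G,\pi)=H^*(\Sigma_D,\pi)$, and the right-hand side is the cohomology of the equivariant simplicial cochain complex $C^*$ of $\Sigma_D$ with coefficients in $\pi$. Using $(\mathcal{B}4)$ one checks that the $G$-stabilizer of a simplex of $\Sigma_D$ lying over a chain $\tau_0\subsetneq\cdots\subsetneq\tau_k$ of faces of $\triangle$ fixes $\tau_k$ pointwise, hence equals $G_{\tau_k}$ and preserves the orientation of that simplex; thus $C^k$ is the product of the spaces $X_{\tau_k}$ over all such $k$-chains, with the alternating-sum differential in which the unique face map that lowers the top face, $(\tau_0\subsetneq\cdots\subsetneq\tau_k)\mapsto(\tau_0\subsetneq\cdots\subsetneq\tau_{k-1})$, is the natural inclusion $X_{\tau_{k-1}}\hookrightarrow X_{\tau_k}$ (well defined since $\tau_{k-1}\subseteq\tau_k$ forces $G_{\tau_k}\subseteq G_{\tau_{k-1}}$).

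Second, I would split $C^*$ using the hypothesis. Since each $X^\eta$ is defined intrinsically from the projections, the equalities $X_{\tau_k}=\bigoplus_{\eta\subseteq\tau_k}X^\eta$ are mutually compatible, and the inclusion $X_{\tau_{k-1}}\hookrightarrow X_{\tau_k}$ carries the $\eta$-summand identically onto the $\eta$-summand when $\eta\subseteq\tau_{k-1}$ and kills it otherwise, while the remaining face maps fix the top face and hence respect the decomposition trivially. Therefore $C^*=\bigoplus_{\eta\subseteq\triangle}C_\eta^*$, where $C_\eta^*$ consists of the $X^\eta$-valued cochains supported on the chains whose top face contains $\eta$, with the top-lowering face map present exactly when the new top face still contains $\eta$. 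The key observation is that $C_\eta^*$ is precisely the augmented relative cochain complex $C^*(\triangle_D,D_\eta;X^\eta)$: the simplices of $\triangle_D$ not contained in the full subcomplex $D_\eta$ are exactly the chains whose top contains $\eta$ (because the chain is nested), and a face of such a simplex lies in $D_\eta$ exactly when it is obtained by deleting the top barycenter and the resulting top no longer contains $\eta$.

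Third, since the chamber $\triangle$ has empty (hence compact) link, $b_\triangle$ is a cone point of $\triangle_D$, so $\triangle_D$ is contractible; the long exact sequence of the pair $(\triangle_D,D_\eta)$ in reduced cohomology then yields $H^*(C_\eta^*)=H^*(\triangle_D,D_\eta;X^\eta)\cong\widetilde H^{*-1}(D_\eta;X^\eta)$, and summing over $\eta$ gives $H^*(G,\pi)=\bigoplus_{\eta\subseteq\triangle}\widetilde H^{*-1}(D_\eta;X^\eta)$. For the last assertion, if all $l$-dimensional links of $\Sigma$ are compact then so are all links of smaller dimension (a link of a larger face embeds in a link of a smaller one), so every sufficiently high-dimensional face of $\triangle$ has compact link; the combinatorial analysis of the complexes $D_\eta$ in \cite{DymaraJ} then shows that each $D_\eta$ has trivial reduced cohomology in degrees $\le l-1$ with arbitrary coefficients, whence $\widetilde H^{i-1}(D_\eta;X^\eta)=0$ for $1\le i\le l$ and therefore $H^i(G,\pi)=0$ for $i=1,\dots,l$.

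The genuinely geometric input, and the only step that is not formal bookkeeping, is the last one: controlling the acyclicity range of the complexes $D_\eta$ in terms of the dimensions of the links of $\Sigma$, which is carried out in \cite{DymaraJ}. Everything else is the standard structure of the equivariant cochain complex together with the purely algebraic splitting forced by the decomposition hypothesis, so — as the paper remarks — no new idea beyond \cite{DymaraJ} is needed once the decomposition $X_\eta=\bigoplus_{\tau\subseteq\eta}X^\tau$ has been established for Banach coefficients.
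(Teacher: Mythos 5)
Your proposal is correct and follows exactly the route the paper intends: the paper does not reprove this statement but cites \cite{DymaraJ}[Theorems 5.2, 7.1] and remarks only that the argument depends solely on the decomposition $X_\eta=\bigoplus_{\tau\subseteq\eta}X^\tau$ and passes verbatim to continuous representations on Banach spaces. Your reconstruction of that argument --- identifying the equivariant cochain complex of $\Sigma_D$ over the Davis chamber, splitting it along the $X^\eta$-summands into the relative complexes of the pairs $(\triangle_D,D_\eta)$, and using contractibility of $\triangle_D$ together with the acyclicity range of the $D_\eta$ established in \cite{DymaraJ} --- is faithful to that proof.
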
 

\begin{remark}
In \cite{DymaraJ}[Theorem 7.1] the assumptions of the theorem do not include the decomposition $X_\eta = \bigoplus_{\tau \subseteq \eta} X^\tau$, but assumptions regrading the spectral gap in the $1$-dimensional links from which this decomposition is deduced. However, the proof of the theorem only relies on the above decomposition, therefore the theorem can be stated as above. Also, \cite{DymaraJ}[Theorems 5.2, 7.1] are stated for continuous unitary representations on Hilbert spaces, but the proof of \cite{DymaraJ}[Theorem 7.1] and the proof of \cite{DymaraJ}[Theorem 5.2] based on \cite{DymaraJ}[Theorem 7.1] pass verbatim to continuous representations on Banach spaces.
\end{remark}

We would like to add an additional condition on $\Sigma$ that will be denoted ($\mathcal{B}_{\delta, r}$) (replacing the condition ($\mathcal{B}_\delta$) appearing in \cite{DymaraJ}):
\begin{enumerate}[label=($\mathcal{B}_{\delta, r}$)]
\item For every $\eta \in \Sigma(n-2)$, the link of $\eta$, denoted $\Sigma_\eta$, is finite bipartite graph with sides $V_{\eta,1}, V_{\eta,2}$. For any $\eta \in \Sigma(n-2)$ denote $$V_{min} (\eta) = \min \lbrace \vert V_{\eta,1} \vert,\vert V_{\eta,2} \vert\rbrace,$$ 
and denote $\kappa (\eta)$ to be the smallest positive eigenvalue of the normalized Laplacian of $\Sigma_\eta$, then
$$(1-\kappa (\eta) ) \left( V_{min} (\eta) \right)^{\frac{1}{r}} \leq \delta.$$
\end{enumerate}

\begin{remark}
We note that if condition $(\mathcal{B} 4)$ is fulfilled and if the $1$-dimensional links of $\Sigma$ are finite, then every $1$-dimensional link has to be a bipartite graph.
\end{remark}

The main source of examples of $(\Sigma, G)$ fulfilling $(\mathcal{B}1)-(\mathcal{B} 4)$ and $(\mathcal{B}_{\delta, r})$ are groups coming from BN-pairs (a reader unfamilier with the definition of a BN-pair can find it in \cite{BuildingsBook}[Chapter 6]), when $G$ is the group and $\Sigma$ is the building on which it acts. In \cite{DymaraJ} the following is proved:
\begin{proposition}\cite{DymaraJ}[Propositions 1.6,1.7] 
Let $G$ be a group coming from a BN-pair and let $\Sigma$ be the building on which it acts. Assume further that $\Sigma$ is non compact and has finite thickness. Then conditions $(\mathcal{B}1)-(\mathcal{B} 4)$ are fulfilled for $(\Sigma, G)$ and $\Sigma_D$ is contractible.
\end{proposition}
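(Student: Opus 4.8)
The plan is to read off conditions $(\mathcal{B}1)$--$(\mathcal{B}4)$ from the standard dictionary between BN-pairs and buildings, and then to obtain the contractibility of $\Sigma_D$ from the Dymara--Januszkiewicz lemma on the core quoted above. The facts I will use about the building $\Sigma$ of type $(W,S)$ attached to the BN-pair $(B,N)$ are: its chambers are the cosets $gB$ and its simplices the cosets $gP_J$ of the standard parabolics ($J \subsetneq S$), with $G$ acting by left translation, transitively on chambers and type-preservingly; the stabilizer of the fundamental chamber $C_0$ is $B$; and every link of a simplex of $\Sigma$ is again a building (residues of buildings are buildings), with Weyl group a standard subgroup $W_J$, $J \subsetneq S$, and with the finite thickness inherited from $\Sigma$.

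Condition $(\mathcal{B}4)$ is then immediate: transitivity on chambers is transitivity of left translation on $G/B$, and since the action is type-preserving, $B=\Stab(C_0)$ fixes $C_0$ pointwise (the vertices of $C_0$ carry distinct types); by transitivity every chamber stabilizer fixes its chamber pointwise, so $\Sigma\to\Sigma/G$ is injective, hence a simplicial isomorphism, on each chamber. For $(\mathcal{B}1)$: a $0$-dimensional link is the link of a panel, i.e. the set of chambers containing that panel, which is finite exactly because $\Sigma$ has finite thickness. For $(\mathcal{B}2)$: every building is gallery connected, since any two chambers lie in a common apartment -- a Coxeter complex, in which galleries realise words in the generators -- so every link of dimension $\geq 1$, being a building, is gallery connected.

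For $(\mathcal{B}3)$, let $\Delta$ be an arbitrary link of $\Sigma$ (the case of $\Sigma$ itself being the link of $\emptyset$); it is a building whose Weyl group $W'$ is a standard parabolic subgroup of $W$ and which still has finite thickness. If $W'$ is finite, then $\Delta$ is spherical and, having finitely many chambers, is a finite complex, hence compact. If $W'$ is infinite, then $|\Delta|$ is contractible by the Solomon--Tits theorem for buildings. Since $\Sigma$ is non-compact it is not a finite complex, so $W$ is infinite and $\Sigma$ itself is contractible; this proves $(\mathcal{B}3)$. Finally, $\Sigma$ is infinite, all its links are compact or contractible by $(\mathcal{B}3)$, and its $0$-dimensional links are finite by $(\mathcal{B}1)$, so the quoted Dymara--Januszkiewicz lemma on the core (\cite{DymaraJ}[Lemma 1.4]) gives that $\Sigma_D$ is contractible.

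The single non-formal input is the contractibility of $|\Delta|$ when the Weyl group of $\Delta$ is infinite -- the point where the genuine topology of buildings enters. I would cite the Solomon--Tits theorem (equivalently, that such a building carries a complete CAT(0) metric), or else reprove it by a nerve argument over the apartments, using that each apartment is the contractible Coxeter complex of an infinite Coxeter group and that intersections of apartments are convex and hence contractible. All the remaining steps are routine bookkeeping with the parabolic/residue structure of the BN-pair.
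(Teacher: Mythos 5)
Your proof is correct and takes essentially the same route as the source: the paper gives no argument of its own for this proposition (it is quoted from Dymara--Januszkiewicz), and your derivation via the parabolic/residue dictionary for $(\mathcal{B}1)$, $(\mathcal{B}2)$, $(\mathcal{B}4)$, the dichotomy ``finite Weyl group $\Rightarrow$ finite, hence compact, residue'' versus ``infinite Weyl group $\Rightarrow$ contractible building'' for $(\mathcal{B}3)$, and the quoted core lemma for the contractibility of $\Sigma_D$ is exactly the standard argument behind \cite{DymaraJ}[Propositions 1.6, 1.7]. The only cosmetic point is that the contractibility of a building with infinite Weyl group is not literally the Solomon--Tits theorem (which concerns the spherical case), but you correctly isolate it as the single non-formal input and offer adequate references for it.
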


In order to check the condition $(\mathcal{B}_{\delta, r})$ in buildings, we recall that if a building $\Sigma$ has finite $1$-dimensional links, then these links are spherical building, i.e., they are thick generalized $m$-gons with $m=2,3,4,6,8$ (a reader unfamilier with generalized $m$-gons can find a good introduction in \cite{VanM}[Chapter 1]). 

\begin{proposition}
\label{condition B-delta,r for buildings}
Let $\Sigma$ be a building such that the $1$-dimensional links of $\Sigma$ are compact. Let $m'$ be the smallest integer such that all the links of $1$-dimensional links of $\Sigma$ are generalized $m$-gons with $m \leq m'$. Then for every 
$$r > 
\begin{cases}
4 & m' =3 \\
8 & m' =4 \\
18 & m' = 6 \\
20 & m' =8
\end{cases},$$ 
and every $\delta >0$, if the thickness of the building is large enough, then $( \mathcal{B}_{\delta, r} )$ holds for $\Sigma$.
\end{proposition}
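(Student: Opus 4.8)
The plan is to observe that $(\mathcal{B}_{\delta,r})$ is a statement purely about the finite $1$-dimensional links $\Sigma_\eta$ ($\eta\in\Sigma(n-2)$), each of which is a thick rank-$2$ residue of $\Sigma$, hence a thick generalized $m$-gon. By the Feit--Higman theorem $m\in\{2,3,4,6,8\}$, so $m'$ is well defined and it suffices to bound $(1-\kappa(\eta))(V_{min}(\eta))^{1/r}$ separately for each $m$ with $2\le m\le m'$. For a fixed $m$ I would describe $\Sigma_\eta$ by its order $(s,t)$ and translate the two quantities in $(\mathcal{B}_{\delta,r})$ into this language: $V_{min}(\eta)$ is the smaller of the number of points and the number of lines of $\Sigma_\eta$, and, since the incidence graph of $\Sigma_\eta$ is a connected bipartite graph which is biregular of degrees $s+1$ and $t+1$, the smallest positive eigenvalue of its normalized Laplacian is
$$\kappa(\eta)=1-\frac{\lambda_2}{\sqrt{(s+1)(t+1)}},$$
where $\lambda_2$ is the largest absolute value of a non-trivial eigenvalue of the ordinary adjacency matrix of the incidence graph.

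Next I would assemble the classical parameter and spectral data for generalized polygons. For $m=2$ the incidence graph is complete bipartite, so $\kappa(\eta)=1$ and the inequality in $(\mathcal{B}_{\delta,r})$ holds trivially. For $m=3$, $\Sigma_\eta$ is a projective plane of some order $q'$, with $q'^2+q'+1$ points and lines and $\lambda_2=\sqrt{q'}$. For $m\in\{4,6,8\}$ the number of points is $(s+1)$ times a factor symmetric in $s$ and $t$ (equal to $st+1$, $s^2t^2+st+1$, and $(st+1)(s^2t^2+1)$ for $m=4,6,8$ respectively, the number of lines being obtained by interchanging $s$ and $t$), and $\lambda_2$ equals $\sqrt{s+t}$, $\sqrt{s+t+\sqrt{st}}$, and $\sqrt{s+t+\sqrt{2st}}$ respectively. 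I would also record the extremal constraints on the orders: the thickness of $\Sigma$ being $q+1$ forces $\min(s,t)\ge q$, while Higman's inequality (for $m=4,8$) and the Haemers--Roos inequality (for $m=6$) give $\max(s,t)\le\min(s,t)^2$ when $m\in\{4,8\}$ and $\max(s,t)\le\min(s,t)^3$ when $m=6$; and $s=t=q'$ when $m=3$.

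Writing $u:=\min(s,t)\ge q$ and using $s\le t$ to bound $\lambda_2^2$ by an absolute constant multiple of $t$, these inputs give, after routine estimates, $1-\kappa(\eta)\le C_m\,u^{-1/2}$, and, substituting the bound on $\max(s,t)$ into the counting formulas, $V_{min}(\eta)\le C_m'\,u^{e_m}$ with
$$e_3=2,\qquad e_4=4,\qquad e_6=9,\qquad e_8=10 .$$
Hence $(1-\kappa(\eta))(V_{min}(\eta))^{1/r}\le C_m''\,u^{-\frac12+\frac{e_m}{r}}\le C_m''\,q^{-\frac12+\frac{e_m}{r}}$, where $C_m,C_m',C_m''$ depend only on $m$. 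Since $2e_3=4$, $2e_4=8$, $2e_6=18$ and $2e_8=20$, the hypothesis $r>2e_{m'}\ge 2e_m$ makes the exponent strictly negative for every $m\le m'$, so the right-hand side tends to $0$ as $q\to\infty$; therefore, once the thickness $q+1$ of $\Sigma$ is large enough (in terms of $m'$, $r$ and $\delta$ only) we get $(1-\kappa(\eta))(V_{min}(\eta))^{1/r}\le\delta$ for every $\eta\in\Sigma(n-2)$, which is exactly $(\mathcal{B}_{\delta,r})$.

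I expect the only genuine work to be locating and correctly quoting the spectra of incidence graphs of thick generalized polygons together with the sharp inequalities between $s$ and $t$, plus the elementary point that for a biregular bipartite graph the spectral gap of the normalized Laplacian is governed by $\lambda_2/\sqrt{(s+1)(t+1)}$ rather than by the two degrees separately. Once these are in place the argument is bookkeeping with powers of $u=\min(s,t)$, and the match between the exponents $2e_{m'}$ and the numbers $4,8,18,20$ in the statement is immediate; the main things to be careful about are which of the ``square'' and ``cube'' inequalities applies in each case and the (trivial) case $m'=2$.
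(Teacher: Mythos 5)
Your proposal is correct and follows essentially the same route as the paper: a case split over $m\in\{2,3,4,6,8\}$, the Feit--Higman spectral data giving $1-\kappa(\eta)=O(\min(s,t)^{-1/2})$, the point/line counts combined with the Higman and Haemers--Roos inequalities giving $V_{min}(\eta)=O(\min(s,t)^{e_m})$ with $e_3=2,e_4=4,e_6=9,e_8=10$, and the conclusion from $r>2e_{m'}$. The only cosmetic difference is that you spell out the exact second eigenvalues and the normalized-Laplacian reduction for biregular bipartite graphs, where the paper just quotes the asymptotics of $1-\kappa$.
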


\begin{proof}
Let $(V,E)$ be a generalized $m$-gon of order $(s,t)$ and assume without loss of generality that $s \geq t$. Denote $\kappa$ to be the smallest positive eigenvalue of the normalized Laplacian on $(V,E)$. If $m=2$, then $1-\kappa=0$ and therefore this case is of no interest to us.

For $m >2$ the spectral gap $\kappa$ was explicitly computed by Feit and Higman \cite{FeitHig} for all generalized $m$-gons (the reader can find a summation of these results in \cite{BallmannS}[Section 3]). 
We will not recall the exact values of $\kappa$ depending on $(s,t)$, but only the asymptotic behaviour of $1-\kappa$ as $s$ and $t$ tends to $\infty$:
$$1- \kappa \sim \begin{cases}
O(\frac{1}{\sqrt{t}}) = O(\frac{1}{\sqrt{s}}) & m =3 \\
O(\frac{1}{\sqrt{t}} + \frac{1}{\sqrt{s}}) & m=4,6,8
\end{cases}.$$

We recall that generalized $m$-gons are always bipartite graphs. Denote $V_1, V_2$ to be the vertices in the two sides of $(V,E)$ and denote
$$V_\min = \min \lbrace \vert V_1 \vert, \vert V_2 \vert \rbrace.$$ 
The exact value of $\vert V_\min \vert$ depending on $(s,t)$ is computed in \cite{VanM}[Corollary 1.5.5] (recall we assumed that $s \geq t$):
$$V_\min = 
\begin{cases}
t^2 + t +1 & m=3 \\
(st+1)(t+1) & m =4\\
((st)^2 + st +1))(t+1) & m =6\\
((st)^2+1)(st+1))(t+1) & m =8\\
\end{cases}.$$

In order to complete the proof, we will also need the following connections between $s$ and $t$ (see \cite{VanM}[Theorem 1.7.2]):
$$\begin{cases}
s=t & m=3 \\
t^{\frac{1}{2}} \leq s \leq t^2 & m=4,8 \\
t^{\frac{1}{3}} \leq s \leq t^3 & m=6
\end{cases}.$$
To conclude the proof, we combine all of the above in order to show that for $r$ as above, $(1-\kappa)\vert V_\min \vert^{\frac{1}{r}}$ tends to $0$ as $t$ tends to infinity.

\begin{dmath*}
(1-\kappa)\vert V_\min \vert^{\frac{1}{r}} \sim \begin{cases}
(t^2+t+1)^{\frac{1}{r}} \frac{1}{\sqrt{t}} & m =3 \\
(st+1)^{\frac{1}{r}} (t+1)^{\frac{1}{r}} (\frac{1}{\sqrt{t}} + \frac{1}{\sqrt{s}}) & m =4 \\
((st)^2 + st +1))^{\frac{1}{r}} (t+1)^{\frac{1}{r}} (\frac{1}{\sqrt{t}} + \frac{1}{\sqrt{s}}) & m=6 \\
((st)^2+1)(st+1))^{\frac{1}{r}} (t+1)^{\frac{1}{r}} (\frac{1}{\sqrt{t}} + \frac{1}{\sqrt{s}})  & m =8
\end{cases} \leq \\
\begin{cases}
(t^2+t+1)^{\frac{1}{r}} \frac{1}{\sqrt{t}} & m =3 \\
(t^2 t+1)^{\frac{1}{r}} (t+1)^{\frac{1}{r}} (\frac{1}{\sqrt{t}} + \frac{1}{\sqrt{t}}) & m =4 \\
((t^3 t)^2 + t^3 t +1))^{\frac{1}{r}} (t+1)^{\frac{1}{r}} (\frac{1}{\sqrt{t}} + \frac{1}{\sqrt{t}}) & m=6 \\
((t^2 t)^2+1)(t^2 t+1))^{\frac{1}{r}} (t+1)^{\frac{1}{r}} (\frac{1}{\sqrt{t}} + \frac{1}{\sqrt{t}})  & m =8
\end{cases} \leq
\begin{cases}
3^{\frac{1}{r}} t^{\frac{2}{r}} \frac{1}{\sqrt{t}} & m =3 \\
4^{\frac{1}{r}} t^{\frac{4}{r}} \frac{2}{\sqrt{t}} & m =4 \\ 
6^{\frac{1}{r}} t^{\frac{9}{r}} \frac{2}{\sqrt{t}} & m=6 \\
8^{\frac{1}{r}} t^{\frac{10}{r}} \frac{2}{\sqrt{t}} & m=8 
\end{cases},
\end{dmath*}
and the conclusion follows.
\end{proof}

\subsection{Averaged projections in a Banach space}  
Let $X$ be a Banach space. Recall that a projection $P$ is a bounded operator $P \in \mathcal{B} (X)$ such $P^2 =P$.  Note that $\Vert P \Vert \geq 1$ if $P \neq 0$. For subspaces $M, N$ of $X$, we'll say that $P$ is a projection on $M$ along $N$ if $P$ is a projection such that $\im (P) = M$, $\Ker(P)=N$. 

Given a family of projections $P_1,...,P_N$ on  $M_1,...,M_N$ in $X$, there is a well known algorithm of finding a projection on $\cap_{j=1}^N M_j$, which is known as the method of averaged projections. The idea is to define the operator $T= \frac{P_1 +...+P_N}{N}$ and to take a limit $T^i$ as $i$ goes to infinity. The reader should note that in general $T^i$ need not converge in the operator norm. In \cite{ORobust}, the author had established a criterion for the convergence of $T^i$ using the idea of an angle between projections.

\begin{definition}[Angle between projections]
\label{angle between projections definition}
Let $X$ be a Banach space and let $P_1, P_2$ be projections on $M_1,M_2$ respectively. Assume that there is a projection $P_{1,2}$ on $M_1 \cap M_2$ such that $P_{1,2} P_1 = P_{1,2}$ and $P_{1,2} P_2 = P_{1,2}$ and define 
$$\cos (\angle (P_1,P_2)) = \max \left\lbrace \Vert P_1 (P_2 - P_{1,2} ) \Vert, \Vert P_2 (P_1 - P_{1,2} ) \Vert  \right\rbrace.$$ 
\end{definition}

\begin{remark}
In the above definition, we are actually defining the ``cosine'' of the angle. This is a little misleading, because we do not know if $\cos (\angle (P_1,P_2)) \leq 1$ holds in general (although this inequality holds in all the examples we can compute or bound). 
\end{remark}

\begin{remark}
We note that in the case where $X$ is a Hilbert space and $P_1,P_2$ are orthogonal projections on $M_1,M_2$, the orthogonal projection $P_{1,2}$ on  $M_1 \cap M_2$ will always fulfill $P_{1,2} P_1 = P_{1,2}$ and $P_{1,2} P_2 = P_{1,2}$. Also, in this case, $\cos (\angle (P_1,P_2))$ will be equal to the Friedrichs angle between $M_1$ and $M_2$ defined as
$$\cos (\angle (M_1,M_2))= \sup \lbrace \vert \langle u,v \rangle \vert : \Vert u \Vert \leq 1, \Vert v \Vert \leq 1, u \in M_1 \cap (M_1 \cap M_2)^\perp, v \in M_2 \rbrace.$$
\end{remark} 

Next, we recall the following theorems from \cite{ORobust}:
\begin{theorem} \cite{ORobust}[Theorem 3.12]
\label{Quick uniform convergence criterion}
Let $X$ be a Banach space and let $P_1,...,P_N$ be projections in $X$ ($N \geq 2$). Assume that for every $1 \leq j_1 < j_2 \leq N$, there is a projection $P_{j_1,j_2}$ on $\im (P_{j_1}) \cap \im (P_{j_2})$, such that $P_{j_1,j_2} P_{j_1} = P_{j_1,j_2}, P_{j_1,j_2} P_{j_2} = P_{j_1,j_2}$. 

Denote $T=\frac{P_1+...+P_N}{N}$ and assume there are constants 
$$\gamma < \frac{1}{8N-11} \text{ and } \beta < 1+ \frac{1-(8N-11)\gamma}{N-2 + (3N-4)\gamma},$$
such that  
$$ \max \lbrace \Vert P_1 \Vert,..., \Vert P_N \Vert \rbrace \leq \beta \text{ and } \max \lbrace  \cos(\angle (P_{j_1},P_{j_2})) : 1 \leq j_1 < j_2 \leq N \rbrace \leq \gamma.$$
Then for 
$$r = \dfrac{1+(N-2)\beta}{N}+(4-\dfrac{6}{N})\dfrac{1+\beta}{1-\gamma} \gamma,$$
$$C = \dfrac{(2N-2)\beta^2}{N (1-r)},$$
we have that $r <1$ and there is an operator $T^\infty$, such that $\Vert T^\infty - T^i  \Vert \leq C r^{i-1}$. Moreover, $T^\infty$ is a projection on $\bigcap_{j=1}^N \im (P_j)$.
\end{theorem}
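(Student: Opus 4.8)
The plan is to show that the powers $T^i$ form a Cauchy sequence in $\mathcal{B}(X)$ with a geometric rate, to identify the limit with a projection onto $M := \bigcap_{j=1}^N \im(P_j)$, and to read off the quantitative bounds. Writing $a_i = \Vert T^{i+1} - T^i \Vert$ and using the identity $T^{i+1} - T^i = T(T^i - T^{i-1})$, everything reduces to a single one-step estimate, the contraction inequality $a_i \le r\, a_{i-1}$ (the substantive point, discussed below), combined with an elementary bound on the first increment: $T^2 - T = \frac{1}{N^2}\sum_{j \ne k} P_j(P_k - I)$ (the diagonal terms vanish since $P_k^2 = P_k$), so, estimating the $N(N-1)$ summands by $\Vert P_j \Vert (1 + \Vert P_k \Vert) \le 2\beta^2$ (using $\beta \ge 1$), we get $a_1 = \Vert T^2 - T \Vert \le \frac{2N-2}{N}\beta^2$. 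Granting the contraction inequality, $a_i \le r^{i-1} a_1$, hence $(T^i)$ is Cauchy; denoting its limit by $T^\infty$, we obtain $\Vert T^\infty - T^i \Vert \le \sum_{k \ge i} a_k \le \frac{a_1}{1-r}\, r^{i-1} = C\, r^{i-1}$, which is the asserted estimate.

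Next I would check that $T^\infty$ is a projection onto $M$. Idempotency and the intertwining relations are formal consequences of $T^\infty = \lim_i T^i$: one has $(T^\infty)^2 = \lim_i T^{2i} = T^\infty$ and $T T^\infty = T^\infty T = \lim_i T^{i+1} = T^\infty$. If $x \in M$ then $P_j x = x$ for all $j$, so $Tx = x$, so $T^\infty x = x$; thus $M \subseteq \im(T^\infty)$. For the reverse inclusion one must show that $T^\infty v = v$ forces $P_j v = v$ for every $j$ — not automatic here, since the $P_j$ need not be orthogonal projections — and I would deduce it from $\sum_{j}(I - P_j)T^\infty = N(I - T)T^\infty = 0$ via an estimate of the same flavour as the contraction inequality (again using the pairwise projections $P_{j,k}$ together with the bounds on $\beta$ and $\gamma$) showing that the operators $(I - P_j)T^\infty$ cannot cancel unless each of them vanishes. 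This gives $P_j T^\infty = T^\infty$, hence $\im(T^\infty) \subseteq \im(P_j)$ for all $j$, hence $\im(T^\infty) = M$. In particular no separate construction of a projection onto $M$ is needed: it is produced here as $T^\infty$.

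The heart of the matter, and the step I expect to be the main obstacle, is the contraction inequality $\Vert T(T^i - T^{i-1})\Vert \le r \Vert T^i - T^{i-1}\Vert$. The trivial bound $\Vert T \Vert \le \beta$ is useless, since $\beta$ may exceed $1$; instead one exploits that the increment $T^i - T^{i-1} = (T-I)T^{i-1}$ is, after one further application of $T$, concentrated in a part of $X$ on which $T$ genuinely contracts. The plan is to expand $T(T - I)$ in terms of the $P_j$ and to split each product $P_k P_j$ with $k \ne j$ as $P_{k,j} + (P_k P_j - P_{k,j})$. The contribution of the ``ideal'' pieces $P_{k,j}$ — those one would see if all the angles were zero, i.e. $P_k P_j = P_{k,j}$ — must be controlled not termwise (which would destroy the cancellation that makes the averaged-projection method converge) but after recombination, and yields the leading term $\frac{1 + (N-2)\beta}{N}$ of $r$; every remaining term carries a factor $\Vert P_k P_j - P_{k,j}\Vert \le \gamma$, and counting these and estimating the rest by $\beta$ gives the correction $(4 - \frac{6}{N})\frac{1+\beta}{1-\gamma}\gamma$, so that in total $\Vert T(T^i - T^{i-1})\Vert \le r \Vert T^i - T^{i-1}\Vert$ with $r$ as in the statement. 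It then remains to verify $r < 1$: a short computation shows that, given $\gamma < 1$, the stated bound on $\beta$ is algebraically equivalent to $r < 1$, while the condition $\gamma < \frac{1}{8N-11}$ is exactly what makes that range of $\beta$ consistent with the standing inequality $\beta \ge 1$ (equivalently, what allows $\beta = 1$); thus the numerical hypotheses of the theorem are precisely calibrated to the conclusion.
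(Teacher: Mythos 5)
First, a point of reference: the paper does not prove this statement — it is imported verbatim from \cite{ORobust}[Theorem 3.12] — so there is no internal proof to compare against. Judged on its own terms, your proposal gets all the bookkeeping right: the identity $T^2-T=\frac{1}{N^2}\sum_{j\ne k}P_j(P_k-I)$ and the resulting bound $a_1\le\frac{2N-2}{N}\beta^2$, the geometric summation giving $C=\frac{a_1}{1-r}$, and the check that the stated hypothesis on $\beta$ is algebraically equivalent to $r<1$ (with $\gamma<\frac{1}{8N-11}$ making that range compatible with $\beta\ge 1$) are all correct and would survive in any complete proof. But the two assertions carrying the entire analytic content are left as plans rather than arguments.

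The main gap is the contraction inequality $\Vert T(T^i-T^{i-1})\Vert\le r\Vert T^i-T^{i-1}\Vert$. You correctly flag it as the obstacle, but the step you describe — recombining the ``ideal'' pieces $P_{k,j}$ so that their contribution is $\frac{1+(N-2)\beta}{N}$ rather than the useless termwise bound — \emph{is} the theorem, and no mechanism for that recombination is given. Moreover, the shape of $r$ is evidence against a memoryless one-step contraction on the increments: the factor $\frac{1+\beta}{1-\gamma}\gamma$, with its $\frac{1}{1-\gamma}=\sum_m\gamma^m$ denominator, indicates that error terms accumulate over the whole history of the iteration, i.e.\ the true recursion has the form $a_{i+1}\le\frac{1+(N-2)\beta}{N}a_i+O(\gamma)\cdot(\text{weighted sums of earlier data})$ and is typically run jointly with an auxiliary sequence such as $\max_j\Vert(I-P_j)T^i\Vert$; forcing the stated $r$ into a bound $a_i\le r\,a_{i-1}$ may fail or require a worse constant. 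The second gap is the inclusion $\im(T^\infty)\subseteq\bigcap_j\im(P_j)$: deducing from $\sum_j(I-P_j)T^\infty=0$ that each summand vanishes is not routine (applying $P_k$ kills only the $j=k$ term and leaves $\sum_{j\ne k}(P_k-P_kP_j)T^\infty$, which must again be controlled through the angles and the pairwise projections), and in arguments of this type one usually obtains $\Vert(I-P_j)T^i\Vert\to 0$ as part of the main recursion rather than after the fact. Both points need to be carried out before this counts as a proof.
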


To avoid carrying messy constants, we note the following:
\begin{corollary}
\label{Quick uniform convergence corollary}
In the notations of the above theorem, there are $\gamma_0 >0$ and $\beta_0 >1$ such that if 
$$ \max \lbrace \Vert P_1 \Vert,..., \Vert P_N \Vert \rbrace \leq \beta_0 \text{ and } \max \lbrace  \cos(\angle (P_{j_1},P_{j_2})) : 1 \leq j_1 < j_2 \leq N \rbrace \leq \gamma_0,$$
then $\Vert T^\infty - T^i \Vert \leq (4N) \left( \frac{2N-1}{2N} \right)^{i-1}$.
\end{corollary}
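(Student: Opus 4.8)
The plan is to deduce Corollary~\ref{Quick uniform convergence corollary} from Theorem~\ref{Quick uniform convergence criterion} by choosing $\gamma_0>0$ and $\beta_0>1$ small enough --- that is, $\gamma_0$ close to $0$ and $\beta_0$ close to $1$ --- that the constants $r=r(\beta_0,\gamma_0)$ and $C=C(\beta_0,\gamma_0)$ furnished by that theorem satisfy $r\le\frac{2N-1}{2N}$ and $C\le 4N$. Once this is arranged the corollary is immediate: in the notation of Theorem~\ref{Quick uniform convergence criterion}, if $\max_j\|P_j\|\le\beta_0$ and $\max_{j_1<j_2}\cos(\angle(P_{j_1},P_{j_2}))\le\gamma_0$ then its hypotheses hold with $\beta=\beta_0$ and $\gamma=\gamma_0$, so $\|T^\infty-T^i\|\le Cr^{i-1}$, and since $0<r\le\frac{2N-1}{2N}<1$ and $i-1\ge 0$ this is at most $4N\left(\frac{2N-1}{2N}\right)^{i-1}$.

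Thus everything reduces to producing such $\gamma_0,\beta_0$, and here I would argue by continuity. As $(\beta,\gamma)\to(1,0)$, the expression for $r$ in Theorem~\ref{Quick uniform convergence criterion} tends to $\frac{N-1}{N}$, and since $\frac{N-1}{N}=\frac{2N-2}{2N}<\frac{2N-1}{2N}$ there is a fixed amount of room below the target. Correspondingly $1-r\to\frac1N$, so $C=\frac{(2N-2)\beta^2}{N(1-r)}$ tends to $2N-2$, which is again strictly less than $4N$. Both $r$ and $C$ are continuous on a neighbourhood of $(1,0)$ in the region $\beta\ge 1$, $0\le\gamma<1$ (the denominators $1-\gamma$ and $1-r$ stay away from $0$ there), and both are non-decreasing in each of $\beta$ and $\gamma$ on that region, so it suffices to control them at a single corner $(\beta_0,\gamma_0)$. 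Finally, the two side conditions in Theorem~\ref{Quick uniform convergence criterion} --- namely $\gamma<\frac{1}{8N-11}$ and $\beta<1+\frac{1-(8N-11)\gamma}{N-2+(3N-4)\gamma}$ --- are open and hold on a neighbourhood of $(1,0)$ within $\{\beta\ge 1\}$, because the right-hand side of the second tends to $1+\frac{1}{N-2}>1$ as $\gamma\to 0^+$ when $N\ge 3$, and to $+\infty$ when $N=2$. Combining these observations, one can fix $\gamma_0>0$ and $\beta_0>1$ small enough that the pair $(\beta_0,\gamma_0)$ (hence every admissible pair dominated by it) satisfies both side conditions and also gives $r\le\frac{2N-1}{2N}$ and $C\le 4N$. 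If one prefers explicit values, it should be routine to check that, for instance, $\gamma_0=\beta_0-1=\frac{1}{100N^2}$ works.

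I do not expect a genuine obstacle; the one point that must be handled with a little care is that $C$ depends on $r$ through the factor $\frac{1}{1-r}$. Hence it would not do to merely push $r$ up to the allowed ceiling $\frac{2N-1}{2N}$: one must instead keep $r$ near its limiting value $\frac{N-1}{N}$, which is bounded away from $1$, so that the resulting estimate $C\le\frac{(2N-2)\beta_0^2}{N(1-r)}$ stays below $4N$. This coupling is the reason the conditions on $\gamma_0$ and $\beta_0$ have to be chosen together rather than separately.
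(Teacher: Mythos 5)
Your proposal is correct and follows essentially the same route as the paper: both deduce the corollary from Theorem \ref{Quick uniform convergence criterion} by observing that as $\gamma\to 0$ and $\beta\to 1$ the constant $r$ tends to $\frac{N-1}{N}<\frac{2N-1}{2N}$, so that $1-r\geq\frac{1}{2N}$ forces $C\leq (4N-4)\beta_0^2\leq 4N$ for $\beta_0$ close enough to $1$. Your additional remarks on monotonicity and on the openness of the side conditions $\gamma<\frac{1}{8N-11}$ and $\beta<1+\frac{1-(8N-11)\gamma}{N-2+(3N-4)\gamma}$ only make explicit what the paper leaves implicit.
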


\begin{proof}
Note that in Theorem \ref{Quick uniform convergence criterion} above, $r$ tends to $\frac{1+(N-2)\beta}{N}$ as $\gamma$ tends to $0$. Therefore, we can choose $\beta_0 > 1$ and $\gamma_0$ small enough such that $r \leq \frac{2N-1}{2N}$. Also note that for such $r$, we have that $C=\frac{(2N-2)\beta_0^2}{N \frac{1}{2N}} = (4N-4) \beta_0^2$. Therefore, we can choose $\beta_0 >1$ small enough such that $C \leq 4N$.  
\end{proof}

Last, we note that $T^i$ converges to a ``canonical'' projection with respect to $P_1,...,P_N$ if such projection exists.

\begin{proposition}
\label{canonical proposition}
Let $X$ be a Banach space and let $P_1,...,P_N$ be projections in $X$ ($N \geq 2$). Denote $T=\frac{P_1+...+P_N}{N}$ and assume that $T^i$ converges in the operator norm to $T^\infty$ which is a projection on $\bigcap_{j=1}^N \im (P_j)$. If there is a projection $P_{1,2,...,N}$ on $\bigcap_{j=1}^N \im (P_j)$ such that for every $i$, $P_{1,2,...,N} P_j = P_{1,2,...,N}$, then $T^\infty = P_{1,2,...,N}$.
\end{proposition}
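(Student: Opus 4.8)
Write $Q = P_{1,2,\dots,N}$ for brevity, and set $M = \bigcap_{j=1}^N \im(P_j)$, so that by hypothesis $\im(Q) = M = \im(T^\infty)$ and both $Q$ and $T^\infty$ are idempotent. The plan is to show the two identities $Q T^\infty = Q$ and $Q T^\infty = T^\infty$, which together force $T^\infty = Q$.

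First I would establish $Q T = Q$. Since $Q P_j = Q$ for every $j$, averaging gives
$$Q T = \frac{1}{N} \sum_{j=1}^N Q P_j = \frac{1}{N} \sum_{j=1}^N Q = Q.$$
An immediate induction then yields $Q T^i = Q$ for every $i \geq 1$: indeed $Q T^{i+1} = (Q T^i) T = Q T = Q$. Because $Q$ is a fixed bounded operator and $T^i \to T^\infty$ in operator norm, left multiplication by $Q$ is continuous in this topology, so passing to the limit in $Q T^i = Q$ gives $Q T^\infty = Q$.

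Next I would establish $Q T^\infty = T^\infty$. The key observation is that $Q$, being a projection with image $M$, restricts to the identity on $M$: if $y \in M = \im(Q)$, then $y = Qz$ for some $z$, whence $Q y = Q^2 z = Q z = y$. Since $\im(T^\infty) = M$, for every $x \in X$ the vector $T^\infty x$ lies in $M$, and therefore $Q(T^\infty x) = T^\infty x$; that is, $Q T^\infty = T^\infty$. Combining the two identities gives $T^\infty = Q T^\infty = Q$, as desired.

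\textbf{Main obstacle.} There is essentially no serious obstacle here; the argument is a short algebraic manipulation. The only point requiring a word of care is the passage to the limit in $Q T^i = Q$, which is legitimate precisely because $T^i \to T^\infty$ in the operator norm (not merely in some weaker topology) and $Q$ is bounded; this is exactly the setting guaranteed by the hypothesis.
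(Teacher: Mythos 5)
Your proof is correct and is essentially the paper's own argument written out in more detail: the paper also notes that $P_{1,\dots,N} T^i = P_{1,\dots,N}$ for every $i$ and then concludes $T^\infty = P_{1,\dots,N} T^\infty = P_{1,\dots,N}$, where the first equality uses exactly your observation that $P_{1,\dots,N}$ acts as the identity on $\im(T^\infty) = \bigcap_j \im(P_j)$. Your version just makes the norm-limit step and the restriction-to-the-image step explicit.
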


\begin{proof}
Note that for every $i$, we have that $P_{1,...,N} T^i = P_{1,...,N}$ and therefore $T^\infty = P_{1,...,N} T^\infty = P_{1,...,N}$.
\end{proof}

\subsection{Type and cotype}
Let $X$ be a Banach space. For $1< p_1 \leq 2$, $X$ is said to have (Gaussian) type $p_1$, if there is a constant $T_{p_1}$, such that for $g_1,...,g_n$ independent standard Gaussian random variables on a probability space $(\Omega, P)$, we have that for every $x_1,...,x_n \in X$ the following holds:
$$\left( \int_0^1 \left\Vert \sum_{i=1} g_i (\omega) x_i  \right\Vert^2 dP \right)^{\frac{1}{2}} \leq T_{p_1} \left( \sum_{i=1}^n \Vert x_i \Vert^{p_1} \right)^{\frac{1}{p_1}}.$$
The minimal constant $T_{p_1}$ such that this inequality is fulfilled is denoted $T_{p_1} (X)$. 

For $2 \leq p_2 < \infty$, $X$ is said to have (Gaussian) cotype $p_2$, if there is a constant $C_{p_2}$, such that for $g_1,...,g_n$ independent standard Gaussian random variables on a probability space $(\Omega, P)$, we have that for every $x_1,...,x_n \in X$ the following holds:
$$C_{p_2} \left( \sum_{i=1}^n \Vert x_i \Vert^{p_2} \right)^{\frac{1}{p_2}} \leq \left( \int_0^1 \left\Vert \sum_{i=1} g_i (\omega) x_i  \right\Vert^2 dP \right)^{\frac{1}{2}}  .$$
The minimal constant $C_{p_2}$ such that this inequality is fulfilled is denoted $C_{p_2} (X)$. 

We recall the following fact mentioned in the introduction regarding Banach spaces with given type and cotype which is due to Tomczak-Jaegermann \cite{Tomczak-Jaegermann}[Theorem 2 and the corollary after it]: if $X$ is a Banach space of type $p_1$, cotype $p_2$ and corresponding constants $T_{p_1} (X)$, $C_{p_2} (X)$ as above, then $d_k (X) \leq 4 T_{p_1} (X) C_{p_2} (X) k^{\frac{1}{p_1} - \frac{1}{p_2}}$. 

\begin{remark}
We remark that the Gaussian type and cotype defined above are equivalent to the usual (Rademacher) type and cotype (see \cite{HistoryBanach}[pages 311-312] and reference therein). 
\end{remark}

\begin{remark}
In \cite{PisierXu}, Pisier and Xu showed that for any $p_2 >2$ one can construct a non superreflexive Banach space $X$ with type $2$ and cotype $p_2$.
\end{remark}

\subsection{Vector valued $L^2$ spaces} 
\label{Vector valued spaces section}
Given a measure space $(\Omega, \mu)$ and Banach space $X$, a function $s : \Omega \rightarrow X$ is called simple if it is of the form:
$$s(\omega) = \sum_{i=1}^n \chi_{E_i} (\omega) v_i,$$
where $\lbrace E_1,...,E_n \rbrace$ is a partition of $\Omega$ where each $E_i$ is a measurable set, $\chi_{E_i}$ is the indicator function on $E_i$ and $v_i \in X$. 

A function $f : \Omega \rightarrow X$ is called Bochner measurable if it is almost everywhere the limit of simple functions. Denote $L^2 (\Omega ; X)$ to be the space of Bochner measurable functions such that 
$$\forall f \in L^2 (\Omega ; X), \Vert f \Vert_{L^2 (\Omega ; X)} = \left( \int_\Omega \Vert f (\omega) \Vert^2_X d \mu (\omega) \right)^{\frac{1}{2}} < \infty.$$ 

Given an operator $T \in B(L^2 (\Omega, \mu))$, we can define $T \otimes id_X \in B(L^2 (\Omega ; X))$ by defining it first on simple functions. 

For our uses, it will be important to bound the norm of an operator of the form $T \otimes id_X$ given that $X$ is derived by one of the deformation procedures given in the introduction. 

We will start by bounding the norm of $T \otimes id_X$ given that $X$ has ``round'' enough finite dimensional subspaces. For this, following \cite{LaatSalle2}, we introduce the following notation: for a Banach space $X$ and a constant $k \in \mathbb{N}$ denote 
$$e_k (X) = \sup \lbrace \Vert T \otimes id_X \Vert_{L^2 (\Omega ; X)} : T \text{ is of rank } k \text{ with } \Vert T \Vert_{\ell_2} \leq 1 \rbrace.$$

By a theorem of Pisier (see \cite{LaatSalle2}[Theorem 5.2]), this constant is connected to the constant $d_k (X)$ defined in the introduction by the inequality $e_k (X) \leq 2 d_k (X)$   
(there is also a reverse inequality $d_k (X) \leq e_k (X)$ which we will not use). Next, we recall the following definition:

\begin{definition}
For a Hilbert space $H$ and a bounded operator $T \in B(H)$ and a constant $r \in [1,\infty]$, the $r$-th Schatten norm is defined as 
$$r < \infty, \Vert T \Vert_{S^r} = \left( \sum_{i=1}^\infty (s_i (T))^r \right)^{\frac{1}{r}},$$
$$\Vert T \Vert_{S^\infty} =  s_1 (T),$$ 
where $s_1 (T) \geq s_2 (T) \geq ...$ are the eigenvalues of $\sqrt{T^* T}$. An operator $T$ is said to be of Schatten class $r$ if $\Vert T \Vert_{S^r} < \infty$. 
\end{definition}

In \cite{Salle} the following connection was between $e_k (X)$ and the norm of $T \otimes id_X$: 
\begin{lemma} \cite{Salle}[Proposition 3.3]
Let $r \in [2, \infty)$, $r >r' \geq 2$ be constants and assume there is a constant $C'$ such that $e_k (X) \leq C' k^{\frac{1}{r}}$ for every $k$. Denote
$$M = \sum_{i=1}^\infty 2^{\frac{r'}{r'-1} (\frac{1}{r} - \frac{1}{r'} )i}  .$$
If $(\Omega, \mu)$ is a measure space and  $T \in B(L^2 (\Omega,\mu))$ is of Schatten class $r'$, then
$$\Vert T \otimes id_X \Vert_{B(L^2(\Omega ; X))} \leq M C' \Vert T \Vert_{S^{r'}}.$$ 
\end{lemma}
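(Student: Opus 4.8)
The plan is to reduce the estimate to the hypothesis on finite-rank operators via a spectral decomposition of $T$. Since $r' < \infty$, every $T \in S^{r'}$ is compact, so it has a singular value decomposition $Tx = \sum_{n\ge 1} s_n \langle x, f_n\rangle g_n$ with orthonormal systems $(f_n),(g_n)$ in $L^2(\Omega,\mu)$ and $s_1 \ge s_2 \ge \cdots \ge 0$. Two elementary inputs will be used. First, a reformulation of the hypothesis: since $e_k(X)$ is nondecreasing in $k$ (being a supremum over an increasing family), any bounded operator $S$ on $L^2(\Omega,\mu)$ of rank at most $k$ satisfies
$$\Vert S \otimes id_X \Vert_{B(L^2(\Omega;X))} \le e_k (X)\, \Vert S \Vert_{S^\infty} \le C' k^{\frac1r} \Vert S \Vert_{S^\infty}.$$
Second, the standard decay estimate $s_n \le n^{-\frac1{r'}} \Vert T \Vert_{S^{r'}}$, which is immediate from $n\, s_n^{r'} \le \sum_{m \le n} s_m^{r'} \le \Vert T \Vert_{S^{r'}}^{r'}$.

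The core of the argument is a dyadic block decomposition whose ratio is tuned to $r'$. Let $\alpha = \frac{r'}{r'-1}$ be the conjugate exponent of $r'$ and, for $i \ge 1$, let $T_i$ be the part of $\sum_n s_n \langle \cdot, f_n\rangle g_n$ taken over the indices $n$ with $2^{\alpha(i-1)} < n \le 2^{\alpha i}$ (the finitely many indices not covered, including $n=1$, are absorbed into the first block). Then $T = \sum_{i\ge 1} T_i$ with convergence in $S^{r'}$, hence in operator norm; each $T_i$ has rank $\lesssim 2^{\alpha i}$; and by the decay estimate $\Vert T_i \Vert_{S^\infty} \le s_{\lfloor 2^{\alpha(i-1)}\rfloor +1} \le 2^{-\alpha(i-1)/r'}\Vert T \Vert_{S^{r'}}$. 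Plugging these two bounds into the reformulated hypothesis gives
$$\Vert T_i \otimes id_X \Vert \;\lesssim\; C' \, (2^{\alpha i})^{\frac1r}\, 2^{-\alpha(i-1)/r'}\, \Vert T \Vert_{S^{r'}} \;\lesssim\; C'\, \Vert T \Vert_{S^{r'}}\, 2^{\alpha\left(\frac1r - \frac1{r'}\right) i}.$$
Summing over $i$ (legitimate since the series of norms converges) yields
$$\Vert T \otimes id_X \Vert \;\le\; \sum_{i\ge 1} \Vert T_i \otimes id_X \Vert \;\lesssim\; C'\, \Vert T \Vert_{S^{r'}} \sum_{i \ge 1} 2^{\alpha\left(\frac1r - \frac1{r'}\right) i},$$
and the last series is finite exactly because $\frac1r - \frac1{r'} < 0$, i.e.\ $r' < r$; its value is the constant $M$ in the statement. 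Finally, $T \otimes id_X$ is a priori only defined on simple functions, but since each $T_i$ is finite rank, $T_i \otimes id_X$ is a genuinely bounded operator with the estimate above, and on a simple function $s$ one has $(T\otimes id_X)(s) = \sum_i (T_i \otimes id_X)(s)$ with the series converging in $L^2(\Omega;X)$; this shows $T\otimes id_X$ extends to a bounded operator with $\Vert T\otimes id_X \Vert \le MC'\Vert T \Vert_{S^{r'}}$.

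Conceptually the proof is short — singular value decomposition, dyadic grouping, and the finite-rank hypothesis — so the real work, and the step I expect to be most delicate, is the control of the constants. The choice of the geometric ratio $2^{\alpha}$ with $\alpha$ the conjugate exponent of $r'$ is precisely what makes the $k^{1/r}$ loss in $e_k(X)$ balance against the $n^{-1/r'}$ decay of the singular values; arranging the block boundaries (and the treatment of the few small indices) so that the resulting geometric series has sum exactly $M$, rather than merely a comparable constant, is where one has to be careful. The other point worth flagging is that the strict inequality $r' < r$ is indispensable and is used only here, to ensure convergence of the geometric series; for $r' = r$ both the method and the conclusion break down.
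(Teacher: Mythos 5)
This lemma is imported verbatim from de la Salle's paper and the present paper offers no proof of its own, so there is nothing internal to compare against; your reconstruction --- singular value decomposition, index blocks of length comparable to $2^{\frac{r'}{r'-1}i}$, the finite-rank hypothesis applied blockwise, and a convergent geometric series (using $r'<r$) --- is exactly the argument of the cited source and is sound, including the extension from simple functions at the end. The only place your write-up falls short of the literal statement is the constant: evaluating the singular-value decay at the start of block $i$ (index $\approx 2^{\alpha(i-1)}$ with $\alpha=\frac{r'}{r'-1}$) while evaluating the rank at its end (index $\approx 2^{\alpha i}$) produces the bound $2^{\frac{1}{r'-1}}MC'\Vert T\Vert_{S^{r'}}$ rather than $MC'\Vert T\Vert_{S^{r'}}$; you flag this yourself, the spurious factor is at most $2$, and since the paper only ever invokes the lemma through Corollary \ref{norm of T otimes id using Schatten}, where every such factor is absorbed into $C(C_1,r,r')$, the discrepancy is harmless.
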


\begin{remark}
The statement of \cite{Salle}[Proposition 3.3] refers to Banach spaces with specified type and cotype, but is only uses the fact that for these spaces $e_k (X)$ can be bounded by some $ C' k^{\frac{1}{r}}$. Therefore the proof of \cite{Salle}[Proposition 3.3] actually prove the more general case stated above (this was already observed and used in \cite{LaatSalle2}).  
\end{remark}

Combining the above lemma with the theorem of Pisier stated above gives the following corollary:

\begin{corollary}
\label{norm of T otimes id using Schatten} 
Let $r \in [2, \infty)$, $r >r'$ be constants and assume there is a constant $C_1$ such that $d_k (X) \leq C_1 k^{\frac{1}{r}}$ for every $k$. Then there is a constant $C=C(C_1,r,r')$ such that for every measure space $(\Omega, \mu)$ and every $T \in B(L^2 (\Omega,\mu))$ of Schatten class $r'$, we have that
$$\Vert T \otimes id_X \Vert_{B(L^2(\Omega ; X))} \leq C \Vert T \Vert_{S^{r'}}.$$ 
\end{corollary}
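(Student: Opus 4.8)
The plan is to simply chain together the two results immediately preceding the corollary. First I would note that the hypothesis $d_k(X) \leq C_1 k^{1/r}$ for every $k$, combined with Pisier's theorem $e_k(X) \leq 2 d_k(X)$ (cited as \cite{LaatSalle2}[Theorem 5.2]), yields $e_k(X) \leq 2 C_1 k^{1/r}$ for every $k$. This puts us exactly in the situation of \cite{Salle}[Proposition 3.3] with the constant $C' = 2C_1$.

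Next I would fix some $r'$ with $r > r' \geq 2$; since the statement only asserts $r > r'$, one should first observe that we may assume $r' \geq 2$ (if the given $r'$ were smaller than $2$, we could replace it by $2$, since $S^{2} \subseteq S^{r'}$-type inclusions and $\Vert T\Vert_{S^2} \leq \Vert T\Vert_{S^{r'}}$ for $r' \le 2$ go the wrong way — more carefully, for $r' \le 2$ one has $\Vert T \Vert_{S^{r'}} \ge \Vert T\Vert_{S^2}$, so Schatten class $r'$ implies Schatten class $2$ and the bound for $r'=2$ dominates; alternatively one simply reads the corollary as already assuming $r' \ge 2$ implicitly, matching the lemma). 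Then apply \cite{Salle}[Proposition 3.3]: with $M = \sum_{i=1}^\infty 2^{\frac{r'}{r'-1}(\frac{1}{r}-\frac{1}{r'})i}$, which converges because $\frac{1}{r} - \frac{1}{r'} < 0$, we get for any measure space $(\Omega,\mu)$ and any $T \in B(L^2(\Omega,\mu))$ of Schatten class $r'$ that $\Vert T \otimes id_X \Vert_{B(L^2(\Omega;X))} \leq M (2C_1) \Vert T \Vert_{S^{r'}}$.

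Finally I would set $C = C(C_1, r, r') := 2 C_1 M$, observing that $M$ depends only on $r$ and $r'$ and hence $C$ depends only on $C_1, r, r'$ as claimed, which gives $\Vert T \otimes id_X \Vert_{B(L^2(\Omega;X))} \leq C \Vert T \Vert_{S^{r'}}$ and completes the proof. There is essentially no obstacle here: the corollary is a bookkeeping composition of the quoted lemma and the quoted theorem of Pisier. The only point requiring a moment's care is the role of $r'$ — ensuring $2 \le r' < r$ so that Proposition 3.3 applies and the series $M$ converges — but this is immediate from the hypotheses (reading the corollary as inheriting the standing assumption $r' \ge 2$ from the lemma it invokes).
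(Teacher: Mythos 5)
Your proof is correct and is exactly the paper's argument: the paper gives no separate proof, simply stating that the corollary follows by combining Pisier's inequality $e_k(X) \leq 2 d_k(X)$ with \cite{Salle}[Proposition 3.3], yielding $C = 2MC_1$ just as you compute. Your side remark about reading $r' \geq 2$ into the hypotheses is a reasonable (and correct) clarification of an implicit assumption the paper carries over from the lemma.
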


Second, we will see that if $X$ is given as an interpolation of two spaces $X_0$, $X_1$, the norm of $T \otimes id_X$ can be bounded using bounds on the norms of $T \otimes id_{X_0}, T \otimes id_{X_1}$:

\begin{lemma} \cite{Salle}[Lemma 3.1]
\label{interpolation fact}
Given a compatible pair $(X_0,X_1)$, a measure space $(\Omega,\mu)$ and an operator $T \in B(L^2 (\Omega,\mu))$, we have for every $0 \leq \theta \leq 1$ that
$$\Vert T \otimes id_{[X_0, X_1]_\theta} \Vert_{B(L^2 (\Omega ; [X_0, X_1]_\theta))} \leq \Vert T \otimes id_{X_0} \Vert_{B(L^2 (\Omega ; X_0))}^{1-\theta} \Vert T \otimes id_{X_1} \Vert_{B(L^2 (\Omega ; X_1))}^{\theta},$$
where $[X_0, X_1]_\theta$ is the interpolation of $X_0$ and $X_1$ (see definition above).
\end{lemma}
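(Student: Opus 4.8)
The plan is to reduce the statement to the basic exact-interpolation property of the complex method, once the vector-valued $L^2$ spaces are recognized as an interpolation scale. First I would recall the standard fact that complex interpolation commutes with the formation of vector-valued $L^2$ spaces: for a compatible pair $(X_0, X_1)$ and every $0 \leq \theta \leq 1$ there is an isometric identification
$$L^2(\Omega ; [X_0, X_1]_\theta) = [L^2(\Omega ; X_0), L^2(\Omega ; X_1)]_\theta .$$
Here $(L^2(\Omega ; X_0), L^2(\Omega ; X_1))$ is itself a compatible pair: a continuous linear embedding of $X_0$ and $X_1$ into a common topological vector space $Z$ induces a continuous linear embedding of $L^2(\Omega ; X_0)$ and $L^2(\Omega ; X_1)$ into $L^2(\Omega ; Z)$ (equivalently, one works inside the sum space $\Sigma(X_0,X_1)$). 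A proof of the displayed identification can be found in \cite{InterpolationSpaces}; it rests on the density of simple functions together with the fact that the norm of $[X_0,X_1]_\theta$ is computed via analytic functions valued in $\Sigma(X_0,X_1)$, which in the present situation may be taken simple in the $\Omega$-variable.

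Next I would verify that $T \otimes id$ is an admissible operator for the couple $(L^2(\Omega ; X_0), L^2(\Omega ; X_1))$. By construction $T \otimes id_X$ is defined on $X$-valued simple functions by letting $T$ act on the scalar coefficients, and this formula does not involve the norm of $X$; hence $T \otimes id_{X_0}$ and $T \otimes id_{X_1}$ agree on simple functions valued in $X_0 \cap X_1$ and assemble into a single linear map on the simple functions in $L^2(\Omega ; X_0) + L^2(\Omega ; X_1)$. If either $\Vert T \otimes id_{X_0} \Vert$ or $\Vert T \otimes id_{X_1} \Vert$ is infinite the asserted inequality is vacuous, so I may assume both are finite; write $M_j = \Vert T \otimes id_{X_j} \Vert_{B(L^2(\Omega ; X_j))}$, so that $T \otimes id$ extends to a bounded operator of norm $M_j$ on each $L^2(\Omega ; X_j)$.

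Then I would invoke the fundamental estimate of the complex interpolation method (Calderón's theorem, see \cite{InterpolationSpaces}): an operator bounded on $A_0$ with norm $\leq M_0$ and on $A_1$ with norm $\leq M_1$ is bounded on $[A_0, A_1]_\theta$ with norm $\leq M_0^{1-\theta} M_1^\theta$. Taking $A_j = L^2(\Omega ; X_j)$ gives
$$\Vert T \otimes id \Vert_{B([L^2(\Omega ; X_0), L^2(\Omega ; X_1)]_\theta)} \leq M_0^{1-\theta} M_1^\theta ,$$
and the isometric identification from the first paragraph rewrites the left-hand side as $\Vert T \otimes id_{[X_0, X_1]_\theta} \Vert_{B(L^2(\Omega ; [X_0, X_1]_\theta))}$, which is the claim. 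The only real content beyond quoting Calderón's theorem is the commutation of complex interpolation with $L^2(\Omega ; -)$; that is the step I would expect to need the most care — in particular concerning measurability of the $\Sigma(X_0, X_1)$-valued analytic functions used to define the interpolation norm and the reduction to simple functions — although for the exponent $2$ this identification is entirely standard.
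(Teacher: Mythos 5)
Your argument is correct and is essentially the proof given in the cited source: the paper itself does not reprove this lemma but quotes it from \cite{Salle}, where it is established exactly as you describe, via the isometric identification $L^2(\Omega;[X_0,X_1]_\theta)=[L^2(\Omega;X_0),L^2(\Omega;X_1)]_\theta$ (Theorem 5.1.2 in \cite{InterpolationSpaces}) followed by Calder\'on's exact interpolation theorem applied to the single operator $T\otimes id$ acting compatibly on the couple. Your attention to the compatibility of $T\otimes id$ across the couple and to the degenerate case of an unbounded factor is appropriate and complete.
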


Third, if $X$ and $X'$ are isomorphic then the norm on $T \otimes id_X$ can be bounded using the norm on $T \otimes id_{X'}$ and the Banach-Mazur distance between $X$ and $X'$.

\begin{lemma} \cite{ORobust}[Lemma 2.7]
\label{norm of T otimes id using BM}
Let $(\Omega, \mu)$ be a measure space and $T$ a bounded operator on $L^2 (\Omega, \mu)$. Given two isomorphic Banach spaces $X$, $X'$, we have that 
$$\Vert T \otimes id_X \Vert_{B(L^2(\Omega ; X))} \leq d_{BM} (X,X') \Vert T \otimes id_{X'} \Vert.$$ 
\end{lemma}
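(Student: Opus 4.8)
The plan is to transport the operator $T \otimes id_{X'}$ across a near-optimal linear isomorphism between $X'$ and $X$. Fix $\varepsilon > 0$ and choose a linear isomorphism $S \colon X' \to X$ with $\Vert S \Vert \, \Vert S^{-1} \Vert \leq d_{BM}(X,X') + \varepsilon$. Since $S$ is bounded and linear, post-composition $f \mapsto S \circ f$ sends Bochner measurable $X'$-valued functions to Bochner measurable $X$-valued functions and simple functions to simple functions; it therefore induces an operator $\widetilde{S} := id_{L^2(\Omega,\mu)} \otimes S \colon L^2(\Omega;X') \to L^2(\Omega;X)$ given fiberwise by $(\widetilde{S} f)(\omega) = S(f(\omega))$. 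The pointwise bound $\Vert S(f(\omega)) \Vert_X \leq \Vert S \Vert \, \Vert f(\omega) \Vert_{X'}$ integrates to $\Vert \widetilde{S} f \Vert_{L^2(\Omega;X)} \leq \Vert S \Vert \, \Vert f \Vert_{L^2(\Omega;X')}$, so $\Vert \widetilde{S} \Vert \leq \Vert S \Vert$; the same argument applied to $S^{-1}$ shows that $\widetilde{S}$ is invertible with inverse $id \otimes S^{-1}$ and $\Vert \widetilde{S}^{-1} \Vert \leq \Vert S^{-1} \Vert$.

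Next I would verify the conjugation identity $T \otimes id_X = \widetilde{S} \circ (T \otimes id_{X'}) \circ \widetilde{S}^{-1}$ on simple functions and extend it by density. For a simple $X$-valued function $s = \sum_{i} \chi_{E_i} v_i$, the operator $\widetilde{S}^{-1}$ replaces each $v_i$ by $S^{-1} v_i \in X'$, then $T \otimes id_{X'}$ acts only on the scalar $L^2$-coordinates (leaving the $X'$-vectors untouched), and finally $\widetilde{S}$ restores $v_i = S(S^{-1} v_i)$ fiberwise; since the fiberwise action of $S$ is linear and commutes with the scalar action of $T$, the composite equals $(T \otimes id_X) s$ by the very definition of $T \otimes id_X$ on simple functions. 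As simple functions are dense in $L^2(\Omega;X)$ and all operators involved are bounded, the identity holds on all of $L^2(\Omega;X)$.

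Combining the two steps gives
$$\Vert T \otimes id_X \Vert \leq \Vert \widetilde{S} \Vert \, \Vert T \otimes id_{X'} \Vert \, \Vert \widetilde{S}^{-1} \Vert \leq \Vert S \Vert \, \Vert S^{-1} \Vert \, \Vert T \otimes id_{X'} \Vert \leq \big( d_{BM}(X,X') + \varepsilon \big) \, \Vert T \otimes id_{X'} \Vert,$$
and letting $\varepsilon \to 0$ yields the claim. The only point that needs genuine care is the bookkeeping around the definition of $T \otimes id_X$: it is only prescribed on simple functions and then extended by continuity, so one must check that $\widetilde{S}$ and $\widetilde{S}^{-1}$ genuinely intertwine these extensions; beyond that, every inequality in the argument is a one-line norm estimate, so I do not anticipate any serious obstacle.
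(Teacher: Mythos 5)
Your argument is correct and complete: conjugating $T \otimes id_{X'}$ by the fiberwise isomorphism $\widetilde{S} = id \otimes S$ and optimizing over $S$ is exactly the standard proof of this fact, and your verification of the intertwining identity on simple functions (plus density) handles the only delicate point. The paper itself does not prove this lemma but imports it from \cite{ORobust}[Lemma 2.7], whose proof proceeds in the same way, so there is nothing to add.
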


Last, we need the following fact of regarding passing to the closure under quotients, subspaces, $l_2$-sums and ultraproducts:

\begin{lemma}\cite{Salle}[Lemma 3.1]
\label{L2 norm stability}
Let $(\Omega, \mu)$ be a measure space, $C\geq 0$ and $T$ a bounded operator on $L^2 (\Omega, \mu)$. The class of Banach spaces $X$, for which $\Vert T \otimes id_X \Vert \leq C$ is stable under quotients, subspaces, $l_2$-sums and ultraproducts.
\end{lemma}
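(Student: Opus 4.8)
Write $\mathcal{C}$ for the class of Banach spaces $X$ with $\Vert T\otimes id_X\Vert_{B(L^2(\Omega;X))}\leq C$. I would establish stability under each of the four operations separately; the first three are quick identifications of the ambient vector‑valued $L^2$‑spaces, while the ultraproduct case needs an extra localization step and is where the real content lies.

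For \emph{subspaces} I would note that if $Y\subseteq X$ is closed, then $L^2(\Omega;Y)$ is an isometric, $(T\otimes id_X)$‑invariant subspace of $L^2(\Omega;X)$ on which $T\otimes id_X$ restricts to $T\otimes id_Y$, so $\Vert T\otimes id_Y\Vert\leq\Vert T\otimes id_X\Vert$. For \emph{$\ell_2$‑sums}, if $X=(\bigoplus_\alpha X_\alpha)_{\ell_2}$ with all $X_\alpha\in\mathcal{C}$, then Tonelli's theorem gives a natural isometric identification $L^2(\Omega;X)\cong(\bigoplus_\alpha L^2(\Omega;X_\alpha))_{\ell_2}$ under which $T\otimes id_X$ becomes the diagonal operator $\bigoplus_\alpha (T\otimes id_{X_\alpha})$, and the norm of a diagonal operator on an $\ell_2$‑sum is the supremum of the norms of its entries, hence $\leq C$. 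For \emph{quotients}, if $q\colon X\to X/Y$ is the quotient map and $X\in\mathcal{C}$, then composing with $q$ defines $id\otimes q\colon L^2(\Omega;X)\to L^2(\Omega;X/Y)$, which is a metric surjection (lift simple functions value by value and pass to a limit in the usual way) and which intertwines $T\otimes id_X$ with $T\otimes id_{X/Y}$; given $f\in L^2(\Omega;X/Y)$ and $\varepsilon>0$, choosing a lift $\tilde f$ with $(id\otimes q)\tilde f=f$ and $\Vert\tilde f\Vert\leq(1+\varepsilon)\Vert f\Vert$ gives $\Vert(T\otimes id_{X/Y})f\Vert=\Vert(T\otimes id_{X/Y})(id\otimes q)\tilde f\Vert=\Vert(id\otimes q)(T\otimes id_X)\tilde f\Vert\leq\Vert(T\otimes id_X)\tilde f\Vert\leq C(1+\varepsilon)\Vert f\Vert$, and $\varepsilon\to0$ finishes.

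The \emph{ultraproduct} case is the main obstacle, since $L^2(\Omega;-)$ does not commute with ultraproducts once $\mu$ is infinite. My plan is to first prove the local reformulation
$$\Vert T\otimes id_X\Vert_{B(L^2(\Omega;X))}=\sup\left\{\Vert T\otimes id_W\Vert_{B(L^2(\Omega;W))}\ :\ W\subseteq X,\ \dim W<\infty\right\}.$$
This holds because every simple function $\sum_{k=1}^{n}\chi_{E_k}v_k\in L^2(\Omega;X)$ takes values in the finite‑dimensional subspace $W=\mathrm{span}(v_1,\dots,v_n)$ and is sent by $T\otimes id_X$ back into $L^2(\Omega;W)$, so the restriction of $T\otimes id_X$ to $L^2(\Omega;W)$ equals $T\otimes id_W$; since simple functions with finite‑measure support are norm‑dense in $L^2(\Omega;X)$, the operator norm of $T\otimes id_X$ is the supremum of the norms of these restrictions, each of which is an isometric invariant of $W$. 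In particular $X\in\mathcal{C}$ if and only if $\Vert T\otimes id_W\Vert\leq C$ for every finite‑dimensional $W\subseteq X$.

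Now suppose $X=\prod_{\mathcal{U}}X_\alpha$ with all $X_\alpha\in\mathcal{C}$, and let $W\subseteq X$ be finite‑dimensional. By the standard fact that a finite‑dimensional subspace of an ultraproduct embeds $(1+\varepsilon)$‑isomorphically into $\mathcal{U}$‑many of the factors, for each $\varepsilon>0$ there is an index $\alpha$ and an isomorphism $u\colon W\to u(W)\subseteq X_\alpha$ with $\Vert u\Vert\,\Vert u^{-1}\Vert\leq 1+\varepsilon$. The induced map $f\mapsto u\circ f$ is an isomorphism between $L^2(\Omega;W)$ and $L^2(\Omega;u(W))$ whose norm and inverse‑norm are controlled by those of $u$, and it intertwines $T\otimes id_W$ with $T\otimes id_{u(W)}$; since $\Vert T\otimes id_{u(W)}\Vert\leq\Vert T\otimes id_{X_\alpha}\Vert\leq C$, conjugation gives $\Vert T\otimes id_W\Vert\leq(1+\varepsilon)C$. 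Letting $\varepsilon\to0$ and invoking the local reformulation yields $\Vert T\otimes id_X\Vert\leq C$. The crux of the whole argument is thus this ultraproduct case — the local reformulation together with finite representability of an ultraproduct in its factors — while the only remaining care needed is the routine bookkeeping that simple functions with finite‑measure support are dense and that the identifications used above are genuinely isometric.
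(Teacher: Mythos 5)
Your argument is correct. Note that the paper itself gives no proof of this lemma: it is quoted from \cite{Salle}[Lemma 3.1], with the $l_2$-sum case explicitly left as an exercise to the reader, so there is nothing to compare against line by line. Your proof is the standard one underlying the cited result -- isometric identifications of the vector-valued $L^2$-spaces for subspaces, quotients and $l_2$-sums, and for ultraproducts the localization of $\Vert T \otimes id_X \Vert$ to finite-dimensional subspaces (which is exactly the principle behind the $d_k(X)$ and $e_k(X)$ machinery used elsewhere in the paper) combined with finite representability of an ultraproduct in its factors -- and it correctly supplies the $l_2$-sum case the paper leaves open.
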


\begin{remark}
The fact that the above class is closed under $l_2$ sums, did not appear in \cite{Salle}[Lemma 3.1] and it is left as an exercise to the reader.
\end{remark}

Combining all the results above yields the following:
\begin{corollary}
\label{bounding norm of composition of deformations}
Let $T \in B(L^2 (\Omega,\mu))$ be an operator and let $L \geq 1, r' \geq 2$ be constants such that $\Vert T \Vert_{S^{r'}} \leq 1$ and such that for every Banach space $X$ we have that $\Vert T \otimes id_X \Vert_{B(L^2(\Omega ; X))}  \leq L$. Then for every constants $r>r', C_1 \geq 1, 1 \geq  \theta_2 > 0, C_3 \geq 1$, there is a constant $C=C(C_1,r,r')$ such that for every $X \in \overline{\mathcal{E}_3 (\mathcal{E}_2 (\mathcal{E}_1 (r, C_1),\theta_2),C_3)}$ the following holds
$$\Vert T \otimes id_X \Vert_{B(L^2(\Omega ; X))}  \leq C_3 L (C \Vert T \Vert_{S^{r'}})^{\theta_2} .$$
\end{corollary}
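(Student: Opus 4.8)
The plan is to undo the four deformations in the reverse of the order in which they were applied: first strip off the closure, then the isomorphism, then the interpolation, and finally invoke the Schatten-class estimate on the innermost space, which lies in $\mathcal{E}_1(r,C_1)$. First I would use Lemma \ref{L2 norm stability}: for any fixed constant $c$ the class of Banach spaces $Y$ with $\Vert T \otimes id_Y \Vert_{B(L^2(\Omega;Y))} \leq c$ is closed under quotients, subspaces, $l_2$-sums and ultraproducts, and the bound $C_3 L (C\Vert T\Vert_{S^{r'}})^{\theta_2}$ we are aiming for does not depend on the space. Since $\overline{\mathcal{E}_3(\cdots)}$ is by definition the smallest class containing $\mathcal{E}_3(\cdots)$ closed under these operations, it therefore suffices to prove the estimate for $X \in \mathcal{E}_3(\mathcal{E}_2(\mathcal{E}_1(r,C_1),\theta_2),C_3)$. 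For such an $X$ there is $X' \in \mathcal{E}_2(\mathcal{E}_1(r,C_1),\theta_2)$ with $d_{BM}(X,X') \leq C_3$, so Lemma \ref{norm of T otimes id using BM} gives $\Vert T \otimes id_X \Vert \leq C_3 \Vert T \otimes id_{X'} \Vert$; writing $X' = [X_0,X_1]_\theta$ with $X_1 \in \mathcal{E}_1(r,C_1)$, $X_0$ a Banach space and $\theta_2 \leq \theta \leq 1$, Lemma \ref{interpolation fact} yields
\[
\Vert T \otimes id_{X'} \Vert \leq \Vert T \otimes id_{X_0} \Vert^{1-\theta}\,\Vert T \otimes id_{X_1} \Vert^{\theta}.
\]

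Next I would estimate the two factors. For $X_0$ the hypothesis applies directly, $\Vert T \otimes id_{X_0}\Vert \leq L$, and since $L \geq 1$ and $1-\theta \leq 1$ the first factor is $\leq L$. For $X_1$, since $d_k(X_1) \leq C_1 k^{1/r}$ for all $k$ and $r > r' \geq 2$ with $\Vert T\Vert_{S^{r'}} \leq 1 < \infty$, Corollary \ref{norm of T otimes id using Schatten} provides a constant $C = C(C_1,r,r')$, which we may take $\geq 1$, with $\Vert T \otimes id_{X_1}\Vert \leq C\Vert T\Vert_{S^{r'}}$; the hypothesis also gives the cruder bound $\Vert T \otimes id_{X_1}\Vert \leq L$. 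Assuming $T \neq 0$ (the case $T=0$ being trivial), we have $\Vert T\Vert_{S^{r'}} > 0$, so all the expressions below have positive base.

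The one genuinely delicate point — and the step I expect to cost the most care — is converting the exponent $\theta$ produced by Lemma \ref{interpolation fact} into the prescribed exponent $\theta_2$: since $\theta$ can exceed $\theta_2$ and raising a quantity $\geq 1$ to a larger power pushes the bound the wrong way, one cannot simply replace $\theta$ by $\theta_2$. I would resolve this by splitting into two cases according to the size of $C\Vert T\Vert_{S^{r'}}$. If $C\Vert T\Vert_{S^{r'}} \geq 1$, I would discard the Schatten estimate and use only $\Vert T \otimes id_{X_1}\Vert \leq L$, so $\Vert T \otimes id_{X'}\Vert \leq L^{1-\theta}L^{\theta}=L$ and hence $\Vert T \otimes id_X\Vert \leq C_3 L \leq C_3 L (C\Vert T\Vert_{S^{r'}})^{\theta_2}$, the last step because $(C\Vert T\Vert_{S^{r'}})^{\theta_2}\geq 1$. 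If instead $C\Vert T\Vert_{S^{r'}} < 1$, then since $0 < \theta_2 \leq \theta$ we have $(C\Vert T\Vert_{S^{r'}})^{\theta} \leq (C\Vert T\Vert_{S^{r'}})^{\theta_2}$, whence $\Vert T \otimes id_{X'}\Vert \leq L(C\Vert T\Vert_{S^{r'}})^{\theta_2}$ and $\Vert T \otimes id_X\Vert \leq C_3 L(C\Vert T\Vert_{S^{r'}})^{\theta_2}$. Both cases give the desired inequality, and note that $C$ ends up depending only on $C_1,r,r'$, as claimed. Everything apart from this case split is a direct chaining of Corollary \ref{norm of T otimes id using Schatten} and Lemmas \ref{interpolation fact}, \ref{norm of T otimes id using BM} and \ref{L2 norm stability}.
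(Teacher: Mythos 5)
Your proposal is correct and follows essentially the same route as the paper's proof: peel off the deformations in reverse order using Lemma \ref{L2 norm stability}, Lemma \ref{norm of T otimes id using BM}, Lemma \ref{interpolation fact} and Corollary \ref{norm of T otimes id using Schatten}. Your explicit case split on whether $C\Vert T\Vert_{S^{r'}}\geq 1$ in order to replace the interpolation exponent $\theta$ by $\theta_2$ is a point the paper's proof passes over silently, and handling it as you do is the right way to make that step airtight.
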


\begin{proof}
By Corollary \ref{norm of T otimes id using Schatten} there is a constant $C=C(C_1,r,r')$ such that for every $X \in \mathcal{E}_1 (r, C_1)$ the following holds:
$$\Vert T \otimes id_X \Vert_{B(L^2(\Omega ; X))} \leq C \Vert T \Vert_{S^{r'}}.$$
Combining this with Lemma \ref{interpolation fact} and our assumptions on $T$ gives that for every $X \in \mathcal{E}_2 (\mathcal{E}_1 (r, C_1),\theta_2)$, we have that
$$\Vert T \otimes id_X \Vert_{B(L^2(\Omega ; X))} \leq L (C \Vert T \Vert_{S^{r'}})^{\theta_2}.$$
Applying Lemma \ref{norm of T otimes id using BM} yields that for every $X \in \mathcal{E}_3 (\mathcal{E}_2 (\mathcal{E}_1 (r, C_1),\theta_2),C_3)$, we have that
$$\Vert T \otimes id_X \Vert_{B(L^2(\Omega ; X))}  \leq C_3 L^{1-\theta_2} (C \Vert T \Vert_{S^{r'}})^{\theta_2} .$$
Last, Lemma \ref{L2 norm stability} states that this inequality does not change when passing to the closure.
\end{proof}

\subsection{Group representations in a Banach space} 
Let $G$ be a locally compact group and $X$ a Banach space. Let $\pi$ be a representation $\pi :  G \rightarrow \mathcal{B} (X)$. Throughout this paper we shall always assume $\pi$ is continuous with respect to the strong operator topology without explicitly mentioning it. 

Denote by $C_c (G)$ the groups algebra of compactly supported simple functions on $G$ with convolution. For any $f \in C_c (G)$ we can define $\pi (f) \in \mathcal{B} (X)$ as
$$\forall v \in X, \pi (f). v = \int_G f(g) \pi(g).v  d\mu (g),$$
where the above integral is the Bochner integral with respect to the (left) Haar measure $\mu$ of $G$. 

Recall that given $\pi$ one can define the following representations:
\begin{enumerate}
\item The complex conjugation of $\pi$, denoted $\overline{\pi} : G \rightarrow \mathcal{B} (\overline{X})$ is defined as 
$$\overline{\pi} (g). \overline{v} = \overline{\pi (g). v}, \forall g \in G, \overline{v} \in \overline{X}.$$  
\item The dual representation $\pi^* : G \rightarrow \mathcal{B} (X^*)$ is defined as 
$$\langle v, \pi^* (g) u  \rangle =  \langle \pi (g^{-1}) .v, u  \rangle, \forall g \in G, v \in X, u \in X^*.$$ 
\end{enumerate}

Next, we'll restrict ourselves to the case of compact groups. Let $K$ be a compact group with a Haar measure $\mu$ and let $C_c (K) = C(K)$ defined as above. Let $X$ be Banach space and let $\pi$ be a representation of $K$ on $X$ that is continuous with respect to the strong operator topology. We shall show that for every $f \in C_c (K)$, we can bound the norm of $\pi (f)$ using the norm of $\lambda \otimes id_X \in B(L^2 (K ;X))$ (the definition of $L^2 (K; X)$ is given in subsection \ref{Vector valued spaces section} above).

\begin{proposition}\cite{ORobust}[Corollary 2.11]
\label{bounding the norm of pi(f) - proposition}
Let $\pi$ be a representation of a compact group $K$ on a Banach space $X$. Then for any real function $f \in C_c (G)$ we have that
$$\Vert \pi (f) \Vert_{B(X)} \leq \left( \sup_{g \in K} \Vert \pi (g) \Vert \right)^2  \Vert (\lambda \otimes id_X) (f) \Vert_{B(L^2 (K ; X))},$$
where $\lambda$ is the left regular representation of $G$. 
\end{proposition}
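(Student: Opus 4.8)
The plan is to realize $\pi(f)$ as a compression of the operator $(\lambda\otimes id_X)(f)$ on $L^2(K;X)$ through a bounded embedding $\Phi\colon X\to L^2(K;X)$ and a bounded left inverse $\Psi\colon L^2(K;X)\to X$, each of norm at most $\sup_{g\in K}\Vert\pi(g)\Vert$, that intertwine $\pi$ with $\lambda\otimes id_X$. Since $K$ is compact we may and do normalize the Haar measure $\mu$ so that $\mu(K)=1$.

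First I would define $\Phi v\in L^2(K;X)$ by $(\Phi v)(k)=\pi(k^{-1})v$. Strong continuity of $\pi$ makes $k\mapsto\pi(k^{-1})v$ a continuous, hence Bochner measurable and essentially bounded, $X$-valued function, so $\Phi v\in L^2(K;X)$, and since $\mu(K)=1$ we get $\Vert\Phi v\Vert_{L^2(K;X)}\le\left(\sup_{g\in K}\Vert\pi(g)\Vert\right)\Vert v\Vert_X$; thus $\Phi$ is bounded with $\Vert\Phi\Vert\le\sup_{g}\Vert\pi(g)\Vert$. Dually, define $\Psi\colon L^2(K;X)\to X$ by the Bochner integral $\Psi F=\int_K\pi(k)F(k)\,d\mu(k)$; by Cauchy--Schwarz together with $\mu(K)=1$ one has $\Vert\Psi F\Vert_X\le\int_K\Vert\pi(k)\Vert\,\Vert F(k)\Vert_X\,d\mu(k)\le\left(\sup_{g}\Vert\pi(g)\Vert\right)\Vert F\Vert_{L^2(K;X)}$. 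A direct computation using $\mu(K)=1$ and $\pi(k)\pi(k^{-1})=\pi(e)=I$ gives $\Psi\Phi v=\int_K\pi(k)\pi(k^{-1})v\,d\mu(k)=v$, i.e.\ $\Psi\Phi=id_X$.

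Next I would verify the intertwining relation. For $g\in K$ and $v\in X$, writing out both sides shows $(\Phi\pi(g)v)(k)=\pi(k^{-1}g)v=(\Phi v)(g^{-1}k)=\big((\lambda(g)\otimes id_X)\Phi v\big)(k)$, so $\Phi\pi(g)=(\lambda(g)\otimes id_X)\Phi$ for every $g$. Since $\Phi$ is bounded and linear it commutes with the Bochner integrals defining $\pi(f)$ and $(\lambda\otimes id_X)(f)$, whence $\Phi\pi(f)=(\lambda\otimes id_X)(f)\,\Phi$. Composing on the left with $\Psi$ and using $\Psi\Phi=id_X$ yields $\pi(f)=\Psi\,(\lambda\otimes id_X)(f)\,\Phi$, and taking operator norms gives $\Vert\pi(f)\Vert_{B(X)}\le\Vert\Psi\Vert\,\Vert(\lambda\otimes id_X)(f)\Vert_{B(L^2(K;X))}\,\Vert\Phi\Vert\le\left(\sup_{g\in K}\Vert\pi(g)\Vert\right)^2\Vert(\lambda\otimes id_X)(f)\Vert_{B(L^2(K;X))}$, as claimed.

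I expect no serious obstacle: the argument is soft, and the only points that need care are bookkeeping ones --- normalizing $\mu$ so the $L^2\to L^1$ and sup-estimates for $\Phi$ and $\Psi$ go through cleanly, checking Bochner measurability and integrability of the relevant $X$-valued functions from strong continuity of $\pi$, and getting the left/right and $g$ versus $g^{-1}$ conventions in $\Phi$, $\lambda$ and $\pi(f)$ aligned so that the intertwining identity comes out exactly (using the left regular representation forces $(\Phi v)(k)=\pi(k^{-1})v$ rather than $\pi(k)v$). The hypothesis that $f$ is real is not actually needed for this inequality; it is presumably retained because the companion statements in \cite{ORobust} relate $\Vert(\lambda\otimes id_X)(f)\Vert$ to quantities (such as Schatten norms of $\lambda(f)$ on the scalar space $L^2(K)$) for which self-adjointness of $\lambda(f)$ is convenient.
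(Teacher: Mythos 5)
Your proof is correct. Note that the paper itself gives no proof of this proposition --- it is imported verbatim from \cite{ORobust}[Corollary 2.11] --- so there is nothing in this document to compare against; your dilation--compression argument ($\pi(f)=\Psi\,(\lambda\otimes id_X)(f)\,\Phi$ with $\Vert\Phi\Vert,\Vert\Psi\Vert\le\sup_{g}\Vert\pi(g)\Vert$) is the standard route to exactly this bound, with each of the two factors of $\sup_{g}\Vert\pi(g)\Vert$ accounted for, and all the measurability and intertwining checks you flag do go through (in particular $(\lambda\otimes id_X)(f)=\int_K f(g)\,(\lambda(g)\otimes id_X)\,d\mu(g)$, as one verifies on simple functions). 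You are also right that the hypothesis that $f$ is real is not used.
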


\subsection{Asplund spaces}

\begin{definition}
A Banach space $X$ is said to be an Asplund space if every separable subspace of $X$ has a separable dual.
\end{definition}

There are many examples of Asplund spaces - for instance every reflexive space is Asplund. A very nice exposition of Asplund spaces was given by Yost in \cite{Yost}. The reason we are interested in Asplund space is the following theorem of Megrelishvili:

\begin{theorem}\cite{Megre}[Corollary 6.9]
\label{Asplund implies continuous dual rep}
Let $G$ be a topological group and let $\pi$ be a continuous representation of $G$ on a Banach space $X$. If $X$ is an Asplund space, then the dual representation $\pi^*$ is also continuous.
\end{theorem}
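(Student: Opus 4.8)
The plan is to prove that the set $V$ of all $\phi \in X^*$ for which the orbit map $g \mapsto \pi^*(g)\phi$ is norm-continuous at the identity $e$ is in fact all of $X^*$; once $V = X^*$ is known, norm-continuity at $e$ propagates to all of $G$ because $\pi^*(g)\phi - \pi^*(a)\phi = \pi^*(a)\big(\pi^*(a^{-1}g)\phi - \phi\big)$ and $a^{-1}g \to e$ as $g \to a$, so $\pi^*$ is a continuous representation. First I would record two soft properties of $V$. It is clearly a linear subspace, and it is norm-closed: if $\phi_n \to \phi$ with $\phi_n \in V$, then $\|\pi^*(g)\phi - \phi\| \le \|\pi^*(g)\|\,\|\phi - \phi_n\| + \|\pi^*(g)\phi_n - \phi_n\| + \|\phi_n - \phi\|$, and since $\|\pi^*(g)\|$ is bounded for $g$ near $e$ (checked in the next step) the right-hand side tends to $\|\phi - \phi_n\|$ as $g \to e$, which is arbitrarily small. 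It is also $\pi^*$-invariant: for $\phi \in V$ and $a \in G$ one has $\pi^*(g)\pi^*(a)\phi - \pi^*(a)\phi = \pi^*(a)\big(\pi^*(a^{-1}ga)\phi - \phi\big)$, and since conjugation is continuous $a^{-1}ga \to e$ as $g \to e$, whence $\pi^*(a)\phi \in V$. Consequently it suffices to produce, for each fixed $\phi$, a single $g_0 \in G$ with $\pi^*(g_0)\phi \in V$: then $\phi = \pi^*(g_0^{-1})\big(\pi^*(g_0)\phi\big) \in V$ by invariance.

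Second, I would check that for a fixed $\phi$ the orbit map $\Phi : g \mapsto \pi^*(g)\phi$ is $w^*$-continuous and locally bounded. For $v \in X$ we have $\langle v, \pi^*(g)\phi\rangle = \langle \pi(g^{-1})v, \phi\rangle$, which depends continuously on $g$ because $\pi$ is strongly continuous; hence $\Phi$ is continuous from $G$ into $(X^*, w^*)$. Also $\|\pi^*(g)\phi\| \le \|\pi(g^{-1})\|\,\|\phi\|$, and since $G$ has a compact (hence Baire) neighbourhood $U$ of $e$, the uniform boundedness principle gives $M := \sup_{g \in U}\|\pi(g)\| < \infty$, so $\Phi$ maps $U$ into the $w^*$-compact ball $B \subseteq X^*$ of radius $M\|\phi\|$. (In the applications in this paper $G$ is locally compact, so this is automatic; the general topological-group case is handled in \cite{Megre}.)

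Third comes the only place where the Asplund hypothesis is used. Since $X$ is Asplund, by the Namioka--Phelps characterisation every bounded subset of $X^*$, in particular $B$, is fragmented by the norm with respect to the $w^*$ topology: every nonempty $w^*$-closed subset of $B$ has, for each $\varepsilon > 0$, a nonempty relatively $w^*$-open piece of norm-diameter less than $\varepsilon$. A standard Baire-category argument (Namioka's joint-continuity method) then shows that for each $\varepsilon > 0$ the set $\{g \in U : \operatorname{osc}_{\|\cdot\|}\Phi(g) < \varepsilon\}$ is open and dense in $U$: given a nonempty relatively open $O \subseteq U$, apply fragmentability to $A := \overline{\Phi(O)}^{\,w^*}$ to obtain a $w^*$-open set $W$ with $W \cap A \ne \emptyset$ and $W \cap A$ of norm-diameter less than $\varepsilon$; since $\Phi(O)$ is $w^*$-dense in $A$ the open set $\Phi^{-1}(W) \cap O$ is nonempty, and $\Phi$ has norm-oscillation less than $\varepsilon$ at each of its points. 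Intersecting over $\varepsilon = 1/n$ and using that $U$ is Baire, $\Phi$ is norm-continuous at every point of a dense $G_\delta$ subset of $U$; choose one such point $g_0$.

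Finally I would homogenise. As $g \to e$ we have $gg_0 \to g_0$, so norm-continuity of $\Phi$ at $g_0$ gives $\pi^*(g)\,\pi^*(g_0)\phi = \pi^*(gg_0)\phi = \Phi(gg_0) \to \Phi(g_0) = \pi^*(g_0)\phi$ in norm; that is, $\pi^*(g_0)\phi \in V$, and hence $\phi \in V$ by the reduction in the first paragraph. Since $\phi \in X^*$ was arbitrary, $V = X^*$ and the theorem follows. The step I expect to be the genuine obstacle is the third one: correctly stating and invoking the fragmentability of $(B, w^*)$ coming from Asplundness (Namioka--Phelps) together with the generic-continuity lemma. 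The first, second, and fourth steps are routine manipulations with the representation identities, the uniform boundedness principle, and the group structure.
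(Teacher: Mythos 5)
The paper offers no proof of this statement: it is imported verbatim from Megrelishvili's work, so there is no in-paper argument to compare against. Your reconstruction follows what is essentially Megrelishvili's own route, and it is correct as written: reduce to producing a single point of norm-continuity of the orbit map $\Phi(g)=\pi^*(g)\phi$ via the norm-closed, $\pi^*$-invariant subspace $V$; check that $\Phi$ is $w^*$-continuous into a bounded set; invoke the Namioka--Phelps characterisation of Asplund spaces (bounded subsets of $X^*$ are $w^*$-to-norm fragmented); and run Namioka's Baire-category argument to get a dense $G_\delta$ of points of norm-continuity, then translate back to the identity. The group identities you use ($\pi^*(g)\pi^*(h)=\pi^*(gh)$ under the paper's convention for the dual representation) all check out. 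The one place where your argument is genuinely narrower than the stated theorem is the local boundedness of $\Vert\pi(g)\Vert$ near $e$: your appeal to the uniform boundedness principle on a compact neighbourhood requires $G$ to be locally compact (which you flag), whereas the theorem is stated for arbitrary topological groups. Note that this bound is not a cosmetic convenience: without it your proof that $V$ is norm-closed and your claim that $\Phi$ maps a neighbourhood of $e$ into a $w^*$-compact ball both collapse, so the fully general case really does require the more careful treatment in the cited source. Since every group to which this paper applies the theorem is locally compact (closed subgroups of $Aut(\Sigma)$ with compact stabilizers and cocompact action), the restriction is harmless for the purposes at hand.
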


\section{Angle between more than 2 projections and space decomposition}
\label{Angle between more than 2 projections and space decomposition}
The aim of this section is to show that given several projections on a Banach space, this space can be decomposed with respect to these projections, given that the angle between every two projections is large enough. The main motivation for establishing such a decomposition is applying it to deduce vanishing of cohomology relying on Theorem \ref{general vanishing of cohomology based on decomposition}. In order to prove this decomposition, we define and study the notion an angle between several projections. 

Following our main motivation, we will think about our projections as defined by faces of a simplex:
\begin{definition}
\label{simplex projections definition}
Let $X$ be a Banach space and let $\triangle = \lbrace 0,...,n \rbrace$ be a simplex with $n+1$ vertices. For $k = -1,0,...,n$, denote by $\triangle (k)$ the $k$-dimensional faces of $\triangle$, i.e., the subsets of $\triangle$ with cardinality $k+1$.

Let $P_\sigma$ be projections defined for every $\sigma \in \triangle (n) \cup \triangle (n-1)$ such that
$$\forall \sigma \in \triangle (n-1), P_\sigma P_\triangle = P_\sigma.$$

For every $\tau \subseteq \triangle$ define an operator $T_\sigma$ as follows:
$$T_\tau = \begin{cases}
P_\triangle & \tau = \triangle \\
\dfrac{\sum_{\sigma \in \triangle  (n-1), \tau \subseteq \sigma  } P_\sigma}{\vert \triangle \setminus \tau \vert} & \tau \neq \triangle
\end{cases}
.$$

Fix $\tau \subsetneqq \triangle$. If $T_\tau^i$ converges to a projection on the space $\cap_{\sigma \in \triangle  (n-1), \tau \subseteq \sigma} \im (P_\sigma)$ as $i \rightarrow \infty$, then we define $P_\tau = \lim T_\tau^i$. In this case we say that $P_\tau$ exists.
\end{definition}

\begin{remark}
We note that the above setting is general for any $n+1$ projections $P_0,...,P_{n}$. Indeed, given any such projections, we can always denote $P_{i} = P_{\triangle \setminus \lbrace i \rbrace}$ and take $P_\triangle = I$ (the reason we define the operator $P_\triangle$ above is that in the setting we will consider, such an operator appears naturally). 
\end{remark}

\begin{remark}
\label{P-triangle always commute remark}
By the definition of $P_\triangle$, we have for every $\tau \subseteq \triangle$ and every $i$ that
$$T_\tau^i P_\triangle = T_\tau^i P_\triangle = T_\tau^i.$$
Therefore for every $\tau \subseteq \triangle$, if $P_\tau$ exists, then $P_\tau P_\triangle = P_\tau$. 
\end{remark}

Using this notations, we will define the $\cos$ of an angle between more than $2$ projections:

\begin{definition}
Let $X$ and $P_\sigma$ for $\sigma \in \triangle (n-1)$ be defined as in definition \ref{simplex projections definition} above.
Fix $1 \leq k \leq n$. Denote $Sym ( 0,1,...,k )$ to be the group of all permutations of $\lbrace 0,1,...,k \rbrace$. 

For $\sigma_0,...,\sigma_k \in \triangle (n-1)$ pairwise disjoint, denote $\tau = \bigcap_{i=0}^k \sigma_i$. If $P_\tau$ exists, define $\cos (\angle (P_{\sigma_0},...,P_{\sigma_k}))$ as 
$$\cos (\angle (P_{\sigma_0},...,P_{\sigma_k})) = \max_{\pi \in Sym (0,...,k)} \Vert P_{\sigma_{\pi (0)}} P_{\sigma_{\pi (1)}} ... P_{\sigma_{\pi (k)}} (I- P_\tau ) \Vert.$$
\end{definition}


\begin{theorem}
\label{angle between several projections theorem}
Let $X$, $\triangle$, $P_\sigma$ for $\sigma \in \triangle (n-1)$ be defined as above and assume $n>1$. Assume that for every $\eta \in \triangle (n-2)$, the projection $P_{\eta}$ exists and that
$$\forall \sigma \in \triangle (n-1), \eta \subseteq \sigma \Rightarrow P_\eta P_\sigma = P_\eta.$$
Also assume that $ \max_{\sigma \in \triangle (n-1)}  \Vert P_\sigma \Vert \leq \beta_0,$
where $\beta_0 >1$ is the constant of Corollary \ref{Quick uniform convergence corollary}. 

Then for every $\varepsilon >0$ there is $\gamma>0$ such that if 
$$\max \lbrace  \cos(\angle (P_\sigma,P_{\sigma '})) : \sigma, \sigma' \in \triangle (n-1) \rbrace \leq \gamma.$$
then for every $\tau \subseteq \triangle$, $P_\tau$ is well defined and for every pairwise disjoint $\sigma_0,...,\sigma_k \in \triangle (n-1)$ the following holds:
$$\cos (\angle (P_{\sigma_0},...,P_{\sigma_k})) \leq \varepsilon.$$
\end{theorem}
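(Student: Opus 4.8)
The plan is to run an induction on the codimension of the face $\tau$, establishing simultaneously that $P_\tau$ exists and that all the relevant multi-projection angles are small. The base case is essentially Corollary \ref{Quick uniform convergence corollary}: for $\tau \in \triangle(n-2)$, the hypothesis says $P_\tau$ already exists; for $\tau$ of the form $\sigma \cap \sigma'$ with $\sigma,\sigma' \in \triangle(n-1)$ (i.e.\ $\tau \in \triangle(n-2)$ again, since two distinct codimension-$1$ faces meet in a codimension-$2$ face), the operator $T_\tau$ is the average of the two projections $P_\sigma, P_{\sigma'}$, so Corollary \ref{Quick uniform convergence corollary} gives $\Vert T_\tau^\infty - T_\tau^i\Vert \le 8 \left(\tfrac34\right)^{i-1}$ once $\cos(\angle(P_\sigma,P_{\sigma'})) \le \gamma_0$, and Proposition \ref{canonical proposition} identifies $T_\tau^\infty$ with $P_\tau$ because the hypothesis $P_\eta P_\sigma = P_\eta$ supplies the "canonical" projection. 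The content of the theorem is pushing this down to faces of smaller dimension.

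For the inductive step, fix $\tau \in \triangle(k)$ with $k < n-2$ and assume the statement has been proved for all faces of dimension $> k$. The operator $T_\tau$ is the average of the $P_\sigma$ over the $\sigma \in \triangle(n-1)$ containing $\tau$; I want to apply Theorem \ref{Quick uniform convergence criterion} to this family. That theorem requires, for each pair $\sigma_1,\sigma_2$ in the family, a projection $P_{\sigma_1 \cap \sigma_2}$ with $P_{\sigma_1 \cap \sigma_2} P_{\sigma_i} = P_{\sigma_1 \cap \sigma_2}$, and it requires the pairwise cosines $\cos(\angle(P_{\sigma_1},P_{\sigma_2}))$ to be below the threshold $\tfrac{1}{8N-11}$, where $N = \vert \triangle \setminus \tau \vert$. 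Both are available by induction: $\sigma_1 \cap \sigma_2$ has dimension $> k$, so $P_{\sigma_1\cap\sigma_2}$ exists by the inductive hypothesis, and the $k=1$ case of the multi-projection angle bound (applied at the higher-dimensional faces) controls the pairwise cosines — but one must check that $P_{\sigma_1\cap\sigma_2} P_{\sigma_i} = P_{\sigma_1\cap\sigma_2}$, which needs a separate argument (see below). Granting this, Theorem \ref{Quick uniform convergence criterion} yields that $T_\tau^i$ converges in operator norm to a projection $T_\tau^\infty$ onto $\bigcap_{\sigma \supseteq \tau} \im(P_\sigma)$, and Proposition \ref{canonical proposition} together with the existence of a canonical projection (built by composing the $P_\sigma$'s, or by descending from $P_\eta$ for $\eta \in \triangle(n-2)$ containing $\tau$) shows $T_\tau^\infty = P_\tau$. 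Thus $P_\tau$ exists.

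Once existence is in hand for all $\tau$, the multi-projection angle bound follows from the quantitative convergence estimate. Given pairwise disjoint $\sigma_0,\dots,\sigma_k$ with intersection $\tau$, write $P_{\sigma_{\pi(0)}} \cdots P_{\sigma_{\pi(k)}}(I - P_\tau)$ and telescope: since $P_\tau = \lim_i T_\tau^i$ and $\Vert P_\tau - T_\tau^i\Vert \le C r^{i-1}$ with $r < 1$, and since each $P_{\sigma_j}(I - P_\tau)$ or $P_{\sigma_j} T_\tau^i$ can be expanded using $P_{\sigma_j} P_\sigma$ relations, a product of $k+1$ such factors acquires a factor that is a power of $\max \cos(\angle(P_\sigma,P_{\sigma'})) \le \gamma$ (the point being that $P_{\sigma}(I-P_\tau)$ is small when $\tau$ is close to $\sigma \cap \sigma'$ for some $\sigma'$ in the chain). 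Choosing $\gamma$ small enough as a function of $\varepsilon$, $n$, and $\beta_0$ drives the whole expression below $\varepsilon$. Since there are only finitely many faces $\tau$ and finitely many tuples, one $\gamma$ works uniformly.

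The main obstacle, I expect, is the bookkeeping in the inductive step — specifically verifying the compatibility relation $P_{\sigma_1 \cap \sigma_2} P_{\sigma_i} = P_{\sigma_1 \cap \sigma_2}$ needed to invoke Theorem \ref{Quick uniform convergence criterion}, and the dual relation that makes $P_\tau$ the canonical projection so that Proposition \ref{canonical proposition} applies. In the Hilbert setting these are automatic from orthogonality, but here one has to propagate them through the limit defining $P_{\sigma_1 \cap \sigma_2}$: one shows $P_{\sigma_1\cap\sigma_2} P_{\sigma_i} = (\lim_j T_{\sigma_1\cap\sigma_2}^j) P_{\sigma_i} = \lim_j T_{\sigma_1\cap\sigma_2}^j$ using that each averaged operator $T_{\sigma_1\cap\sigma_2}$ is built from $P_\sigma$'s with $\sigma \supseteq \sigma_1\cap\sigma_2$, each of which absorbs $P_{\sigma_i}$ on the right (because $\sigma_i \supseteq \sigma_1 \cap \sigma_2$ forces the face relations to chain correctly), combined with the hypothesis $P_\eta P_\sigma = P_\eta$ for $\eta \in \triangle(n-2)$. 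Tracking which relations survive which limits, and confirming that the constants from Corollary \ref{Quick uniform convergence corollary} and Theorem \ref{Quick uniform convergence criterion} compose without blowing up over the at-most-$n$ layers of induction, is where the real care is required; the rest is a routine geometric-series estimate.
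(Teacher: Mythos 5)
Your overall architecture is more complicated than necessary, and the part of the argument that carries the real content is missing. On the existence of $P_\tau$: no induction on the codimension of $\tau$ is needed. For any $\tau \subsetneqq \triangle$, the operator $T_\tau$ is the average of the projections $P_\sigma$ with $\sigma \in \triangle(n-1)$, $\tau \subseteq \sigma$, and the pairwise intersection of any two such $\sigma$'s is always an element of $\triangle(n-2)$, regardless of how small $\tau$ is. The theorem's hypotheses --- that $P_\eta$ exists for every $\eta \in \triangle(n-2)$ and that $P_\eta P_\sigma = P_\eta$ whenever $\eta \subseteq \sigma$ --- are therefore exactly the hypotheses of Corollary \ref{Quick uniform convergence corollary} for this family, so convergence of $T_\tau^i$ to $P_\tau$ with the uniform rate $4(N)\left(\frac{2N-1}{2N}\right)^{i-1}$ follows in a single application for every $\tau$ at once. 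The ``separate argument'' you flag for $P_{\sigma_1\cap\sigma_2}P_{\sigma_i}=P_{\sigma_1\cap\sigma_2}$ is literally one of the stated assumptions, and your concern about constants compounding over $n$ layers of induction is moot because there are no layers.

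The genuine gap is in the bound on $\cos(\angle(P_{\sigma_0},\dots,P_{\sigma_k}))$. Your proposed mechanism --- that ``$P_{\sigma}(I-P_\tau)$ is small'' and that telescoping a product of $k+1$ factors ``acquires a power of $\gamma$'' --- does not work: for a single $\sigma_j$ the operator $P_{\sigma_j}(I-P_\tau)$ is not small (its norm is of order $1$); only the \emph{full} product $P_{\sigma_0}\cdots P_{\sigma_k}$ approximately annihilates the complement of $P_\tau$, and extracting a factor of $\gamma$ from it requires a specific device. The paper's route is: replace $P_\tau$ by $T_\tau^{i_0}$ for a fixed large $i_0$ (cost at most $\varepsilon/2$, using the convergence rate and $\Vert P_{\sigma_j}\Vert \leq \beta_0$); factor $I-T_\tau^{i_0} = T_\tau'\bigl(\binom{i_0}{1}I - \binom{i_0}{2}T_\tau' + \cdots\bigr)$ with $T_\tau' = I - T_\tau$ and bound the polynomial factor by $4^{i_0}/3$; and then prove the key inequality $\Vert P_{\sigma_0}\cdots P_{\sigma_k}(I-P_{\sigma_j})\Vert \leq (k-j)2^{k+1}\gamma$ by induction on $k-j$, using the almost-commutativity $\Vert P_{\sigma_j}P_{\sigma_k}-P_{\sigma_k}P_{\sigma_j}\Vert\leq 2\gamma$ (which follows from the pairwise angle bound) to move $(I-P_{\sigma_j})$ rightward until it meets $P_{\sigma_j}$ and vanishes. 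This last induction is where the factor of $\gamma$ actually appears, and nothing in your sketch substitutes for it.
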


\begin{proof}
Let $\gamma_0>0$ and $\beta_0$ be the constants of Corollary \ref{Quick uniform convergence corollary} and fix $\varepsilon >0$. Note that $\beta_0 \leq 2$.

Fix $1 \leq k \leq n$ and $\sigma_0,...,\sigma_k \in \triangle (n-1)$. Denote $\tau = \cap_{j=0}^k \sigma_j$. 

Assume first that $\gamma \leq \gamma_0$, then by Corollary \ref{Quick uniform convergence corollary}, we have that $T_\tau^i$ converges to $P_\tau$ and 
\begin{equation}
\label{rate of conv ineq}
\Vert P_\tau - T_\tau^i \Vert \leq 4(k+1) \left( \frac{2(k+1)-1}{2(k+1)} \right)^{i-1} \leq 4(n+1) \left( \frac{2(n+1)-1}{2(n+1)} \right)^{i-1}. 
\end{equation}

Without loss of generality, it is enough to show that there is $\gamma$ such that 
$$\Vert P_{\sigma_0} ... P_{\sigma_k} (I- P_\tau ) \Vert \leq \varepsilon.$$
By \eqref{rate of conv ineq}, we can choose $i_0$ large enough such that 
$$\Vert P_\tau - T_\tau^{i_0} \Vert \leq \frac{\varepsilon}{2^{n+2}} ,$$ 
and this $i_0$ can be chosen independently of $k$.

Therefore 
\begin{align*}
\Vert  P_{\sigma_0} ... P_{\sigma_k} (I- P_\tau ) \Vert \leq \Vert  P_{\sigma_0} ... P_{\sigma_k} (I- T_\tau^{i_0} ) \Vert + \Vert  P_{\sigma_0} ... P_{\sigma_k} (T_\tau^{i_0}- P_\tau ) \Vert \leq \\
\Vert  P_{\sigma_0} ... P_{\sigma_k} (I- T_\tau^{i_0} ) \Vert + \Vert P_{\sigma_0} \Vert ... \Vert P_{\sigma_k} \Vert \frac{\varepsilon}{2^{n+2}} \leq \\
\Vert  P_{\sigma_0} ... P_{\sigma_k} (I- T_\tau^{i_0} ) \Vert + \beta_0^{k+1} \frac{\varepsilon}{2^{n+2}} \leq \\ \Vert  P_{\sigma_0} ... P_{\sigma_k} (I- T_\tau^{i_0} ) \Vert+ \frac{\varepsilon}{2}.
\end{align*}
We are left to show that by choosing $\gamma$ small enough, we can ensure that 
$$\Vert  P_{\sigma_0} ... P_{\sigma_k} (I- T_\tau^{i_0} ) \Vert \leq \dfrac{\varepsilon}{2}.$$
Denote $T_\tau' = I- T_\tau= \frac{(I-P_{\sigma_0}) +...+(I-P_{\sigma_k})}{k+1}$. Note that
$$I-T_\tau^{i_0} =T_\tau ' \left({i_0 \choose 1} I - {i_0 \choose 2} T_\tau ' + ...+ (-1)^{i_0-1} {i_0 \choose i_0} \left( T_\tau ' \right)^{i_0-1} \right).$$
Recall that by our assumptions $\Vert T_\tau \Vert \leq \beta_0$ and therefore that $\Vert T_\tau ' \Vert \leq 1+\beta_0 \leq 3$. This yields that
\begin{dmath*}
\Vert  P_{\sigma_0} ... P_{\sigma_k} (I- T_\tau^{i_0} ) \Vert \leq \\ 
\Vert  P_{\sigma_0} ... P_{\sigma_k} T_\tau ' \Vert \left\Vert {i_0 \choose 1} I - {i_0 \choose 2} T_\tau ' + ...+ (-1)^{i_0-1} {i_0 \choose i_0} \left( T_\tau ' \right)^{i_0-1} \right\Vert \leq \\
\Vert   P_{\sigma_0} ... P_{\sigma_k} T_\tau ' \Vert \left(\Vert I \Vert +{i_0 \choose 2} \Vert T_\tau ' \Vert + ... +  {i_0 \choose i_0} \Vert T_\tau ' \Vert^{i_0-1} \right) \leq \\
\Vert   P_{\sigma_0} ... P_{\sigma_k} T_\tau ' \Vert \dfrac{1}{3}\left(3 +{i_0 \choose 2} 3^2 + ... +  {i_0 \choose i_0} 3^{i_0} \right) \leq \Vert   P_{\sigma_0} ... P_{\sigma_k} T_\tau ' \Vert \dfrac{4^{i_0}}{3}. 
\end{dmath*}
Therefore it is enough to show we can choose $\gamma$ small enough such that 
$$\Vert   P_{\sigma_0} ... P_{\sigma_k} T_\tau ' \Vert \leq \dfrac{3}{4^{i_0}} \dfrac{\varepsilon}{2},$$
(note that $i_0$ is independent of $\gamma$ as long as $\gamma \leq \gamma_0$). We will finish the proof by showing that 
\begin{equation}
\label{claimed ineq}
\Vert   P_{\sigma_0} ... P_{\sigma_k} T_\tau ' \Vert \leq n 2^{n+1} \gamma.
\end{equation}
By the definition of $ T_\tau '$, we have that
\begin{dmath*}
\Vert P_{\sigma_0} ... P_{\sigma_k} T_\tau ' \Vert \leq  
\left\Vert \dfrac{ P_{\sigma_0} ... P_{\sigma_k} (I-P_{\sigma_0})}{k+1} \right\Vert +...+  \left\Vert \dfrac{ P_{\sigma_0} ... P_{\sigma_k} (I-P_{\sigma_k})}{k+1} \right\Vert.
\end{dmath*}

Therefore, in order to prove inequality \eqref{claimed ineq}, it is enough to show that for every $j,k$ such that $k\geq j \geq 0$, we have that 
$$\Vert P_{\sigma_0} ... P_{\sigma_k} (I-P_{\sigma_j}) \Vert \leq (k-j) 2^{k+1} \gamma.$$
We will show this by induction on $k-j$. If $k-j=0$, i.e., if $k=j$ then 
$$P_{\sigma_0} ... P_{\sigma_k} (I-P_{\sigma_k}) =0,$$
and we are done. Assume that $k>j$ and that the inequality holds for $k-1,j$, i.e., assume that
$$\Vert P_{\sigma_0} ... P_{\sigma_{k-1}} (I-P_{\sigma_j}) \Vert \leq (k-1-j) 2^{k} \gamma.$$
Then for $k$ and $j$ we have that
\begin{dmath*}
P_{\sigma_0} ... P_{\sigma_{k}} (I-P_{\sigma_j}) = P_{\sigma_0} ... P_{\sigma_{k-1}} (P_{\sigma_{k}}-P_{\sigma_{k}} P_{\sigma_j}) = P_{\sigma_0} ... P_{\sigma_{k-1}} (P_{\sigma_{k}}- P_{\sigma_j} P_{\sigma_{k}}) +P_{\sigma_0} ... P_{\sigma_{k-1}} (P_{\sigma_j} P_{\sigma_{k}}-P_{\sigma_{k}} P_{\sigma_j})  = P_{\sigma_0} ... P_{\sigma_{k-1}} (I- P_{\sigma_j}) P_{\sigma_{k}} +P_{\sigma_0} ... P_{\sigma_{k-1}} (P_{\sigma_j} P_{\sigma_{k}}-P_{\sigma_{k}} P_{\sigma_j}).
\end{dmath*}
Therefore 
\begin{dmath*}
{\Vert P_{\sigma_0} ... P_{\sigma_{k}} (I-P_{\sigma_j}) \Vert} \leq \\ \Vert P_{\sigma_0} ... P_{\sigma_{k-1}} (I- P_{\sigma_j}) P_{\sigma_{k}} \Vert + \Vert P_{\sigma_0} ... P_{\sigma_{k-1}} (P_{\sigma_j} P_{\sigma_{k}}-P_{\sigma_{k}} P_{\sigma_j}) \Vert.
\end{dmath*}
Note that 
$$\Vert P_{\sigma_j} P_{\sigma_{k}}-P_{\sigma_{k}} P_{\sigma_j} \Vert \leq \Vert P_{\sigma_j} P_{\sigma_{k}}-P_{\sigma_{k} \cap \sigma_j} \Vert + \Vert P_{\sigma_k} P_{\sigma_{j}}-P_{\sigma_{k} \cap \sigma_j} \Vert \leq 2 \gamma,$$
and therefore
$$\Vert P_{\sigma_0} ... P_{\sigma_{k-1}} (P_{\sigma_j} P_{\sigma_{k}}-P_{\sigma_{k}} P_{\sigma_j}) \Vert \leq \Vert P_{\sigma_0} ... P_{\sigma_{k-1}} \Vert 2 \gamma \leq 2^{k+1} \gamma.$$
Also, note that by the induction assumption
$$\Vert P_{\sigma_0} ... P_{\sigma_{k-1}} (I- P_{\sigma_j}) P_{\sigma_{k}} \Vert \leq (k-1-j) 2^{k} \gamma \Vert P_{\sigma_{k}} \Vert \leq (k-1-j) 2^{k+1} \gamma.$$
Combining the two inequalities above yields
\begin{dmath*}
{\Vert P_{\sigma_0} ... P_{\sigma_{k}} (I-P_{\sigma_j}) \Vert} \leq (k-j) 2^{k+1} \gamma,
\end{dmath*}
as needed.
\end{proof}

\begin{definition} [Consistency]
\label{consistency definition}
Let $X$, $\triangle$, $P_\sigma$ for $\sigma \in \triangle (n-1)$ defined as above. We shall say that the projections $P_\sigma$ for $\sigma \in \triangle (n-1)$ are consistent, given that for every $\tau \subseteq \eta \subsetneqq \triangle$, if $P_\tau$ and $P_\eta$ exist then $P_\tau P_\eta = P_\tau$.
\end{definition}

\begin{remark}
If the projections $P_\sigma$ for $\sigma \in \triangle (n-1)$ are consistent and $P_\tau$ exists for every $\tau \subseteq \triangle$, then for every $\tau , \tau' \subseteq \triangle$, we can define $\cos ( \angle (P_\tau, P_{\tau '}))$ as in the background section, i.e.,
$$\cos ( \angle (P_\tau, P_{\tau '})) = \max \lbrace \Vert P_\tau P_{\tau '} - P_{\tau \cap \tau '} \Vert, \Vert P_{\tau '} P_\tau - P_{\tau \cap \tau '} \Vert \rbrace.$$
\end{remark}

\begin{proposition}
\label{consistency is checked on P_i's proposition}
Let $X$, $\triangle$, $P_\sigma$ for $\sigma \in \triangle (n-1)$ defined as above. Assume that for every $\tau \in \triangle$, $P_\tau$ exists. Then the projections $P_\sigma$ for $\sigma \in \triangle (n-1)$ are consistent if and only if for
$$\forall \tau \subsetneqq \triangle, \forall \sigma \in \triangle (n-1), \tau \subseteq \sigma \Rightarrow P_\tau P_\sigma = P_\tau.$$
\end{proposition}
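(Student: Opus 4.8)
The plan is to prove the two implications separately, with the reverse implication being the only one that requires any real work.

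For the forward (``only if'') direction, I would assume the projections $P_\sigma$, $\sigma \in \triangle(n-1)$, are consistent and simply specialize the definition. Given $\tau \subsetneqq \triangle$ and $\sigma \in \triangle(n-1)$ with $\tau \subseteq \sigma$, both $P_\tau$ and $P_\sigma$ exist: $P_\tau$ exists by the standing hypothesis of the proposition, and $P_\sigma$ exists trivially because for a codimension-$1$ face one has $|\triangle\setminus\sigma| = 1$ and the only $\sigma' \in \triangle(n-1)$ with $\sigma \subseteq \sigma'$ is $\sigma$ itself, so $T_\sigma = P_\sigma$ and its powers are constant. Since $\tau \subseteq \sigma \subsetneqq \triangle$ is a nested pair with both projections existing, Definition \ref{consistency definition} gives $P_\tau P_\sigma = P_\tau$ directly.

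For the reverse (``if'') direction, I would fix a nested pair $\tau \subseteq \eta \subsetneqq \triangle$ and aim to show $P_\tau P_\eta = P_\tau$, which is exactly what consistency requires (recall all relevant projections exist by hypothesis). The starting point is that $P_\eta = \lim_{i\to\infty} T_\eta^i$ in operator norm, where $T_\eta = |\triangle\setminus\eta|^{-1}\sum_{\sigma \in \triangle(n-1),\, \eta \subseteq \sigma} P_\sigma$ by Definition \ref{simplex projections definition}. Every codimension-$1$ face $\sigma$ occurring in this sum satisfies $\tau \subseteq \eta \subseteq \sigma$, so the hypothesis yields $P_\tau P_\sigma = P_\tau$; since the number of $\sigma \in \triangle(n-1)$ containing $\eta$ is precisely $|\triangle\setminus\eta|$, left-multiplying $T_\eta$ by $P_\tau$ collapses the average to $P_\tau T_\eta = P_\tau$. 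A one-line induction then gives $P_\tau T_\eta^{\,i} = P_\tau$ for all $i \geq 1$, and letting $i \to \infty$ via $\|P_\tau T_\eta^{\,i} - P_\tau P_\eta\| \leq \|P_\tau\|\,\|T_\eta^{\,i} - P_\eta\| \to 0$ finishes the argument.

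I do not expect a genuine obstacle here; the only points to watch are the combinatorial bookkeeping that makes the averaged sum cancel to exactly $P_\tau$ (rather than to a scalar multiple of it) and the degenerate case $\eta \in \triangle(n-1)$, where $T_\eta = P_\eta$ and the claim is literally the hypothesis.
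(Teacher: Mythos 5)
Your proposal is correct and follows essentially the same route as the paper: the forward direction is a direct specialization of the consistency definition, and the reverse direction uses $P_\tau P_\sigma = P_\tau$ for all codimension-one $\sigma \supseteq \eta \supseteq \tau$ to collapse the average $T_\eta$, giving $P_\tau T_\eta^i = P_\tau$ for all $i$ and hence $P_\tau P_\eta = P_\tau$ in the limit. The extra details you supply (the count $|\triangle\setminus\eta|$ of relevant faces and the norm estimate for passing to the limit) are exactly the steps the paper leaves implicit.
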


\begin{proof}
One direction is trivial - assume that the projections $P_\sigma$ for $\sigma \in \triangle (n-1)$ are consistent, then for every $\tau \subseteq \eta \subsetneqq \triangle$, we have that $P_\tau P_\eta = P_\tau$ and in particular this holds for every $\eta \in \triangle (n-1)$.

In the other direction, fix some  $\tau \subseteq \eta \subsetneqq \triangle$. By our assumptions, we have for every $\sigma \in \triangle (n-1)$, $\tau \subseteq \sigma$ that $P_\tau P_\sigma = P_\tau$. Therefore, by the definition of $T_\eta$,
$$\forall i, P_\tau (T_\eta)^i = P_\tau,$$
which in turn implies that $P_\tau P_\eta = P_\tau$ as needed.
\end{proof}

\begin{proposition}
Let $X$, $\triangle$, $P_\sigma$ for $\sigma \in \triangle (n-1)$ defined as above. Assume that for every $\tau \subseteq \triangle$, $P_\tau$ exists. If for every $\tau \subsetneqq \triangle$ there is a projection $P_\tau '$ on $\cap_{\sigma \in \triangle (n-1), \tau \subseteq \sigma} \im (P_\sigma)$ such that 
$$\forall \sigma \in \triangle (n-1), \tau \subseteq \sigma  \Rightarrow P_\tau' P_\sigma = P_\tau ',$$
then the projections $P_\sigma$ for $\sigma \in \triangle (n-1)$ are consistent and for every $\tau \subsetneqq \triangle$, $P_\tau = P_\tau '$.
\end{proposition}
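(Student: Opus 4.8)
The plan is to read this statement off Proposition~\ref{canonical proposition}, in two steps: first establish the identity $P_\tau = P_\tau'$ for every $\tau\subsetneqq\triangle$, and then deduce consistency of the family $\{P_\sigma\}_{\sigma\in\triangle(n-1)}$ from Proposition~\ref{consistency is checked on P_i's proposition}.

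For the first step, I would fix $\tau\subsetneqq\triangle$ and let $\sigma_1,\dots,\sigma_N$ be the faces of $\triangle(n-1)$ containing $\tau$, where $N=\vert\triangle\setminus\tau\vert$; writing $Q_j=P_{\sigma_j}$ we have $T_\tau=\frac1N(Q_1+\dots+Q_N)$ and $\bigcap_{\sigma\in\triangle(n-1),\,\tau\subseteq\sigma}\im(P_\sigma)=\bigcap_{j=1}^N\im(Q_j)$. Since by assumption $P_\tau$ exists, $T_\tau^i$ converges in operator norm to $P_\tau$, which is a projection onto this intersection; and by hypothesis $P_\tau'$ is a projection onto the same intersection with $P_\tau'Q_j=P_\tau'$ for every $j$. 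When $N\ge2$, Proposition~\ref{canonical proposition} applied to $Q_1,\dots,Q_N$ (with $T=T_\tau$, $T^\infty=P_\tau$ and $P_{1,\dots,N}=P_\tau'$) immediately yields $P_\tau=P_\tau'$. The only point requiring separate attention is the degenerate case $N=1$, i.e.\ $\tau\in\triangle(n-1)$, which is excluded by the hypothesis $N\ge2$ of Proposition~\ref{canonical proposition}: here $T_\tau=P_\tau$ is already a projection, so $\im(P_\tau')=\im(P_\tau)$, and combining $P_\tau'P_\tau=P_\tau'$ (hypothesis) with the trivial $P_\tau'P_\tau=P_\tau$ (a projection is the identity on its image) gives $P_\tau=P_\tau'$ again.

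For the second step, now that $P_\tau=P_\tau'$ for all $\tau\subsetneqq\triangle$, I would note that for any $\tau\subsetneqq\triangle$ and any $\sigma\in\triangle(n-1)$ with $\tau\subseteq\sigma$ one has $P_\tau P_\sigma=P_\tau'P_\sigma=P_\tau'=P_\tau$, so the criterion of Proposition~\ref{consistency is checked on P_i's proposition} is met and the family $\{P_\sigma\}_{\sigma\in\triangle(n-1)}$ is consistent; this finishes the proof. (Alternatively one can verify consistency directly: for $\tau\subseteq\eta\subsetneqq\triangle$ every $\sigma$ occurring in $T_\eta$ contains $\tau$, hence $P_\tau'T_\eta=P_\tau'$, so $P_\tau'T_\eta^i=P_\tau'$ for all $i$, and letting $i\to\infty$ gives $P_\tau P_\eta=P_\tau'P_\eta=P_\tau'=P_\tau$.) Beyond the $N=1$ bookkeeping there is no genuine obstacle here — the content is entirely carried by Proposition~\ref{canonical proposition}, which says that the limit of the averaged projections must coincide with a ``canonical'' projection whenever one is available.
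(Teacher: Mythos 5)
Your proposal is correct and follows essentially the same route as the paper: the paper's proof is exactly "apply Proposition \ref{canonical proposition} to get $T_\tau^i \to P_\tau'$, then invoke Proposition \ref{consistency is checked on P_i's proposition}." Your extra handling of the degenerate case $\tau \in \triangle(n-1)$ (where $N=1$ falls outside the formal hypotheses of Proposition \ref{canonical proposition}) is a harmless bit of bookkeeping the paper glosses over.
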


\begin{proof}
By Proposition \ref{canonical proposition}, we have that $T^i_\tau$ converges to $P_\tau '$ for every $\tau \subsetneqq \triangle$ and the consistency follows from Proposition \ref{consistency is checked on P_i's proposition}. 
\end{proof}

The main tool that we will use to decompose the space $X$ is the following theorem stating that bounding the angle between each $P_{\sigma},P_{\sigma'}$ where $\sigma, \sigma' \in \triangle (n-1)$ gives a bound on the angle between $P_\tau,P_{\tau '}$ where $\tau, \tau '$ are any faces of $\triangle$.
\begin{theorem}
\label{small angle theorem}
Let $X$, $\triangle$, $P_\triangle$ and $P_\sigma$ for $\sigma \in \triangle (n-1)$ defined as above.  Assume the following:
\begin{enumerate}
\item The projections $P_\sigma$ for $\sigma \in \triangle (n-1)$ are consistent.
\item For any $\eta \in \triangle (n-2)$, the projections $P_\eta$ exist.
\item $ \max_{\sigma \in \triangle (n-1) \cup \triangle (n)} \Vert P_\sigma \Vert \leq \beta_0,$ where $\beta_0 >1$ is the constant of Corollary \ref{Quick uniform convergence corollary}. 
\end{enumerate}
Then for every $\varepsilon >0$ there is $\gamma>0$ such that if 
$$ \max \lbrace  \cos(\angle (P_\sigma,P_{\sigma '})) : \sigma, \sigma ' \in \triangle (n-1) \rbrace \leq \gamma.$$
then the following holds:
\begin{enumerate}
\item For every $\tau \subseteq \triangle$, $P_\tau$ exists and $\Vert P_\tau \Vert \leq 4(n+1)+2$.
\item For every $\tau, \tau' \subseteq \triangle$ and every $\eta \subseteq \triangle$ such that $\tau \cap \tau' \subseteq \eta$ we have that
$$\Vert P_\tau P_{\tau'} (I-P_\eta) \Vert \leq \varepsilon.$$ 
In particular, $\cos (\angle (P_\tau,P_{\tau '})) \leq \varepsilon$.
\end{enumerate}

\end{theorem}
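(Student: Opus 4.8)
The plan is to derive both conclusions from Corollary~\ref{Quick uniform convergence corollary} and Theorem~\ref{angle between several projections theorem}, the bridge being a lemma that says a finite product of the projections $P_\sigma$, $\sigma\in\triangle(n-1)$, is close to the averaged-projection limit $P_{\bigcap\sigma}$.

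\emph{Conclusion (1).} Fix $\tau\subseteq\triangle$. If $\tau\in\triangle(n)\cup\triangle(n-1)$ then $P_\tau$ is one of the given projections and $\Vert P_\tau\Vert\le\beta_0\le 4(n+1)+2$. Otherwise $N:=\vert\triangle\setminus\tau\vert\ge 2$, and $\{P_\sigma:\sigma\in\triangle(n-1),\ \tau\subseteq\sigma\}$ consists of exactly the $N$ projections $P_{\triangle\setminus\{v\}}$, $v\notin\tau$. For $\sigma\neq\sigma'$ in this family, $\sigma\cap\sigma'\in\triangle(n-2)$, so by assumption~(2) the projection $P_{\sigma\cap\sigma'}$ exists; by Definition~\ref{simplex projections definition} its image is $\im(P_\sigma)\cap\im(P_{\sigma'})$, and by consistency $P_{\sigma\cap\sigma'}P_\sigma=P_{\sigma\cap\sigma'}=P_{\sigma\cap\sigma'}P_{\sigma'}$; thus $P_{\sigma\cap\sigma'}$ is an admissible pairwise projection and the quantity $\cos(\angle(P_\sigma,P_{\sigma'}))$ in the hypothesis is computed with it. Taking $\gamma\le\gamma_0$ (with $\beta_0\le 2$ and $\gamma_0$ chosen uniformly for $2\le N\le n+1$, exactly as in the proof of Theorem~\ref{angle between several projections theorem}) and invoking Corollary~\ref{Quick uniform convergence corollary} gives $T_\tau^i\to P_\tau$, so $P_\tau$ exists, with $\Vert P_\tau-T_\tau^i\Vert\le 4(n+1)(\tfrac{2n+1}{2n+2})^{i-1}$; at $i=1$, since $\Vert T_\tau\Vert\le\beta_0\le 2$, this yields $\Vert P_\tau\Vert\le 4(n+1)+2$. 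In particular every $P_\tau$ now exists, so consistency holds in the full form $P_\tau P_\eta=P_\tau$ for all $\tau\subseteq\eta\subsetneq\triangle$.

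\emph{The lemma.} Next I would prove: for every $\delta>0$ and $k\in\mathbb N$ there is $\gamma>0$ such that, under the angle hypothesis with this $\gamma$, one has $\Vert P_{\rho_1}\cdots P_{\rho_k}-P_\nu\Vert\le\delta$ for all $\rho_1,\dots,\rho_k\in\triangle(n-1)$, where $\nu:=\bigcap_i\rho_i$. Indeed, for distinct $\sigma,\sigma'\in\triangle(n-1)$ the angle bound and the identity $P_\sigma P_{\sigma\cap\sigma'}=P_{\sigma\cap\sigma'}$ give $\Vert P_\sigma P_{\sigma'}-P_{\sigma\cap\sigma'}\Vert\le\gamma$, and likewise with $\sigma,\sigma'$ swapped, hence $\Vert P_\sigma P_{\sigma'}-P_{\sigma'}P_\sigma\Vert\le 2\gamma$. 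A bubble sort of $(\rho_1,\dots,\rho_k)$ using these commutations brings equal entries together at total cost $\le\binom{k}{2}2^{k-1}\gamma$; collapsing repeated factors ($P_\eta^c=P_\eta$) replaces the product by $P_{\eta_{\pi(1)}}\cdots P_{\eta_{\pi(m)}}$, where $\eta_1,\dots,\eta_m$ are the distinct values among the $\rho_i$ and $\pi$ a permutation. Since $\nu\subseteq\eta_j$ for each $j$, Definition~\ref{simplex projections definition} gives $\im(P_\nu)\subseteq\im(P_{\eta_j})$, so $P_{\eta_j}P_\nu=P_\nu$ and hence $P_{\eta_{\pi(1)}}\cdots P_{\eta_{\pi(m)}}P_\nu=P_\nu$; applying Theorem~\ref{angle between several projections theorem} to the pairwise distinct faces $\eta_1,\dots,\eta_m$ (with a preassigned accuracy $\varepsilon'$) bounds $\Vert P_{\eta_{\pi(1)}}\cdots P_{\eta_{\pi(m)}}(I-P_\nu)\Vert$ by $\varepsilon'$, so $\Vert P_{\eta_{\pi(1)}}\cdots P_{\eta_{\pi(m)}}-P_\nu\Vert\le\varepsilon'$. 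Choosing $\varepsilon'$ and $\gamma$ small in terms of $\delta,n,k$ finishes the lemma (the case $m=1$ is trivial).

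\emph{Conclusion (2).} First reduce to $\eta=\mu:=\tau\cap\tau'$: if $\triangle\in\{\tau,\tau',\eta\}$ then $P_\tau P_{\tau'}(I-P_\eta)=0$ by Remark~\ref{P-triangle always commute remark} and consistency, and otherwise $\mu\subseteq\eta\subsetneq\triangle$ gives $P_\mu P_\eta=P_\mu$, whence $(I-P_\eta)=(I-P_\mu)(I-P_\eta)$ and $\Vert P_\tau P_{\tau'}(I-P_\eta)\Vert\le(1+\Vert P_\eta\Vert)\Vert P_\tau P_{\tau'}(I-P_\mu)\Vert$. So it suffices to handle $\eta=\mu$ with $\tau,\tau'$ proper faces. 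There $\mu\subseteq\tau$ and $\mu\subseteq\tau'$ give $\im(P_\mu)\subseteq\im(P_\tau)\cap\im(P_{\tau'})$, so $P_\tau P_{\tau'}P_\mu=P_\mu$ and $\Vert P_\tau P_{\tau'}(I-P_\mu)\Vert=\Vert P_\tau P_{\tau'}-P_\mu\Vert$. Now enumerate $\triangle\setminus\tau=\{a_1,\dots,a_p\}$ and $\triangle\setminus\tau'=\{b_1,\dots,b_q\}$, set $\Pi_A:=P_{\triangle\setminus\{a_1\}}\cdots P_{\triangle\setminus\{a_p\}}$ and $\Pi_B:=P_{\triangle\setminus\{b_1\}}\cdots P_{\triangle\setminus\{b_q\}}$, and apply the lemma with $k\le p+q\le 2(n+1)$ three times: $\Vert\Pi_A-P_\tau\Vert\le\delta$ (its factors intersect in $\tau$), $\Vert\Pi_B-P_{\tau'}\Vert\le\delta$, and $\Vert\Pi_A\Pi_B-P_\mu\Vert\le\delta$ (the factors of $\Pi_A\Pi_B$, with repetitions, are exactly $P_{\triangle\setminus\{u\}}$ for $u\in(\triangle\setminus\tau)\cup(\triangle\setminus\tau')$, which intersect in $\mu$). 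Then
\[
\Vert P_\tau P_{\tau'}-P_\mu\Vert\le\Vert P_\tau-\Pi_A\Vert\,\Vert P_{\tau'}\Vert+\Vert\Pi_A\Vert\,\Vert P_{\tau'}-\Pi_B\Vert+\Vert\Pi_A\Pi_B-P_\mu\Vert,
\]
and since $\Vert P_{\tau'}\Vert\le 4(n+1)+2$ and $\Vert\Pi_A\Vert\le\Vert P_\tau\Vert+\delta$, the right-hand side is at most a fixed multiple of $\delta$ depending only on $n$; choosing $\delta$, hence $\gamma=\gamma(\varepsilon,n)$, small enough makes it $\le\varepsilon$. Finally, $\cos(\angle(P_\tau,P_{\tau'}))\le\varepsilon$ is the case $\eta=\mu$ applied to both $P_\tau P_{\tau'}$ and $P_{\tau'}P_\tau$.

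\emph{Main obstacle.} The crux is the lemma, together with the realization that it is the right intermediate statement. The naive approach — approximating $P_\tau,P_{\tau'},P_\mu$ by large powers $T^{i_0}$, expanding, and writing $I-T_\mu^{i_0}=T_\mu'\cdot(\text{polynomial in }T_\mu')$ — breaks down, because the polynomial has norm of order $4^{i_0}$ and the product terms have norm of order $\beta_0^{2i_0}$, which together overwhelm the $o(1)$ proportion of terms in which some relevant $P_\sigma$ fails to appear. Passing instead to honest finite products of the $P_\sigma$'s removes all powers and reduces conclusion~(2) to Theorem~\ref{angle between several projections theorem}; once the lemma is in hand, the remaining steps are routine.
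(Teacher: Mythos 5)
Your proof is correct, and its skeleton --- existence and the norm bound via Corollary \ref{Quick uniform convergence corollary}, reduction of the general $\eta$ to $\eta=\tau\cap\tau'$ via consistency, and then reduction of the two-projection angle to the multi-projection angle of Theorem \ref{angle between several projections theorem} --- is the paper's skeleton as well; part (1) is proved identically. Where you genuinely diverge is the middle step of part (2). The paper uses consistency to get \emph{exact} identities $P_\tau = P_\tau P_{\sigma_0}\cdots P_{\sigma_j}$ and $P_{\tau'} = P_{\sigma_{j+1}}\cdots P_{\sigma_k}P_{\tau'}$, where $\sigma_0,\dots,\sigma_k$ are the codimension-one faces containing $\tau\cap\tau'$ split according to whether they contain $\tau$ or $\tau'$; this sandwiches the full product $P_{\sigma_0}\cdots P_{\sigma_k}(I-P_{\tau\cap\tau'})$ between $P_\tau$ and $P_{\tau'}$ with no error term, and Theorem \ref{angle between several projections theorem} finishes in one line. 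You instead prove an auxiliary approximation lemma (any finite product of the $P_\sigma$'s is $\delta$-close to the averaged-projection limit onto their common intersection, via approximate commutation, bubble sorting, and collapsing of repeated factors) and approximate $P_\tau,P_{\tau'},P_{\tau\cap\tau'}$ by the explicit products $\Pi_A,\Pi_B,\Pi_A\Pi_B$. All the estimates in your lemma check out, but the detour is unnecessary under the stated hypotheses: the exact sandwiching is available precisely because consistency is assumed, and the ``naive power-approximation'' argument your closing paragraph warns against is not the one the paper uses. Your approximate-commutation lemma is, however, close in spirit to what the paper is forced to do in the appendix (Lemma \ref{almost commutativity lemma} and Theorem \ref{small angle theorem without consistency assumption theorem}) when consistency is dropped, so it buys robustness at the cost of length; the paper's route buys brevity by leaning on consistency as hard as possible.
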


\begin{remark}
Variations of the above theorem were proven in the setting of Hilbert spaces in \cite{DymaraJ}, \cite{ErshovJZ} and \cite{Kassabov}. However, all these proofs use the fact that in a Hilbert space the following equality holds for any two subspaces $U,V$: $\angle (V,U) = \angle (V^\perp, U^\perp)$, where the angle here is the Friedrichs angle. In our setting, we do not know if such equality holds, namely if $\cos (\angle (P_\tau,P_{\tau '})) =  \cos (\angle (I-P_{\tau},I-P_{\tau'}))$ (we don't even know if $\cos (\angle (I-P_{\tau},I-P_{\tau'}))$ is well defined). This limitation required us to give a more direct proof using the idea of angle between several projections. 
\end{remark}

\begin{proof}
Let $\gamma_0>0$ and $\beta_0$ be the constants of Corollary \ref{Quick uniform convergence corollary} and let $\varepsilon ' >0$ be a constant to be determined later. By Theorem \ref{angle between several projections theorem}, there is a constant $\gamma_1 >0$ such that if 
$$ \max \lbrace  \cos(\angle (P_\sigma,P_{\sigma '})) : \sigma, \sigma ' \in \triangle (n-1) \rbrace \leq \gamma,$$
then for any $k = 1,...,n$ and for any $\eta \in \triangle (n-1-k)$, we have that
$$\cos (\angle (P_{\sigma_0},...,P_{\sigma_k} )) \leq \varepsilon',$$
where $\sigma_0,...,\sigma_k \in \triangle (n-1)$ are all the $n-1$ faces of $\triangle$ that contain $\eta$. Choose $\gamma = \min \lbrace \gamma_0, \gamma_1 \rbrace$.

If $\tau \in \triangle (n-1) \cup \triangle (n)$, then $P_\tau$ exists and $\Vert P_\tau \Vert \leq \beta_0 < 2 \leq 4(n+1)+2$. Assume next $\vert \tau \vert < n$, then by Corollary \ref{Quick uniform convergence corollary} we have that $P_\sigma$ is exists and 
$$\Vert P_\tau \Vert \leq 4(n+1) + \Vert T_\tau \Vert \leq 4(n+1)+\beta_0 \leq 4(n+1)+2.$$
This concludes the proof of the first assertion of the theorem.

Let $\tau, \tau ' \subseteq \triangle$ and $\eta \subseteq \triangle$ such that $\tau \cap \tau ' \subseteq \eta$. First, we note that by the consistency assumption $P_{\tau \cap \tau' } (I-P_\eta)=0$ and therefore
\begin{dmath*}
{\Vert P_\tau P_{\tau '} (I-P_\eta) \Vert = \Vert P_\tau P_{\tau '} (I-P_{\tau \cap \tau' })(I-P_\eta) \Vert \leq} \\ \cos (\angle (P_\tau, P_{\tau '})) \Vert I-P_\eta \Vert \leq (4(n+1)+3)\cos (\angle (P_\tau, P_{\tau '})).
\end{dmath*}
Therefore, it is enough to show that for $\gamma$ small enough
$$\Vert P_\tau P_{\tau '}(I - P_{\tau \cap \tau '}) \Vert \leq \dfrac{\varepsilon}{(4(n+1)+3)},$$
for any $\tau, \tau' \subseteq \triangle$. 

Note that if $\tau = \tau'$ or $\tau = \triangle$ or $\tau' = \triangle$, then $\cos (\angle (P_\tau, P_{\tau '})) =0$ and there is nothing to prove. Therefore, we can assume that $\tau \cap \tau' \in \triangle (n-1-k)$ for $1 \leq k \leq n$. Let $\sigma_0,...,\sigma_k \in \triangle (n-1)$ be all the pairwise disjoint simplices that contain $\tau \cap \tau'$. Without loss of generality we can assume that
$$\tau \subseteq \sigma_0,...,\tau \subseteq \sigma_j \text{ and } \tau' \subseteq \sigma_{j+1},...,\tau' \subseteq \sigma_k.$$
We note that by the consistency assumption
$$P_\tau = P_\tau P_{\sigma_0} ... P_{\sigma_j},$$
and
$$P_{\tau'} = P_{\sigma_{j+1}} ... P_{\sigma_k} P_{\tau'}.$$
Therefore
\begin{dmath*}
\Vert P_\tau P_{\tau '}(I - P_{\tau \cap \tau '}) \Vert = \\
 \Vert P_\tau  P_{\sigma_0} ... P_{\sigma_k} P_{\tau'} (I- P_{\tau \cap \tau '}) \Vert = \\
 \Vert P_\tau  P_{\sigma_0} ... P_{\sigma_k} (I- P_{\tau \cap \tau '}) P_{\tau'} \Vert \leq \\
  \Vert P_\tau \Vert \Vert P_{\tau'} \Vert \cos (\angle (P_{\sigma_0},...,P_{\sigma_k})) \leq (4(n+1)+2)^2 \varepsilon '. 
\end{dmath*}
We conclude by choosing $\varepsilon ' = \frac{\varepsilon}{(4(n+1)+2)^2 (4(n+1)+3)}$.
\end{proof}

\begin{remark}
Theorem \ref{small angle theorem} can be proven without the assumption that the projections $P_\sigma$ with $\sigma \in \triangle (n-1)$ are consistent. However, we could not prove Theorem \ref{space decomposition} below without this assumption (see remark after the proof of Theorem \ref{space decomposition}). Our motivation for proving Theorem \ref{small angle theorem} was deducing Theorem \ref{space decomposition} and therefore we assumed consistency in the proof (this assumption simplifies the proof considerably). For completeness, we added a proof of Theorem \ref{small angle theorem} in the appendix that does not rely on the consistency assumption. 
\end{remark}

Assuming that $P_\eta$ exists for each $\eta \subseteq \triangle$, we denote $X_\eta = \im (P_\eta)$ and
$$X^\eta = 
\begin{cases}
X_\emptyset & \eta = \emptyset \\
X_\eta \cap \bigcap_{\tau \subsetneqq \eta} \Ker (P_\tau) & \eta \neq \emptyset
\end{cases}.$$
The next theorem states that under suitable bounds on the angles between the $P_\sigma$'s for $\sigma \in \triangle (n-1)$ and the norms of the $P_\sigma$'s for $\sigma \in \triangle (n-1) \cup \triangle(n)$ , we have that 
$$X_\eta = \bigoplus_{\tau \subseteq \eta} X^\tau.$$
\begin{theorem}
\label{space decomposition}
Let $X$, $\triangle$, $P_\triangle$ and $P_\sigma$ for $\sigma \in \triangle (n-1)$ defined as above. Assume the following:
\begin{enumerate}
\item The projections $P_\sigma$ for $\sigma \in \triangle (n-1)$ are consistent.
\item For every $\tau \in \triangle (n-2)$, the projection $P_\tau$ exists.
\item $ \max_{\sigma \in \triangle (n-1) \cup \triangle (n)} \Vert P_\sigma \Vert \leq \beta_0,$ where $\beta_0 >1$ is the constant of Corollary \ref{Quick uniform convergence corollary}. 
\end{enumerate}
Then there is $\gamma >0$ such that if 
$$ \max \lbrace  \cos(\angle (P_\sigma,P_{\sigma '})) : \sigma, \sigma ' \in \triangle (n-1) \rbrace \leq \gamma,$$
then for every $\eta \subseteq \triangle$, $P_\eta$ exists and
$$X_\eta = \bigoplus_{\tau \subseteq \eta} X^\tau.$$
\end{theorem}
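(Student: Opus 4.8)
The plan is to fix $\gamma$ small enough that Theorem \ref{small angle theorem} applies with a parameter $\varepsilon$ to be chosen at the end of the argument, so that all the $P_\tau$ exist, $\Vert P_\tau\Vert\le 4(n+1)+2$ for every $\tau\subseteq\triangle$, and $\Vert P_\tau P_{\tau'}(I-P_\eta)\Vert\le\varepsilon$ whenever $\tau\cap\tau'\subseteq\eta$. I would then prove the identity $X_\eta=\bigoplus_{\tau\subseteq\eta}X^\tau$ by induction on $\vert\eta\vert$. The base case $\eta=\emptyset$ is trivial since $X^\emptyset=X_\emptyset$. For the inductive step, fix $\eta$ with $\vert\eta\vert\ge 1$ and assume the decomposition holds for all proper subfaces of $\eta$. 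The natural candidate for a projection of $X_\eta$ onto $X^\eta$ along $\bigoplus_{\tau\subsetneqq\eta}X^\tau$ is the operator
$$Q_\eta \;=\; P_\eta\prod_{\tau\subsetneqq\eta}(I-P_\tau),$$
or more precisely an alternating inclusion–exclusion combination $\sum_{\tau\subseteq\eta}(-1)^{\vert\eta\vert-\vert\tau\vert}(\text{products of }P_\tau\text{'s})$ built so that $Q_\eta$ acts as the identity on $X^\eta$, annihilates each $X^\tau$ for $\tau\subsetneqq\eta$, and (using consistency, i.e.\ $P_\tau P_\eta=P_\tau$ for $\tau\subseteq\eta$) has image inside $X_\eta$. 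The key computation is that $\sum_{\tau\subseteq\eta}(\pm Q_\tau)$ telescopes to $P_\eta$ up to an error term consisting of products of the form $P_{\tau_1}P_{\tau_2}\cdots(I-P_{\tau_r})$ with $\tau_1\cap\cdots\cap\tau_r\subseteq$ some intermediate face, each of which is bounded by a constant multiple of $\varepsilon$ by Theorem \ref{small angle theorem}(2).

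Concretely, I would first establish that $\sum_{\tau\subseteq\eta}(-1)^{\vert\eta\vert-\vert\tau\vert}\prod_{(\text{suitable ordering})}P_{\cdot}$ equals $P_\eta$ exactly in the commuting (Hilbert) case, and in general differs from $P_\eta$ by a sum of commutator-type terms each of norm $O(\varepsilon)$; choosing $\varepsilon$ small enough makes the difference have norm $<1$, hence the sum $\sum_{\tau\subseteq\eta}\widetilde Q_\tau$ (the appropriate normalization) is invertible on $X_\eta$, which forces $X_\eta=\sum_{\tau\subseteq\eta}\im(\widetilde Q_\tau)$. The second half is to show the sum is direct: if $\sum_{\tau\subseteq\eta}x_\tau=0$ with $x_\tau\in X^\tau$, apply a suitable composition of the $P_\sigma$'s and the near-projections $\widetilde Q_\tau$ to isolate each $x_\tau$; the off-diagonal terms $\widetilde Q_{\tau}x_{\tau'}$ with $\tau\ne\tau'$ are not exactly zero but have norm $O(\varepsilon)\Vert x_{\tau'}\Vert$, so a Neumann-series / strict-diagonal-dominance argument over the finite poset of subfaces of $\eta$ yields $x_\tau=0$ for all $\tau$ once $\varepsilon$ is small relative to $1/2^{\vert\triangle\vert}$ and the uniform norm bound $4(n+1)+2$.

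The main obstacle I anticipate is bookkeeping the noncommutativity: in the Hilbert setting the orthogonal projections onto the $X_\tau$ commute after passing to orthogonal complements, which makes the inclusion–exclusion identity exact, but here $P_\tau P_{\tau'}\ne P_{\tau'}P_\tau$ in general and one only controls $\Vert P_\tau P_{\tau'}-P_{\tau\cap\tau'}\Vert$. So every telescoping identity must be carried out with explicit error terms, and one must check that the number of error terms (which depends only on $n$, via the number of subfaces of $\triangle$) stays bounded, so that a single choice of $\varepsilon=\varepsilon(n)$ and correspondingly $\gamma=\gamma(n)$ works. The consistency hypothesis is precisely what makes $\im(\widetilde Q_\tau)\subseteq X_\tau\subseteq X_\eta$ and $P_\tau\widetilde Q_\eta=0$ hold exactly rather than approximately, which is why it cannot be dropped here (as flagged in the remark following the statement). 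Once the finite-dimensional-poset linear algebra with $O(\varepsilon)$ perturbations is set up, choosing $\gamma$ small enough to feed into Theorem \ref{small angle theorem} completes the argument.
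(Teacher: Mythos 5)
Your overall skeleton --- feed Theorem \ref{small angle theorem} a small $\varepsilon$, induct on $\vert\eta\vert$, build a projection of $X_\eta$ onto $X^\eta$ killing the lower pieces, and get directness from strict diagonal dominance using $\Vert R_\tau R_{\tau'}\Vert = O(\varepsilon)$ --- is exactly the strategy of the paper (which follows Dymara--Januszkiewicz). But the central construction as you propose it has a genuine gap: a \emph{finite} inclusion--exclusion product such as $Q_\eta = P_\eta\prod_{\tau\subsetneqq\eta}(I-P_\tau)$ does not have image inside $X^\eta$. For $\tau\subsetneqq\eta$ one has $P_\tau Q_\eta \neq 0$; the factor $(I-P_\tau)$ is separated from a potential left multiplication by $P_\tau$ by the other non-commuting factors, so $P_\tau Q_\eta$ is only $O(\varepsilon)$-small, not zero. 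Likewise $Q_\eta$ only annihilates $X^{\tau}$ up to $O(\varepsilon)$ for all but the rightmost label. Consequently your invertibility argument yields a decomposition of $X_\eta$ into subspaces $\im(\widetilde Q_\tau)$ that are merely $O(\varepsilon)$-close to the $X^\tau$, whereas the theorem asserts the exact equality $X_\eta = \bigoplus_{\tau\subseteq\eta}X^\tau$ for the specific subspaces $X^\tau = X_\tau\cap\bigcap_{\tau'\subsetneqq\tau}\Ker(P_{\tau'})$. No choice of $\varepsilon$ closes this gap, because the error is in \emph{which} subspaces you decompose into, not in the size of a remainder.

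The paper's resolution is the piece you are missing. It inductively constructs \emph{exact} projections $R_\tau$ onto $X^\tau$ for $\vert\tau\vert<\vert\eta\vert$, and at level $\eta$ replaces the finite product by the infinite alternating sum $L_\eta=\sum_{x\in V}R(x)$ over reduced words $(-1)^j R_{\tau_j}\cdots R_{\tau_1}$ (indexed by paths in a labelled tree), which converges absolutely precisely because $\Vert R_\tau R_{\tau'}\Vert\le C_{i-1}^2\varepsilon$ for $\tau\neq\tau'$. The point of the infinite sum is that it telescopes to give the \emph{exact} identities $R_\tau L_\eta=0$ and $L_\eta R_\tau=0$ for every $\tau\subsetneqq\eta$; Lemma \ref{In X^eta lemma} (which is where consistency is used) then upgrades ``$R_\tau v=0$ for all $\tau\subsetneqq\eta$'' to ``$v\in X^\eta$'', so $R_\eta=L_\eta P_\eta$ is an honest projection onto $X^\eta$ and the identity $L_\eta=I-\sum_{\tau\subsetneqq\eta}R_\tau\sum_{x\in V\setminus E_\tau}R(x)$ exhibits every $v\in X_\eta$ as a sum of elements of the true $X^\tau$'s. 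If you want to salvage your version, you must replace the finite product by such a convergent series (equivalently, a Neumann-type inverse in the construction of the projection itself, not only in the directness step), and you must run the induction on exact projections $R_\tau$ rather than on products of the $P_\tau$'s.
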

The proof of this theorem is based on a theorem similar to our Theorem \ref{small angle theorem} that appears in \cite{DymaraJ}[section 11] and the proof given there applies almost verbatim is our setting. We will repeat the proof below for completeness, but we claim no originality here.

\begin{lemma}
\label{In X^eta lemma}
Let $X$, $\triangle$, $P_\triangle$ and $P_\sigma$ for $\sigma \in \triangle (n-1)$ defined as above. Assume that the projections $P_\sigma$ for $\sigma \in \triangle (n-1)$ are consistent and that for every $\tau \subseteq \triangle$, $P_\tau$ exists. 

Fix $0 \leq i \leq n+1$ and assume that for every $\tau \subseteq \triangle$ with $\vert \tau \vert <i$ there is a projection $R_\tau: X \rightarrow X$ on $X^\tau$ such that $R_\tau = R_\tau P_\tau$. Then for every $\eta \subseteq \triangle$ with $\vert \eta \vert =i$ the following holds for every $v \in X_\eta$:
$$v \in X^\eta \Leftrightarrow \forall \tau \subsetneqq \eta, R_\tau v = 0.$$  
\end{lemma}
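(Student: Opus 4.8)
The plan is to prove both directions of the equivalence. The forward implication is essentially a tautology: if $v \in X^\eta$, then by definition $v \in \Ker(P_\tau)$ for every $\tau \subsetneqq \eta$, and since $R_\tau = R_\tau P_\tau$ we get $R_\tau v = R_\tau P_\tau v = 0$. So the content is in the reverse implication: given $v \in X_\eta$ with $R_\tau v = 0$ for all $\tau \subsetneqq \eta$, I must show $v \in X^\eta$, i.e. that $P_\tau v = 0$ for every $\tau \subsetneqq \eta$.

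First I would fix such a $v$ and run an induction on $|\tau|$ over the proper faces $\tau \subsetneqq \eta$, aiming to show $P_\tau v = 0$ for each of them. The base case is $\tau = \emptyset$: here $X^\emptyset = X_\emptyset$ and $R_\emptyset = P_\emptyset$, so $R_\emptyset v = 0$ gives $P_\emptyset v = 0$ directly. For the inductive step, fix $\tau \subsetneqq \eta$ with $|\tau| = j$ and suppose $P_{\tau'} v = 0$ for all $\tau' \subsetneqq \tau$. Consider the vector $w := P_\tau v$. By the inductive hypothesis together with consistency (which gives $P_{\tau'} P_\tau = P_{\tau'}$ for $\tau' \subseteq \tau$), we have $P_{\tau'} w = P_{\tau'} P_\tau v = P_{\tau'} v = 0$ for all $\tau' \subsetneqq \tau$; also $w \in X_\tau$. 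Hence $w \in X^\tau$. Now apply the projection $R_\tau$: since $R_\tau$ is a projection onto $X^\tau$, $R_\tau w = w$; on the other hand $R_\tau w = R_\tau P_\tau v = R_\tau v = 0$ by hypothesis (using $R_\tau = R_\tau P_\tau$). Therefore $w = P_\tau v = 0$, completing the induction. Since $P_\tau v = 0$ for every $\tau \subsetneqq \eta$ and $v \in X_\eta$, we conclude $v \in X^\eta$.

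The step I expect to require the most care is the inductive step of the reverse direction — specifically, verifying that $w = P_\tau v$ actually lands in $X^\tau$, which is where consistency is indispensable: without $P_{\tau'} P_\tau = P_{\tau'}$ one cannot transfer the vanishing $P_{\tau'} v = 0$ through the projection $P_\tau$. One should double-check that consistency as stated (for $P_\tau, P_\eta$ with $\tau \subseteq \eta \subsetneqq \triangle$, $P_\tau P_\eta = P_\tau$ whenever both exist) does cover all the pairs $\tau' \subseteq \tau$ needed here; since $\tau \subsetneqq \eta \subseteq \triangle$ and all the relevant $P_{\tau'}$, $P_\tau$ exist by hypothesis, this is fine. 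The remaining points — that $R_\tau$ is genuinely a projection onto $X^\tau$ so that $R_\tau w = w$ for $w \in X^\tau$, and the bookkeeping over the lattice of faces — are routine.
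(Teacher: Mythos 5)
Your proposal is correct and follows essentially the same route as the paper: the forward direction is the same one-line computation, and the reverse direction rests on the identical core step of using consistency to show $P_\tau v \in X^\tau$ and then combining $R_\tau P_\tau v = R_\tau v = 0$ with the fact that $R_\tau$ is the identity on $X^\tau$. The only (harmless) difference is organizational: you induct on $\vert \tau \vert$ for a fixed $v$ and verify $P_\tau v \in X^\tau$ directly from the definition of $X^\tau$, whereas the paper inducts on $\vert \eta \vert$ and invokes the lemma itself at the lower level to reach the same conclusion.
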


\begin{proof}
Assume first that $v \in X^\eta$, then by definition for every $\tau \subsetneqq \eta$, $v \in \Ker (P_\tau)$. By assumptions of the lemma $R_\tau P_\tau = R_\tau$ and therefore $R_\tau v =  R_\tau P_\tau v = 0$. 

In the other direction we will use induction on $\vert \eta \vert$. For $\vert \eta \vert =0$, $X^\emptyset=X_\emptyset$ and therefore the assertion of the lemma holds. Fix $0 <i < n+1$ and assume the lemma is true for every $\tau \subseteq \triangle$ with $\vert \tau \vert <i$. Fix $\eta \subseteq \triangle$ with $\vert \eta \vert =i$ and fix $v \in X_\eta$ such that for every $\tau \subsetneqq \eta$, $R_\tau v = 0$.  Let $\tau \subsetneqq \eta$ arbitrary. By the assumptions of the lemma for every $\tau ' \subsetneqq \tau$ the following holds: 
$$R_{\tau '} P_\tau v = R_{\tau '} P_{\tau '} P_\tau v.$$ 
By the consistency assumption (and remark \ref{P-triangle always commute remark}), $ P_{\tau '} P_{\tau} = P_{\tau'}$ and therefore
$$R_{\tau'} P_{\tau} v  = R_{\tau '} P_{\tau '} v = R_{\tau '} v =0.$$
By the induction assumption, we conclude that $P_\tau v \in X^\tau$. We also assumed that $R_\tau P_\tau v = R_\tau v=0$, therefore this yields that $P_\tau v =0$. We showed that for every $\tau \subsetneqq \eta$, $v \in \Ker (P_\tau)$ which implies that $v \in X^\eta$. 
\end{proof}

We will use the above lemma to prove Theorem \ref{space decomposition}:

\begin{proof}
Let $\varepsilon >0$ to be determined later and let $\gamma>0$ be the constant corresponding to $\varepsilon >0$ given by Theorem \ref{small angle theorem}. 

We shall prove that if $\varepsilon>0$ is small enough, then for each $0 \leq i \leq (n+1)$, there is a constant $C_i$ such that the following holds:
\begin{enumerate}
\item For each $\eta \subseteq \triangle$ with $\vert \eta \vert \leq i$, there is a projection $R_\eta : X \rightarrow X$ on $X^\eta$ such that $R_\eta P_\eta = R_\eta$ and $\Vert R_\eta \Vert \leq C_i$.
\item For every $0 \leq j \leq i$, $C_i \geq C_j$.
\item For every $\eta, \eta ' \subseteq \triangle$ such that $\eta \neq \eta'$ and $\vert \eta \vert, \vert \eta' \vert \leq i$, we have that $\Vert R_\eta R_{\eta'} \Vert \leq (C_{i})^2 \varepsilon$. 
\item For each $\eta \subseteq \triangle$ with $\vert \eta \vert =i$, $X_\eta =  \bigoplus_{\tau \subseteq \eta} X^\tau.$
\end{enumerate} 
The cases $i=0, i=1$ are straightforward:

For $i = 0$, we have that if $\vert \eta \vert = 0$, then $\eta = \emptyset$. Take $R_\emptyset = P_\emptyset$ and $C_0 = 4(n+1)+2$. We will check that for this choice conditions 1.-4. hold:
\begin{enumerate}
\item Note that 
$$R_\emptyset P_\emptyset =P_\emptyset P_\emptyset = P_\emptyset = R_\emptyset.$$
Also by Theorem \ref{small angle theorem}, $\Vert R_\emptyset \Vert \leq C_0$.
\item Holds vacuously.
\item Holds vacuously.
\item $X_\emptyset = X^\emptyset$. 
\end{enumerate}
 Also, by definition $X_\emptyset = X^\emptyset$.

For $i=1$, for $\eta \subseteq \triangle$ with $\vert \eta \vert =1$, take $R_\eta = P_\eta - P_\emptyset$ and $C_1 = 2 (4(n+1)+2)$. We will check that for this choice conditions 1.-4. hold:
\begin{enumerate}
\item Note that 
$$R_\eta = P_\eta - P_\emptyset = (I-P_\emptyset) P_\eta = (I-P_\emptyset) P_\eta  P_\eta = R_\eta P_\eta.$$
Also, by Theorem \ref{small angle theorem}, 
$$\Vert R_\eta \Vert \leq \Vert P_\eta \Vert + \Vert P_\emptyset \Vert \leq C_1.$$
\item $C_1 = 2 C_0 \geq C_0$.
\item Let $\eta, \eta' \subseteq \triangle$ such that $\vert \eta \vert, \vert \eta ' \vert \leq 1$ and $\eta \neq \eta '$. If $\eta = \emptyset$ or $\eta ' = \emptyset$, then $R_\eta R_{\eta'} =0$. If $\vert \eta \vert = \vert \eta ' \vert =1$, then $\eta \cap \eta' =\emptyset$ and
$$\Vert  R_\eta R_{\eta'} \Vert = \Vert (P_\eta - P_\emptyset) (P_{\eta '}  - P_\emptyset ) \Vert = \Vert P_\eta P_{\eta '} -P_{\eta \cap \eta ' } \Vert \leq \varepsilon \leq C_1^2 \varepsilon,$$
as needed. 
\item For every $\eta \subseteq \triangle$, such that $\vert \eta\vert =1$, $P_\eta - P_\emptyset$ is a projection on $X^\eta$ and therefore $X_\eta = X^\eta \oplus X^\emptyset$.
\end{enumerate} 

We proceed by induction. Let $i >1$ and assume that $(1),(2),(3),(4)$ above hold for every $j<i$. 

\textbf{Step 1 (proof of conditions 1.,2.):} Let $\eta \subseteq \triangle$ with $\vert \eta \vert = i$. We will show that $X_\eta$ is a sum of $X^\tau$ with $\tau \subseteq \eta$ and in doing so, we will find a projection operator $R_\eta : X \rightarrow X$ such that $\im (R_\eta) = X^\eta$ and $R_\eta P_\eta = R_\eta$. 

Let $d = 2^i-2$ and consider the $(d+1)$-valent tree such that each edge is labelled by some $\tau \subsetneqq \eta$ and no two edges with the same label meet at a vertex. Fix a vertex $x_0$ to be the root of this tree. Then for every vertex $x_j$ with distance $j >0$ from $x_0$ there is a path labelled $\tau_1,...,\tau_j$ from $x_0$ to $x_j$. For such $x_j$, define and operator $R(x_j) = (-1)^j R_{\tau_j} ... R_{\tau_1}$ and define $R(x_0) = I$. 
Denote the vertices of the tree by $V$ and define 
$$L_\eta = \sum_{x \in V} R(x).$$
Let $x_j$ be a vertex with distance $j>0$ from $x_0$. By the induction assumption $(3)$ we have that $\Vert R(x_j) \Vert \leq (C_{i-1}^2 \varepsilon )^{j-1} C_{i-1}$. Therefore if we choose $\varepsilon \leq \frac{1}{2 d C_{i-1}^{2}}$, then for every $v \in X_\eta$, $ \sum_{x} R(x) v$ is absolutely convergent:
$$\sum_{x} \Vert R(x) v \Vert = (1+ (d+1) C_{i-1} \sum_{j=1}^ \infty (d C_{i-1}^2 \varepsilon )^{j-1} ) \Vert v \Vert \leq (1 + 2(d+1) C_{i-1}) \Vert v \Vert.$$
Therefore $L_\eta$ is well defined if $\varepsilon$ is sufficiently small. For every $\tau \subsetneqq \eta$, denote
$$B_\tau = \lbrace x \in V \setminus \lbrace x_0 \rbrace \text{ such that the path from } x_0 \text{ to } x \text{ begins with } \tau \rbrace, $$
$$E_\tau = \lbrace x \in V \setminus \lbrace x_0 \rbrace \text{ such that the path from } x_0 \text{ to } x \text{ ends with } \tau \rbrace. $$
Then for a every $\tau \subsetneqq \eta$, we have that 
\begin{dmath*}
L_\eta = \sum_{x \in E_\tau} R(x) +\sum_{x \in V \setminus E_\tau} R(x) = -R_\tau (\sum_{x \in V \setminus E_\tau} R(x)) + \sum_{x \in V \setminus E_\tau} R(x).  
\end{dmath*}
Therefore, for every $\tau \subsetneqq \eta$, $R_\tau L_\eta = 0$ and therefore by Lemma \ref{In X^eta lemma} above, for every $v \in X_\eta$, $L_\eta v \in X^\eta$. This shows that $\im (L_\eta) \subseteq X^\eta$. To see that $\im (L_\eta) = X^\eta$, notice that for every $v \in X^\eta$ and for every $\tau \subsetneqq \eta$, $R_\tau v=0$ and therefore by the definition of $L_\eta$, $L_\eta v =v$. 

We will take $L_\eta P_\eta$ as our candidate for $R_\eta$ and take $C_i = (4(n+1)+2) (1 + 2(d+1) C_{i-1})$ as a bound on $\Vert R_\eta \Vert$ (we showed above that $\Vert L_\eta \Vert \leq 1 + 2(d+1) C_{i-1}$). Notice that $C_i$ was chosen such that $C_i \geq C_{i-1}$ as needed. It is clear that taking $R_\eta = L_\eta P_\eta$ implies that $R_\eta P_\eta = R_\eta$.

To show that $R_\eta$ is indeed a projection, notice first that for every $\tau \subsetneqq \eta$, we have that
\begin{dmath*}
L_\eta = \sum_{x \in B_\tau} R(x) +\sum_{x \in V \setminus B_\tau} R(x) = - (\sum_{x \in V \setminus B_\tau} R(x)) R_\tau + \sum_{x \in V \setminus B_\tau} R(x).  
\end{dmath*}
Therefore, for every $\tau \subsetneqq \eta$, $L_\eta R_\tau = 0$. Second, notice that
\begin{dmath*}
L_\eta = I+ \sum_{\tau \subsetneqq \eta} \sum_{x \in E_\tau} R(x) = I- \sum_{\tau \subsetneqq \eta} R_\tau \sum_{x \in V \setminus E_\tau} R(x).
\end{dmath*}
Therefore 
$$L_\eta^2 = L_\eta (I- \sum_{\tau \subsetneqq \eta} R_\tau \sum_{x \in V \setminus E_\tau} R(x)) = L_\eta -\sum_{\tau \subsetneqq \eta} L_\eta R_\tau \sum_{x \in V \setminus E_\tau} R(x) = L_\eta.$$
This yields that $R_\eta^2 = R_\eta$. 

The same computation also shows that $X_\eta$ is a linear sum of $X^\tau$ with $\tau \subseteq \eta$. First, for every $v \in X_\eta$ we showed that $L_\eta v \in X^\eta$. Second, if we denote for every $\tau \subsetneqq \eta$, 
$$v^\tau = R_\tau \sum_{x \in V \setminus E_\tau} R(x) v,$$ 
then $v^\tau \in X^\tau$. Last, we showed above that 
$$L_\eta = I- \sum_{\tau \subsetneqq \eta} R_\tau \sum_{x \in V \setminus E_\tau} R(x),$$
and this yields that for every $v \in X_\eta$, 
$$v = \sum_{\tau \subsetneqq \eta} v^\tau + L_\eta v,$$
as needed.

\textbf{Step 2 (proof of condition 3.):} We will show that for every $\eta, \eta' \subseteq \triangle$, with $\vert \eta \vert, \vert \eta ' \vert \leq i$ and $\eta \neq \eta'$, we have that $\Vert R_\eta R_{\eta '} \Vert \leq (C_{i})^2 \varepsilon$. We'll split the proof of this fact into several cases.

In the case that $\eta \cap \eta ' \subsetneqq \eta '$, notice that $\im (P_{\eta '}) \cap \Ker (P_{\eta \cap \eta '}) \subseteq \im (R_{\eta '})$ and therefore
$$R_\eta  R_{\eta '} = R_\eta P_\eta P_{\eta'} (I- P_{\eta \cap \eta'}) R_{\eta '}.$$
This yields that 
$$\Vert R_\eta  R_{\eta '} \Vert \leq \Vert R_\eta \Vert \Vert R_{\eta '} \Vert \cos (\angle (P_\eta, P_{\eta ' } )) \leq C_{\vert \eta \vert} C_{\vert \eta ' \vert} \varepsilon  \leq (C_i)^2 \varepsilon,$$
as needed.

In the case that $\eta \subseteq \eta'$, we have that $\im (R_{\eta '}) \subseteq \Ker (P_\eta)$ and therefore 
$$R_\eta R_{\eta '} = R_\eta P_\eta R_{\eta '} = 0.$$

In the case that $\eta ' \subseteq \eta$ and $\vert \eta \vert \leq i-1, \vert \eta ' \vert \leq i-1$, the inequality follows from the induction assumption.

We are left with the case in which $\vert \eta \vert =i$ and $\eta ' \subseteq \eta$. In this case, by step 1 above, $L_\eta R_{\eta '} =0$ and therefore $R_\eta R_{\eta '} =0$ and we are done.

\textbf{Step 3 (proof of condition 4.):} We will finish by showing that given that $\varepsilon >0$ is small enough, 
$$X_\eta = \bigoplus_{\tau \subseteq \eta} X^\tau.$$
We already showed in step 1, that $X_\eta$ is a linear sum of $X^\tau$ such that $\tau \subseteq \eta$. Assume there are $v^\tau \in X^\tau$ such that 
$$\sum_{\tau \subseteq \eta} v^\tau =0.$$
Let $\tau'$ be such that for every $\tau \subseteq \sigma$, $\Vert v^{\tau'} \Vert \geq \Vert v^{\tau} \Vert$. Then $R_{\tau'} (\sum_{\tau \subseteq \eta} v^\tau) =0$. Using the bound on the norm of $\Vert R_{\tau'} R_{\tau} \Vert$ established in step 2, this yields
\begin{dmath*}
0= \Vert R_{\tau'} (\sum_{\tau \subseteq \eta} v^\tau) \Vert \geq \Vert v^{\tau'} \Vert - \Vert \sum_{\tau \subseteq \eta, \tau \neq \tau '} R_{\tau'} v^\tau  \Vert \geq  \\
\Vert v^{\tau'} \Vert - \sum_{\tau \subseteq \eta, \tau \neq \tau'} \Vert R_{\tau'} R_{\tau} v^\tau \Vert \geq \\
\Vert v^{\tau'} \Vert - (C_i)^2 \varepsilon \Vert v^{\tau'} \Vert = \Vert v^{\tau'} \Vert (1-(2^{i}-1) (C_i)^2 \varepsilon).
\end{dmath*}
Therefore, if $\varepsilon$ is chosen such that $\varepsilon < \frac{1}{(2^{i}-1) (C_i)^2}$, we get that $\Vert v^{\tau'} \Vert =0$ and therefore $v^\tau =0$ for every $\tau \subseteq \eta$. This yields  
$$X_\eta = \bigoplus_{\tau \subseteq \eta} X^\tau,$$
as needed.
\end{proof} 

\begin{remark}
Note that in the above proof, the consistency assumption is crucial in  the proof of Lemma \ref{In X^eta lemma} which in turn was crucial for step 1 of the above proof.
\end{remark}

\section{Vanishing of cohomology}

Let $\Sigma$ be a pure $n$-dimensional infinite simplicial complex and let $G < Aut (\Sigma)$ be a closed subgroup. Assume that $(\Sigma, G)$ satisfies conditions $(\mathcal{B} 1)-(\mathcal{B} 4)$ defined in subsection \ref{Groups acting on simplicial complexes subscetion} above. Assume further that all the $1$-dimensional links of $\Sigma$ are compact. Fix a chamber $\triangle \in \Sigma (n)$. Let $\mu$ be the Haar measure on $G$. For $-1 \leq i \leq n$, denote $\triangle (i)$ to be the $i$-dimensional simplices of $\triangle$. 
For $\sigma \in (\triangle (n) \cup \triangle (n-1) \cup \triangle (n-2))$ define $k_\sigma \in C_c (G)$ as
$$k_\sigma = \frac{\chi_{G_\sigma}}{\mu (G_\sigma)},$$
where $\chi_{G_\sigma}$ is the indicator function on $G_\sigma$ (note that by our assumptions $G_\sigma$ is a compact group). Observe that 
\begin{itemize}
\item For $\sigma, \tau \in (\triangle (n) \cup \triangle (n-1) \cup \triangle (n-2))$, if $\tau \subset \sigma$, then $k_\tau k_\sigma = k_\tau$.
\item For any continuous representation $\pi$ of $G$ on a Banach space $X$ and any $\sigma \in (\triangle (n) \cup \triangle (n-1) \cup \triangle (n-2))$, $\pi (k_\sigma)$ is a projection on the $X^{\pi (G_\sigma)}$ (recall that $X^{\pi (G_\sigma)}$ is the subspace of vectors fixed by $G_\sigma$).
\end{itemize}

These observations yields that for any two $\sigma, \sigma ' \in \triangle (n-1)$ and any representation $\pi$ of $G$, we can define the cosine of the angle between $\pi (k_\sigma)$ and $\pi (k_{\sigma'})$ as in definition \ref{angle between projections definition} above:
\begin{dmath*}
{\cos (\angle (\pi (k_\sigma), \pi (k_{\sigma'}))) =} \\
{ \max \lbrace \Vert \pi (k_\sigma) \pi ( k_{\sigma '}) - \pi (k_{\sigma \cap \sigma '} ) \Vert, \Vert \pi (k_{\sigma'}) \pi ( k_{\sigma }) - \pi (k_{\sigma \cap \sigma '} ) \Vert \rbrace =} \\
\max \lbrace \Vert \pi (k_\sigma k_{\sigma '} - k_{\sigma \cap \sigma '} ) \Vert, \Vert \pi (k_{\sigma'} k_{\sigma } - k_{\sigma \cap \sigma '} ) \Vert \rbrace.
\end{dmath*}

Therefore we are in the setting of Theorem \ref{space decomposition}. Applying Theorem \ref{space decomposition} combined with Theorem \ref{general vanishing of cohomology based on decomposition} yields the following:
\begin{theorem}
\label{vanishing of cohomology by conditions on the representations}
Let $\Sigma$ be a pure $n$-dimensional infinite simplicial complex and let $G < Aut (\Sigma)$ be a closed subgroup. Assume that $(\Sigma, G)$ satisfy conditions $(\mathcal{B} 1)-(\mathcal{B} 4)$ and that there is $l \in \mathbb{N}$ such that all the $l$-dimensional links of $\Sigma$ are compact. Then there are constants $\gamma = \gamma (n) >0$, $\beta = \beta (n) >1$ such that for every representation $\pi$ of $G$ on a Banach space, if
$$\sup_{\sigma \in \triangle (n-1)} \Vert \pi (k_\sigma) \Vert \leq \beta , \sup_{\sigma, \sigma ' \in \triangle (n-1)} \cos (\angle (\pi (k_\sigma), \pi (k_{\sigma '}))) \leq \gamma,$$
and the projections $\pi (k_\sigma)$ with $\sigma \in \triangle (n-1)$ are consistent, then 
$$H^* (G, \pi) = \bigoplus_{\eta \subseteq \triangle} \widetilde{H}^{*-1} (D_\eta; X^{\eta}),$$
and 
$$H^i (G,\pi) = 0 \text{ for } i=1,...,l.$$
\end{theorem}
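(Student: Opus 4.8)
The plan is to realize the present setup as an instance of Theorem~\ref{space decomposition} and then feed the resulting decomposition of $X$ into Theorem~\ref{general vanishing of cohomology based on decomposition}. First I would set $P_\sigma = \pi(k_\sigma)$ for every $\sigma \in \triangle(n) \cup \triangle(n-1) \cup \triangle(n-2)$; by the observations preceding the statement each $P_\sigma$ is a projection onto $X^{\pi(G_\sigma)}$, and $k_\tau k_\sigma = k_\tau$ whenever $\tau \subseteq \sigma$, so in particular $P_\sigma P_\triangle = P_\sigma$ for $\sigma \in \triangle(n-1)$ and we are in the situation of Definition~\ref{simplex projections definition}. Since $n > 1$, two distinct faces $\sigma, \sigma' \in \triangle(n-1)$ meet in a face $\tau := \sigma \cap \sigma' \in \triangle(n-2)$, and $G_\tau$ is generated by $G_\sigma$ and $G_{\sigma'}$ (a standard consequence of gallery connectedness of the $1$-dimensional link $\Sigma_\tau$ together with $(\mathcal{B}4)$, as in \cite{DymaraJ}); hence $X^{\pi(G_\tau)} = X^{\pi(G_\sigma)} \cap X^{\pi(G_{\sigma'})}$ and $\pi(k_\tau)$ is a projection onto $\im(P_\sigma) \cap \im(P_{\sigma'})$ satisfying $\pi(k_\tau)P_\sigma = \pi(k_\tau) = \pi(k_\tau)P_{\sigma'}$. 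In particular $\cos(\angle(P_\sigma, P_{\sigma'}))$, computed with $P_{\sigma\cap\sigma'} = \pi(k_\tau)$, is exactly the quantity bounded by $\gamma$ in the hypothesis.

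Next I would verify the three hypotheses of Theorem~\ref{space decomposition}. Consistency of $\{P_\sigma : \sigma \in \triangle(n-1)\}$ is assumed, and the norm bound $\max_\sigma \Vert P_\sigma\Vert \le \beta_0$ is the hypothesis on $\beta$ once we take $\beta = \beta_0$, the relevant constant of Corollary~\ref{Quick uniform convergence corollary}. For the existence of $P_\tau$ with $\tau \in \triangle(n-2)$: writing $\sigma_0, \sigma_1$ for the two codimension-$1$ faces of $\triangle$ containing $\tau$, we have $T_\tau = \tfrac{1}{2}(P_{\sigma_0} + P_{\sigma_1})$, and by also requiring $\gamma \le \gamma_0$, Corollary~\ref{Quick uniform convergence corollary} gives that $T_\tau^i$ converges in operator norm to a projection onto $\im(P_{\sigma_0}) \cap \im(P_{\sigma_1})$; since $\pi(k_\tau)$ is such a projection with $\pi(k_\tau)P_{\sigma_j} = \pi(k_\tau)$, Proposition~\ref{canonical proposition} forces the limit to equal $\pi(k_\tau)$, so $P_\tau$ exists. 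Taking $\gamma = \gamma(n)$ to be the minimum of $\gamma_0$ and the threshold produced by Theorem~\ref{space decomposition}, that theorem then yields that $P_\eta$ exists for every $\eta \subseteq \triangle$ and that $\im(P_\eta) = \bigoplus_{\tau \subseteq \eta} X^\tau$, where $X^\eta = \im(P_\eta) \cap \bigcap_{\tau \subsetneqq \eta} \Ker(P_\tau)$.

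It remains to translate this into the hypotheses of Theorem~\ref{general vanishing of cohomology based on decomposition}, which requires, for each $\eta \subseteq \triangle$, a projection onto $X^{\pi(G_\eta)}$ together with the decomposition $X^{\pi(G_\eta)} = \bigoplus_{\tau \subseteq \eta} X^\tau$. For $\eta = \triangle$ take $P_\triangle = \pi(k_\triangle)$; for $\eta \subsetneqq \triangle$ the projection $P_\eta$ from Theorem~\ref{space decomposition} has image $\bigcap_{\sigma \in \triangle(n-1),\, \eta \subseteq \sigma} \im(P_\sigma) = \bigcap_{\sigma \in \triangle(n-1),\, \eta \subseteq \sigma} X^{\pi(G_\sigma)}$, and this subspace equals $X^{\pi(G_\eta)}$ because $G_\eta$ is generated by the subgroups $G_\sigma$ with $\sigma \in \triangle(n-1)$ and $\eta \subseteq \sigma$ --- the same consequence of $(\mathcal{B}1)$, $(\mathcal{B}2)$ and $(\mathcal{B}4)$ used in \cite{DymaraJ}. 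With $X_\eta = \im(P_\eta) = X^{\pi(G_\eta)}$ identified, the equality $X_\eta = \bigoplus_{\tau \subseteq \eta} X^\tau$ is precisely the hypothesis of Theorem~\ref{general vanishing of cohomology based on decomposition}, so $H^*(G,\pi) = \bigoplus_{\eta \subseteq \triangle} \widetilde{H}^{*-1}(D_\eta; X^\eta)$, and its final assertion, applied with the given $l$, gives $H^i(G,\pi) = 0$ for $i = 1,\dots,l$.

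The main obstacle I anticipate is the bookkeeping of constants in the middle step: one must pin down the single pair $(\beta,\gamma)$, depending only on $n$, so that it simultaneously makes the two-term averaging converge at every $\tau \in \triangle(n-2)$ (so the ``$P_\tau$ exists'' hypothesis of Theorem~\ref{space decomposition} holds) and clears the thresholds internal to Theorem~\ref{space decomposition} itself. Apart from this, the only input not already packaged in the quoted theorems is the identification $\bigcap_{\sigma \in \triangle(n-1),\, \eta \subseteq \sigma} X^{\pi(G_\sigma)} = X^{\pi(G_\eta)}$, which follows from gallery connectedness of the relevant links exactly as in the original Hilbert-space treatment.
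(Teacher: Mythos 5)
Your proposal is correct and follows essentially the same route as the paper: set $P_\sigma = \pi(k_\sigma)$, verify the hypotheses of Theorem~\ref{space decomposition} (with $\beta = \beta_0$ and $\gamma$ at most the thresholds of Corollary~\ref{Quick uniform convergence corollary} and Theorem~\ref{space decomposition}), and feed the resulting decomposition into Theorem~\ref{general vanishing of cohomology based on decomposition}. In fact you spell out two points the paper's one-line verification leaves implicit --- the identification of $T_\tau^\infty$ with $\pi(k_\tau)$ via Proposition~\ref{canonical proposition}, and the equality $\bigcap_{\sigma \supseteq \eta} X^{\pi(G_\sigma)} = X^{\pi(G_\eta)}$ coming from gallery connectedness --- both of which are indeed needed and correctly justified.
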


\begin{proof}
Denote $P_\sigma = \pi (k_\sigma)$ for $\sigma \in \triangle (n-2) \cup \triangle (n-1) \cup \triangle (n)$. Let $\beta = \beta_0>1$ and $\gamma$ as in Theorem \ref{space decomposition}. The assumptions on $\pi$ grantee that the $P_\sigma$'s fulfil the conditions of Theorem \ref{space decomposition}.

Therefore for every $\eta \subseteq \triangle$, $X_\eta = \bigoplus_{\tau \subseteq \eta} X^\tau$. The vanishing of cohomology follows from Theorem \ref{general vanishing of cohomology based on decomposition}.

Note that the constants $\gamma$, $\beta$ depend only on the dimension $n$ (and not on any other characteristics of $\Sigma$).
\end{proof}

We will show that there are sufficient conditions that grantee the fulfilment of the conditions of the theorem above in a class of representations $\mathcal{F}_0 (\overline{\mathcal{E}_3 (\mathcal{E}_2 (\mathcal{E}_1 (r, C_1),\theta_2),C_3)}, G, s_0)$ defined in the introduction for suitable choices of $s_0>0$ and $r$. We start by recalling the following result from \cite{ORobust} that connects the Schatten norm of the projection operators to condition $(\mathcal{B}_{\delta, r})$ defined above:

\begin{lemma}\cite{ORobust}[Corollary 4.20]
\label{condition r, delta to schatten norm lemma}
Let $\Sigma$ be a pure $n$-dimensional infinite simplicial complex and let $G < Aut (\Sigma)$ be a closed subgroup. Assume that $(\Sigma, G)$ satisfy conditions $(\mathcal{B} 1)-(\mathcal{B} 4)$ and condition $(\mathcal{B}_{\delta, r})$, then for every $\sigma, \sigma' \in \triangle (n-1)$, 
$$\Vert \lambda (k_\sigma k_{\sigma' } - k_{\sigma \cap \sigma'}) \Vert_{S^r} \leq \delta,$$
where  $\lambda \in B(L^2 (G_{\sigma \cap \sigma'}, \mu))$ is the left regular representation.
\end{lemma}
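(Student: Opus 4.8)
The plan is to reduce everything to an explicit operator on $\mathcal{H}=L^2(G_\eta,\mu)$, where $\eta=\sigma\cap\sigma'\in\triangle(n-2)$ (note that $k_\sigma$, $k_{\sigma'}$, $k_\eta$, and hence all their convolutions, are supported in the compact subgroup $G_\eta$, so everything indeed lives on $\mathcal H$), and then to read off its Schatten $r$-norm from the spectrum of the normalized Laplacian of the link $\Sigma_\eta$. Write $P=\lambda(k_\sigma)$, $Q=\lambda(k_{\sigma'})$, $E=\lambda(k_\eta)$; since $G_\sigma,G_{\sigma'},G_\eta$ are compact symmetric subgroups and $k_\sigma,k_{\sigma'},k_\eta$ are the associated normalized idempotents, $P,Q,E$ are orthogonal projections on $\mathcal H$ onto, respectively, the $G_\sigma$-, $G_{\sigma'}$- and $G_\eta$-invariant vectors, the last space being just the constants. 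Since $\im E\subseteq\im P\cap\im Q$ we have $PE=EP=E$ and $QE=EQ=E$, and since $\lambda(a*b)=\lambda(a)\lambda(b)$ for $a,b\in C_c(G_\eta)$ we obtain the identity
\[
\lambda(k_\sigma k_{\sigma'}-k_\eta)=PQ-E=(P-E)(Q-E),
\]
so it suffices to bound $\Vert(P-E)(Q-E)\Vert_{S^r}$.

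Next I would set up the dictionary with the link. The $1$-dimensional complex $\Sigma_\eta$ has vertex set the $(n-1)$-faces of $\Sigma$ containing $\eta$ and edge set the chambers containing $\eta$; by $(\mathcal B4)$, $G_\eta$ acts transitively on these chambers and preserves the two types of vertices (the faces $\sigma,\sigma'$ of $\triangle$ containing $\eta$ lie in distinct $G$-orbits, since $\Sigma\to\Sigma/G$ is injective on $\triangle$), so $G_\eta$ acts transitively on each side $V_{\eta,1},V_{\eta,2}$ of this bipartite graph, while by $(\mathcal B2)$ the graph is connected. The vertex stabilizers at $\sigma$ and $\sigma'$ are $G_\sigma$ and $G_{\sigma'}$, so $\im P$ is $G_\eta$-equivariantly isometric to the (uniformly) weighted $\ell^2(V_{\eta,1})$ and $\im Q$ to $\ell^2(V_{\eta,2})$, with $\im E$ corresponding to the constants in both. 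Under these identifications $P|_{\im Q}\colon\im Q\to\im P$ becomes the normalized bipartite adjacency operator $M\colon\ell^2(V_{\eta,2})\to\ell^2(V_{\eta,1})$ and $Q|_{\im P}$ becomes its adjoint $M^{*}$, whence $PQP|_{\im P}\cong MM^{*}$. Each singular value $\sigma_i\geq0$ of $M$ is an eigenvalue of the normalized adjacency operator of $\Sigma_\eta$, i.e.\ $1-\sigma_i$ is a normalized Laplacian eigenvalue of $\Sigma_\eta$; connectedness forces the singular value $1$ to have multiplicity one (the constants, i.e.\ exactly $\im E$), and any other singular value $\sigma_i<1$ satisfies $1-\sigma_i\geq\kappa(\eta)$, i.e.\ $\sigma_i\leq1-\kappa(\eta)$.

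Now the computation. Since $E$ is the rank-one projection onto the constants and $PE=QE=E$, the positive operator $WW^{*}$ with $W=(P-E)(Q-E)=PQ-E$ equals $PQP|_{\im P}-E\cong MM^{*}$ with its top eigenvalue $1$ deleted; hence its nonzero eigenvalues are the $\sigma_i^{2}$ with $0<\sigma_i\leq1-\kappa(\eta)$, and there are at most $V_{min}(\eta)-1$ of them (comparing $\dim\im P$ and $\dim\im Q$, the rank of $W$ is at most $\min(|V_{\eta,1}|,|V_{\eta,2}|)-1$). Therefore
\[
\Vert\lambda(k_\sigma k_{\sigma'}-k_\eta)\Vert_{S^r}^{r}=\operatorname{Tr}\big((WW^{*})^{r/2}\big)=\sum_i\sigma_i^{r}\leq\big(V_{min}(\eta)-1\big)\big(1-\kappa(\eta)\big)^{r}\leq V_{min}(\eta)\big(1-\kappa(\eta)\big)^{r},
\]
so $\Vert\lambda(k_\sigma k_{\sigma'}-k_\eta)\Vert_{S^r}\leq(1-\kappa(\eta))\,V_{min}(\eta)^{1/r}\leq\delta$ by $(\mathcal B_{\delta,r})$, which is the claim.

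I expect the identification step to be the main obstacle: translating the group-algebra picture (the idempotents $k_\sigma,k_{\sigma'},k_\eta\in C_c(G_\eta)$ and their convolution) into the weighted $\ell^2$-picture of the bipartite link so that $\lambda(k_\sigma)\lambda(k_{\sigma'})$ literally becomes the normalized two-step walk operator, while keeping careful track of the (uniform) degree weights on $V_{\eta,1},V_{\eta,2}$, of the left-versus-right regular representation conventions, and of the normalizations of Haar measure on $G$ versus on the compact subgroups $G_\eta,G_\sigma,G_{\sigma'}$. Once this dictionary is fixed, the relation between the singular values of the bipartite adjacency operator and the spectral gap $\kappa(\eta)$, together with the bound on the rank of $W$ by $V_{min}(\eta)$, are routine spectral graph theory.
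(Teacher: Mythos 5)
Your argument is correct and is essentially the intended one: the paper does not prove this lemma but imports it from \cite{ORobust}[Corollary 4.20], whose proof proceeds exactly as you describe, namely identifying $\lambda(k_\sigma k_{\sigma'}-k_{\sigma\cap\sigma'})=(P-E)(Q-E)$ with the (constants-removed) bipartite walk operator on the finite link $\Sigma_{\sigma\cap\sigma'}$, bounding its operator norm by $1-\kappa(\eta)$ via the normalized Laplacian spectrum and its rank by $V_{min}(\eta)-1$, and concluding $\Vert\cdot\Vert_{S^r}\leq(1-\kappa(\eta))V_{min}(\eta)^{1/r}\leq\delta$, which is precisely what condition $(\mathcal{B}_{\delta,r})$ is designed to give. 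The identification step you flag as the main obstacle is indeed where the work lies, but your treatment of it (edge-transitivity of $G_\eta$ from $(\mathcal{B}4)$, connectedness from $(\mathcal{B}2)$, vertex stabilizers $G_\sigma,G_{\sigma'}$) is sound.
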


Using the above lemma, we are able to deduce arbitrary small angles between all the projections $\pi (k_\sigma)$ and $\pi (\sigma')$ given the condition $(\mathcal{B}_{\delta,r'})$ is fulfilled:

\begin{lemma}
\label{small angle by B-delta,r condition}
Let $\Sigma$ be a pure $n$-dimensional infinite simplicial complex and let $G < Aut (\Sigma)$ be a closed subgroup. Assume that $(\Sigma, G)$ satisfy conditions $(\mathcal{B} 1)-(\mathcal{B} 4)$ and that the $1$-dimensional links of $\Sigma$ are finite.

Let $r > 2, C_1 \geq 1, 1 \geq \theta_2 > 0, C_{3} \geq 1$ be constants. For every $\gamma >0$, $s_0 \geq 0$, $2 \leq r' < r$, there is a $\delta >0$ such that if $(\Sigma, G)$ satisfies condition $(\mathcal{B}_{\delta,r'})$, then for every $\pi \in \mathcal{F} (\overline{\mathcal{E}_3 (\mathcal{E}_2 (\mathcal{E}_1 (r, C_1),\theta_2),C_3)}, G, s_0)$,
$$\sup_{\sigma, \sigma ' \in \triangle (n-1)} \cos (\angle (\pi (k_\sigma), \pi (k_{\sigma '}))) \leq \gamma.$$
\end{lemma}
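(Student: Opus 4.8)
The plan is to reduce the required angle bound to a single operator-norm estimate on the convolution elements $k_\sigma k_{\sigma'} - k_{\sigma\cap\sigma'}$ living inside a compact stabilizer, and then to feed those elements successively through the Schatten-norm input of Lemma \ref{condition r, delta to schatten norm lemma} and the deformation estimate of Corollary \ref{bounding norm of composition of deformations}. Fix $\sigma,\sigma' \in \triangle(n-1)$. If $\sigma = \sigma'$ then $k_\sigma k_\sigma - k_\sigma = 0$ and the cosine is $0$, so I may assume $\sigma \neq \sigma'$; then $\eta := \sigma \cap \sigma'$ lies in $\triangle(n-2)$ and $K := G_\eta$ is compact, since the $1$-dimensional links of $\Sigma$ are finite. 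Because
$$\cos(\angle(\pi(k_\sigma), \pi(k_{\sigma'}))) = \max\{\,\|\pi(k_\sigma k_{\sigma'} - k_\eta)\|,\ \|\pi(k_{\sigma'} k_\sigma - k_\eta)\|\,\}$$
and $k_\sigma, k_{\sigma'}, k_\eta$ are real functions supported on the subgroup $K$, it suffices to bound $\|\pi(f)\|$, uniformly in $\sigma,\sigma'$ and in $\pi$ in the given class, for $f$ either of the two convolution elements above, each a real compactly supported (indeed continuous) function on $K$.

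Restricting $\pi$ to $K$, I would apply Proposition \ref{bounding the norm of pi(f) - proposition}: since $\eta \in \triangle(n-2)$, membership in $\mathcal{F}(\cdot, G, s_0)$ gives $\sup_{g\in K}\|\pi(g)\| \leq e^{s_0}$, so
$$\|\pi(f)\|_{B(X)} \leq e^{2 s_0}\,\|(\lambda\otimes id_X)(f)\|_{B(L^2(K;X))} = e^{2 s_0}\,\|\lambda(f)\otimes id_X\|,$$
where $\lambda$ is the left regular representation of $K$. This isolates the single, $X$-independent operator $T := \lambda(f) \in B(L^2(K,\mu))$, to which I want to apply Corollary \ref{bounding norm of composition of deformations}.

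Its hypotheses hold. First, by Lemma \ref{condition r, delta to schatten norm lemma} applied to $(\Sigma,G)$ under condition $(\mathcal{B}_{\delta, r'})$ (and, by symmetry, to both orders of the pair $\sigma,\sigma'$), $\|T\|_{S^{r'}} \leq \delta$, which is $\leq 1$ provided $\delta \leq 1$. Second, for \emph{every} Banach space $X$ one has $\|T\otimes id_X\| \leq 2 =: L$: indeed $\lambda(k_\sigma)\otimes id_X$, $\lambda(k_{\sigma'})\otimes id_X$ and $\lambda(k_\eta)\otimes id_X$ are each averages over a compact subgroup of $K$ of the isometries $(\lambda\otimes id_X)(h)$ of $L^2(K;X)$, hence contractions, and $\lambda$ is multiplicative for convolution, so $\|\lambda(k_\sigma k_{\sigma'})\otimes id_X - \lambda(k_\eta)\otimes id_X\| \leq 2$. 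Corollary \ref{bounding norm of composition of deformations} then produces a constant $C = C(C_1, r, r')$ with
$$\|\lambda(f)\otimes id_X\| \leq C_3\,L\,(C\,\|\lambda(f)\|_{S^{r'}})^{\theta_2} \leq 2\,C_3\,(C\delta)^{\theta_2}$$
for every $X \in \overline{\mathcal{E}_3(\mathcal{E}_2(\mathcal{E}_1(r, C_1),\theta_2),C_3)}$.

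Combining the two displays gives $\cos(\angle(\pi(k_\sigma), \pi(k_{\sigma'}))) \leq 2\,C_3\,e^{2 s_0}\,(C\delta)^{\theta_2}$, uniformly over $\sigma,\sigma'$ and over $\pi$ in the class; since $\theta_2 > 0$ the right-hand side tends to $0$ with $\delta$, so one fixes $\delta \in (0,1]$ small enough (depending only on $\gamma, s_0, C_1, C_3, \theta_2, r, r'$) to make it $\leq \gamma$, which is the assertion. The one genuinely delicate point, as opposed to routine verification, is the universal bound $L$ above: one must check that the averaging operators stay contractive after tensoring with $id_X$ for an \emph{arbitrary} Banach space $X$, since it is precisely this $X$-independence that permits Corollary \ref{bounding norm of composition of deformations} to be invoked with constants not depending on $X$. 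The remaining care is bookkeeping — tracking that the same operator $T = \lambda(f)$ feeds both the Schatten input and the tensor-norm conclusion, and that the convolutions, though no longer simple functions, still lie in the domain of $\pi$ and of Proposition \ref{bounding the norm of pi(f) - proposition}.
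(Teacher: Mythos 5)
Your proposal is correct and follows essentially the same route as the paper: reduce the angle to $\Vert \pi (k_\sigma k_{\sigma'} - k_{\sigma \cap \sigma'}) \Vert$, pass to $\Vert \lambda(\cdot) \otimes id_X \Vert$ via Proposition \ref{bounding the norm of pi(f) - proposition}, and feed the Schatten bound of Lemma \ref{condition r, delta to schatten norm lemma} together with the universal bound $L=2$ into Corollary \ref{bounding norm of composition of deformations} before choosing $\delta$. The only cosmetic difference is how the bound $L=2$ is justified (you average isometries; the paper passes through the $L^1$ convolution norm), which amounts to the same thing.
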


\begin{proof}
Fix $\pi \in \mathcal{F} (\overline{\mathcal{E}_3 (\mathcal{E}_2 (\mathcal{E}_1 (r, C_1),\theta_2),C_3)}, G, s_0)$. Let $\sigma, \sigma' \in \triangle (n-1)$ be any two different $(n-1)$-dimensional faces of $\triangle$ and assume without loss of generality that
$$\cos (\angle (\pi (k_\sigma), \pi (k_{\sigma '}))) =  \Vert \pi (k_\sigma k_{\sigma '} - k_{\sigma \cap \sigma '} ) \Vert.$$
By Proposition \ref{bounding the norm of pi(f) - proposition}, we have that
$$\Vert \pi (k_\sigma k_{\sigma '} - k_{\sigma \cap \sigma '} ) \Vert \leq e^{2s_0} \Vert \lambda (k_\sigma k_{\sigma '} - k_{\sigma \cap \sigma '} ) \otimes id_X \Vert_{B(L^2 (G_{\sigma \cap \sigma'} ;X))}.$$
Note that for any Banach space $X$, we have that 
$$\Vert \lambda (k_\sigma k_{\sigma '} - k_{\sigma \cap \sigma '} ) \otimes id_X \Vert_{B(L^2 (G_{\sigma \cap \sigma'} ;X))} \leq \Vert \lambda (k_\sigma k_{\sigma '} - k_{\sigma \cap \sigma '} ) \Vert_{B(L^1 (G_{\sigma \cap \sigma'}))} \leq 2.$$
Assuming that $\delta \leq 1$ and applying Lemma \ref{condition r, delta to schatten norm lemma} and Corollary \ref{bounding norm of composition of deformations} (with $L=2$) yields that 
$$ \Vert \lambda (k_\sigma k_{\sigma '} - k_{\sigma \cap \sigma '} ) \otimes id_X \Vert_{B(L^2 (G_{\sigma \cap \sigma'} ;X))} \leq C_3 2 (C \delta)^{\theta_2},$$
where $C=C(C_1,r,r')$ is the constant given in Corollary \ref{bounding norm of composition of deformations}. Therefore, we have that for every $\pi \in \mathcal{F} (\overline{\mathcal{E}_3 (\mathcal{E}_2 (\mathcal{E}_1 (r, C_1),\theta_2),C_3)}, G, s_0)$, 
$$\cos (\angle (\pi (k_\sigma), \pi (k_{\sigma '}))) \leq e^{2s_0} C_3 2 (C \delta)^{\theta_2},$$
and choosing $\delta = \frac{1}{C} (\frac{\gamma}{2e^{2s_0} C_3} )^{\frac{1}{\theta_2}}$ yields the needed inequality.
\end{proof}

The implication of the above lemma is that when applying Theorem \ref{vanishing of cohomology by conditions on the representations} on a class of representations of the form $\mathcal{F} (\overline{\mathcal{E}_3 (\mathcal{E}_2 (\mathcal{E}_1 (r, C_1),\theta_2),C_3)}, G, s_0)$, one can replace the condition 
$$\sup_{\sigma, \sigma ' \in \triangle (n-1)} \cos (\angle (\pi (k_\sigma), \pi (k_{\sigma '}))) \leq \gamma,$$
by the condition $(\mathcal{B}_{\delta,r'})$ for suitable values of $\delta$ and $r'$:

\begin{theorem}
\label{vanishing of cohomology by conditions on the links and consistency assumption}
Let $\Sigma$ be a pure $n$-dimensional infinite simplicial complex and let $G < Aut (\Sigma)$ be a closed subgroup. Assume that $(\Sigma, G)$ satisfy conditions $(\mathcal{B} 1)-(\mathcal{B} 4)$ and that there is a $l \geq 1$ such that all the $l$-dimensional links of $\Sigma$ are compact. 

Let $r > r' \geq 2, C_1 \geq 1, 1 \geq \theta_2 > 0, C_{3} \geq 1$ be constants. Then there are $s_0 = s_0 (n)>0$ and  
$$\delta = \delta (n,r,r',C_{1},\theta_2, C_{3})  >0$$ 
such that if $(\Sigma, G)$ fulfil condition $(\mathcal{B}_{\delta,r'})$ and if the projections $\pi (k_\sigma)$ with $\sigma \in \triangle (n-1)$ are consistent, then 
$$H^* (G, \pi) = \bigoplus_{\eta \subseteq \triangle} \widetilde{H}^{*-1} (D_\eta; X^{\eta}),$$
and
$$H^i (G,\pi) = 0 \text{ for } i=1,...,l,$$
for every $\pi \in \mathcal{F} (\overline{\mathcal{E}_3 (\mathcal{E}_2 (\mathcal{E}_1 (r, C_1),\theta_2),C_3)}, G, s_0)$.
\end{theorem}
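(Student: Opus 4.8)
The plan is to assemble the statement from Theorem~\ref{vanishing of cohomology by conditions on the representations} and Lemma~\ref{small angle by B-delta,r condition}, using the definition of the class $\mathcal{F}(\overline{\mathcal{E}_3(\mathcal{E}_2(\mathcal{E}_1(r,C_1),\theta_2),C_3)},G,s_0)$ to supply the bound on the norms of the projections $\pi(k_\sigma)$.

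First I would invoke Theorem~\ref{vanishing of cohomology by conditions on the representations} to fix the dimension-dependent constants $\gamma=\gamma(n)>0$ and $\beta=\beta(n)>1$ appearing there; these are precisely the thresholds on $\sup_{\sigma\in\triangle(n-1)}\Vert\pi(k_\sigma)\Vert$ and on $\sup_{\sigma,\sigma'\in\triangle(n-1)}\cos(\angle(\pi(k_\sigma),\pi(k_{\sigma'})))$ that, together with consistency of the projections $\pi(k_\sigma)$, already force $H^*(G,\pi)=\bigoplus_{\eta\subseteq\triangle}\widetilde{H}^{*-1}(D_\eta;X^{\eta})$ and $H^i(G,\pi)=0$ for $i=1,\dots,l$. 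Condition $(\mathcal{B}_{\delta,r'})$ forces every $1$-dimensional link of $\Sigma$ to be finite (and $l\geq 1$ likewise gives compact $1$-dimensional links, being sublinks of compact $l$-dimensional links), so we are genuinely in the hypotheses of that theorem. It therefore remains to choose $s_0$ and $\delta$ so that every $\pi\in\mathcal{F}(\overline{\mathcal{E}_3(\mathcal{E}_2(\mathcal{E}_1(r,C_1),\theta_2),C_3)},G,s_0)$ meets these two numerical conditions.

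Next I would treat the norm bound. Every $\sigma\in\triangle(n-1)\cup\triangle(n)$ contains some $\tau\in\triangle(n-2)$, so $G_\sigma\leq G_\tau$; hence $G_\sigma$ is compact and $G_\sigma\subseteq\bigcup_{\tau\in\triangle(n-2)}G_\tau$, which makes $k_\sigma$ and $\pi(k_\sigma)$ well defined with $\pi(k_\sigma)$ a projection onto $X^{\pi(G_\sigma)}$. From $\pi(k_\sigma)=\mu(G_\sigma)^{-1}\int_{G_\sigma}\pi(g)\,d\mu(g)$ and the defining inequality of the class $\mathcal{F}(\cdots,G,s_0)$ one gets $\Vert\pi(k_\sigma)\Vert\leq\sup_{g\in G_\sigma}\Vert\pi(g)\Vert\leq e^{s_0}$. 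Consequently, taking $s_0=s_0(n):=\ln\beta(n)>0$ guarantees $\sup_{\sigma\in\triangle(n-1)}\Vert\pi(k_\sigma)\Vert\leq\beta(n)$ for every $\pi$ in the class.

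Finally, with $\gamma=\gamma(n)$ and $s_0=s_0(n)$ now fixed and $r>r'\geq 2$, $C_1$, $\theta_2$, $C_3$ the given constants (so $2\leq r'<r$ as required there), Lemma~\ref{small angle by B-delta,r condition} furnishes a $\delta=\delta(n,r,r',C_1,\theta_2,C_3)>0$ such that $(\mathcal{B}_{\delta,r'})$ implies $\sup_{\sigma,\sigma'\in\triangle(n-1)}\cos(\angle(\pi(k_\sigma),\pi(k_{\sigma'})))\leq\gamma$ for every $\pi\in\mathcal{F}(\overline{\mathcal{E}_3(\mathcal{E}_2(\mathcal{E}_1(r,C_1),\theta_2),C_3)},G,s_0)$. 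Feeding the norm bound, this angle bound, and the standing consistency hypothesis into Theorem~\ref{vanishing of cohomology by conditions on the representations} yields the claimed decomposition of $H^*(G,\pi)$ and the vanishing $H^i(G,\pi)=0$ for $i=1,\dots,l$. I do not anticipate a genuine obstacle: the theorem is a bookkeeping synthesis of the two preceding results, and the only mildly delicate point is verifying that the $\mathcal{F}$-bound imposed on $\bigcup_{\tau\in\triangle(n-2)}G_\tau$ propagates to all the $\pi(k_\sigma)$ with $\sigma$ of codimension $0$ or $1$, which is immediate from $\tau\subseteq\sigma\Rightarrow G_\sigma\leq G_\tau$.
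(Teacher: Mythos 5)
Your proposal is correct and follows essentially the same route as the paper: fix $\beta(n)$ and $\gamma(n)$ from Theorem \ref{vanishing of cohomology by conditions on the representations}, set $s_0=\ln\beta(n)$ so that the defining bound of $\mathcal{F}(\cdot,G,s_0)$ on $\bigcup_{\tau\in\triangle(n-2)}G_\tau$ yields $\Vert\pi(k_\sigma)\Vert\leq\beta$ for all $\sigma\in\triangle(n-1)$, and then choose $\delta$ via Lemma \ref{small angle by B-delta,r condition} to force the angle bound under $(\mathcal{B}_{\delta,r'})$. The only difference is that you spell out a couple of sanity checks (compactness of the relevant stabilizers, finiteness of the $1$-dimensional links) that the paper leaves implicit.
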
 

\begin{proof}
Let $\beta >1$, $\gamma >0$ be the constants given by Theorem \ref{vanishing of cohomology by conditions on the representations}.

Choose $s_0 = \ln (\beta)$, by this choice the inequality
$$\max_{\sigma \in \triangle (n-1)} \Vert \pi (k_\sigma) \Vert \leq \sup_{g \in \bigcup_{\sigma \in \triangle (n-1)} G_\sigma} \Vert \pi (g) \Vert  \leq  \sup_{g \in \bigcup_{\tau \in \triangle (n-2)} G_\tau} \Vert \pi (g) \Vert \leq e^{s_0} = \beta$$
is satisfied for each $\pi \in \mathcal{F} (\overline{\mathcal{E}_3 (\mathcal{E}_2 (\mathcal{E}_1 (r, C_1),\theta_2),C_3)}, G, s_0)$. 

By Lemma \ref{small angle by B-delta,r condition}, we can choose $\delta>0$ small enough such that the condition $(\mathcal{B}_{\delta,r'})$ we imply that 
$$\sup_{\sigma, \sigma ' \in \triangle (n-1)} \cos (\angle (\pi (k_\sigma), \pi (k_{\sigma '}))) \leq \gamma,$$
for every $\pi \in \mathcal{F} (\overline{\mathcal{E}_3 (\mathcal{E}_2 (\mathcal{E}_1 (r, C_1),\theta_2),C_3)}, G, s_0)$. 

Therefore for this choice of $s_0>0$ and $\delta >0$, the conditions of Theorem \ref{vanishing of cohomology by conditions on the representations} are fulfilled and the conclusion follows.
\end{proof}

The unsatisfactory part of the above theorem is the assumption of consistency of the projections $\pi (k_\sigma)$. We will show that when passing to the class $\mathcal{F}_0 (\overline{\mathcal{E}_3 (\mathcal{E}_2 (\mathcal{E}_1 (r, C_1),\theta_2),C_3)}, G, s_0)$ (in which the dual representations are continuous) this always assumption holds.  

\begin{lemma}
\label{continuous dual implies consistency}
Let $\Sigma$ be a pure $n$-dimensional infinite simplicial complex and let $G < Aut (\Sigma)$ be a closed subgroup. Assume that $(\Sigma, G)$ satisfy conditions $(\mathcal{B} 1)-(\mathcal{B} 4)$.

Let $\pi$ be a continuous representation on a Banach space $X$ such that 
$$\sup_{\sigma \in \triangle (n-1)} \Vert \pi (k_\sigma) \Vert \leq \beta_0 , \sup_{\sigma, \sigma ' \in \triangle (n-1)} \cos (\angle (\pi (k_\sigma), \pi (k_{\sigma '}))) \leq \gamma_0,$$
where $\beta_0 >1, \gamma_0 >0$ are the constants given by Corollary \ref{Quick uniform convergence corollary}.

If the dual representation $\pi^*$ is continuous, then the projections $\pi (k_\sigma)$ for $\sigma \in \triangle (n-1)$ are consistent. 
\end{lemma}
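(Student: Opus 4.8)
We need to show consistency of the projections $P_\sigma = \pi(k_\sigma)$, $\sigma \in \triangle(n-1)$. By Proposition \ref{consistency is checked on P_i's proposition}, since the norm and angle hypotheses together with Corollary \ref{Quick uniform convergence corollary} and Theorem \ref{small angle theorem} guarantee that $P_\tau$ exists for every $\tau \subseteq \triangle$, it suffices to prove that for every $\tau \subsetneqq \triangle$ and every $\sigma \in \triangle(n-1)$ with $\tau \subseteq \sigma$ we have $P_\tau P_\sigma = P_\tau$. The plan is to pass to the dual representation. Write $P_\sigma^* = \pi^*(k_\sigma) = \pi(k_\sigma)^*$ — this is the key point where continuity of $\pi^*$ enters, as it lets us identify the adjoint of $\pi(k_\sigma)$ with the operator $\pi^*(k_\sigma)$ built from the same averaging function, and it is again a projection on $(X^*)^{\pi^*(G_\sigma)}$. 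Because $G_\sigma$ is compact, $k_\sigma$ is a real, symmetric ($k_\sigma(g) = k_\sigma(g^{-1})$) idempotent in $C_c(G)$, and for $\tau \subseteq \sigma$ we have $k_\tau k_\sigma = k_\tau = k_\sigma k_\tau$; dualizing, $P_\tau^* P_\sigma^* = P_\sigma^* P_\tau^* = P_\tau^*$ in the appropriate order.

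Next I would run the whole machinery of Section \ref{Angle between more than 2 projections and space decomposition} simultaneously for $\pi$ and for $\pi^*$. The dual operators $P_\sigma^*$ satisfy the same norm bound $\|P_\sigma^*\| = \|P_\sigma\| \le \beta_0$ and, since $\cos(\angle(P_\sigma,P_{\sigma'})) = \|\pi(k_\sigma k_{\sigma'} - k_{\sigma\cap\sigma'})\|$ and taking adjoints, $\cos(\angle(P_\sigma^*, P_{\sigma'}^*)) \le \gamma_0$ as well (one should check the max over the two orderings is preserved under transposition, which it is). Hence by Corollary \ref{Quick uniform convergence corollary} the averaged operators $T_\tau^* = \frac{1}{|\triangle\setminus\tau|}\sum_{\sigma \supseteq \tau} P_\sigma^*$ converge to a projection which, by Proposition \ref{canonical proposition} applied with the candidate $(P_\tau)^*$ (which by the previous paragraph satisfies $(P_\tau)^* P_\sigma^* = (P_\tau)^*$ for all $\sigma \supseteq \tau$ with $\sigma \in \triangle(n-1)$, because $P_\tau = \lim T_\tau^i$ and $P_\tau P_\sigma = T_\tau^i P_\sigma = \ldots$ — wait, that is exactly what we are trying to prove). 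So instead: the dual operators $T_\tau^*$ converge to $(P_\tau)^*$ directly, since $(T_\tau^i)^* = (T_\tau^*)^i$ and adjoints are norm-continuous. Thus $(P_\tau)^* = \lim (T_\tau^*)^i$ is a projection, and it has range $\bigcap_{\sigma \supseteq \tau} \im(P_\sigma^*)$.

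Now I would exploit that $k_\tau$ itself is an idempotent in $C_c(G)$ projecting onto something smaller, with $k_\tau k_\sigma = k_\tau$ for each $\sigma \in \triangle(n-1)$ containing $\tau$. Then $\pi^*(k_\tau)$ is a projection satisfying $\pi^*(k_\tau)\, \pi^*(k_\sigma) = \pi^*(k_\sigma k_\tau) = \pi^*(k_\tau)$ for all such $\sigma$ — wait, convolution order: $\pi^*(f)\pi^*(h) = \pi^*(h * f)$ or $\pi^*(f*h)$ depending on conventions; I would fix the convention so that $\pi^*(k_\tau)$ plays the role of the ``$P'_\tau$'' in the Proposition before Lemma \ref{In X^eta lemma}. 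Applying Proposition \ref{canonical proposition} with $P'_\tau = \pi^*(k_\tau)$ then forces $(P_\tau)^* = \pi^*(k_\tau) = \pi(k_\tau)^*$, hence $P_\tau = \pi(k_\tau)$. Once we know $P_\tau = \pi(k_\tau)$ for every $\tau \subsetneqq \triangle$, consistency is immediate from the algebra of the $k$'s: for $\tau \subseteq \eta \subsetneqq \triangle$, $k_\tau k_\eta = k_\tau$, so $P_\tau P_\eta = \pi(k_\tau k_\eta) = \pi(k_\tau) = P_\tau$, which is exactly Definition \ref{consistency definition}.

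\textbf{Main obstacle.} The delicate point is not the convolution-algebra bookkeeping but justifying the identification $\pi(k_\sigma)^* = \pi^*(k_\sigma)$ and, more subtly, that $(P_\tau)^* = \lim (T_\tau^*)^i$ really equals the operator $\pi^*(k_\tau)$ via Proposition \ref{canonical proposition}. The hypothesis ``$\pi^*$ continuous'' is what makes $\pi^*(k_\tau)$ a bona fide bounded operator defined by a Bochner integral on $X^*$ (rather than merely a weak-$*$ object), so that the convergence statements of Corollary \ref{Quick uniform convergence corollary} apply on $X^*$ literally. I expect the bulk of the write-up to consist of: (i) verifying $\pi^*(f)\pi^*(h) = \pi(h*f)^* $-type identities and that $k_\sigma, k_\tau$ are self-adjoint idempotents with $k_\tau k_\sigma = k_\tau$; (ii) transferring the norm and angle bounds to the duals; (iii) invoking Proposition \ref{canonical proposition} twice — once to get $(P_\tau)^* = \pi^*(k_\tau)$ and thereby $P_\tau = \pi(k_\tau)$, and then reading off consistency. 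No genuinely new estimate is needed beyond what is already in the excerpt.
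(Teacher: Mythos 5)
Your overall strategy -- dualize, use the continuity of $\pi^*$ to run the averaged--projection machinery of Corollary \ref{Quick uniform convergence corollary} on $X^*$, identify $(P_\tau)^* = \lim (T_\tau^*)^i$, and transfer information back to $X$ -- is exactly the paper's strategy, and the first two thirds of your write-up (transfer of the norm and angle bounds to the adjoints, $(T_\tau^i)^* = (T_\tau^*)^i$, norm-continuity of the adjoint map) is sound. The problem is the final assembly. You propose to apply Proposition \ref{canonical proposition} with the candidate $P'_\tau = \pi^*(k_\tau)$ where $k_\tau = \chi_{G_\tau}/\mu(G_\tau) \in C_c(G)$, conclude $P_\tau = \pi(k_\tau)$, and then read off consistency from $k_\tau k_\eta = k_\tau$. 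But Proposition \ref{consistency is checked on P_i's proposition} requires $P_\tau P_\sigma = P_\tau$ for \emph{every} $\tau \subsetneqq \triangle$, and for $\tau$ of dimension $\leq n-3$ -- in particular for $\tau = \emptyset$, where $G_\emptyset = G$ is non-compact because $\Sigma$ is infinite -- the stabilizer $G_\tau$ is not compact, so $\chi_{G_\tau}/\mu(G_\tau)$ is not a compactly supported function, $\pi^*(k_\tau)$ is not defined by a Bochner integral, and your candidate projection does not exist. (The standing assumption of Section 4 only gives compactness of $G_\sigma$ for $\sigma \in \triangle(n) \cup \triangle(n-1) \cup \triangle(n-2)$.) The $k_\tau$ appearing in the paper's proof for general $\tau$ is an abstract limit in the completed convolution algebra, not a function, so the identity $k_\tau k_\sigma = k_\tau$ is precisely the statement to be proved, not an input -- this is the same circularity you flagged mid-proof and patched locally, resurfacing at the end.

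The gap is easily repaired with material you already have on the page: once you know $(P_\tau)^* = \lim (T_\tau^*)^i$ is (by Corollary \ref{Quick uniform convergence corollary} applied to the duals) a projection with range $\bigcap_{\sigma \supseteq \tau} \im(\pi^*(k_\sigma))$, you get immediately that $\pi^*(k_\sigma)(P_\tau)^* = (P_\tau)^*$ for each $\sigma \in \triangle(n-1)$ containing $\tau$; taking adjoints (and restricting from $X^{**}$ to $X$) gives $P_\tau\, \pi(k_\sigma) = P_\tau$, which is all that Proposition \ref{consistency is checked on P_i's proposition} asks for. The paper's own proof is a variant of this: it shows the stronger relation $\pi(k_\tau)\pi(g) = \pi(k_\tau)$ for every $g \in G_\tau$ by pairing against $w \in X^*$ and using that $\pi^*(k_\tau)w$ lies in the $\pi^*(G_\tau)$-fixed vectors; at no point does it need $k_\tau$ to be an honest element of $C_c(G)$ for low-dimensional $\tau$. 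So: right idea, but route the endgame through the image of the dual limit projection rather than through a convolution identity for functions that need not exist.
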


\begin{proof}
Let $\mathcal{F} = \lbrace \pi, \pi^* \rbrace$. Note that for $\pi^*$ the following holds:
$$\sup_{\sigma \in \triangle (n-1)} \Vert \pi^* (k_\sigma) \Vert \leq \beta_0 , \sup_{\sigma, \sigma ' \in \triangle (n-1)} \cos (\angle (\pi^* (k_\sigma), \pi^* (k_{\sigma '}))) \leq \gamma_0.$$ 
Therefore by Corollary \ref{Quick uniform convergence corollary}, for every $\tau \subsetneqq \triangle$,
$$\left(\frac{1}{n+1-\vert \tau \vert} \sum_{\sigma \in \triangle (n-1), \tau \subseteq \sigma} k_\sigma \right)^i \xrightarrow{i \rightarrow \infty} k_\tau ,$$
where the convergence is in $C_{\mathcal{F}}$ and $\pi (k_\tau)$ and $\pi^* (k_\tau)$ are projections on $X^{\pi(G_\tau)}$ and $(X^*)^{\pi^* (G_\tau)}$ respectively. 

By Proposition \ref{consistency is checked on P_i's proposition}, in order to prove consistency, it is enough to show that 
$$\forall \tau \subsetneqq \triangle, \forall \sigma \in \triangle (n-1), \tau \subseteq \sigma \Rightarrow \pi (k_\tau) \pi (k_\sigma) = \pi (k_\tau).$$ 

We will prove the following condition which is actually stronger: 
$$\forall \tau \subsetneqq \triangle, \forall g \in G_\tau, \pi (k_\tau) \pi (g) = \pi (k_{\tau}).$$
Fix some $\tau \subsetneqq \triangle$ and $g \in G_\tau$. For every $v \in X, w \in X^*$ we have that 
\begin{dmath*}
\langle \pi (k_\tau) \pi (g).v, w \rangle = \langle v,  \pi^* (g) \pi^* (k_\tau).w \rangle = \langle v, \pi^* (k_\tau).w \rangle = \langle \pi (k_\tau).v, w \rangle.
\end{dmath*}
Therefore, $\pi (k_\tau) \pi (g) = \pi (k_{\tau})$ as needed.
\end{proof}

\begin{remark}
We note that if $G$ is a discrete group, then the condition of $\pi^*$ being continuous always holds (since it is vacuous). 
\end{remark}

As a corollary of the Lemma \ref{continuous dual implies consistency} we deduce the following theorem:

\begin{theorem}
\label{vanishing of cohomology by conditions on the links and cont dual assumption}
Let $\Sigma$ be a pure $n$-dimensional infinite simplicial complex and let $G < Aut (\Sigma)$ be a closed subgroup. Assume that $(\Sigma, G)$ satisfy conditions $(\mathcal{B} 1)-(\mathcal{B} 4)$ and that there is $l \in \mathbb{N}$ such that all the $l$-dimensional links of $\Sigma$ are compact. 

Let $r > r' \geq 2, C_1 \geq 1, 1 \geq \theta_2 > 0, C_{3} \geq 1$ be constants. Then there are $s_0 = s_0 (n)>0$ and  
$$\delta = \delta (n,r,r',C_{1},\theta_2, C_{3})  >0$$ 
such that if $(\Sigma, G)$ fulfil condition $(\mathcal{B}_{\delta,r'})$, then 
$$H^* (G, \pi) = \bigoplus_{\eta \subseteq \triangle} \widetilde{H}^{*-1} (D_\eta; X^{\eta}),$$
and
$$H^i (G,\pi) = 0 \text{ for } i=1,...,l,$$
for every $\pi \in \mathcal{F}_0 (\overline{\mathcal{E}_3 (\mathcal{E}_2 (\mathcal{E}_1 (r, C_1),\theta_2),C_3)}, G, s_0)$.
\end{theorem}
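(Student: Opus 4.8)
The plan is to obtain this statement as a corollary of Theorem~\ref{vanishing of cohomology by conditions on the links and consistency assumption}: the only hypothesis of that theorem absent here is the consistency of the projections $\pi(k_\sigma)$, $\sigma \in \triangle(n-1)$, and this consistency will be supplied automatically by Lemma~\ref{continuous dual implies consistency} once we know that $\pi^*$ is continuous, which is exactly the defining property of the class $\mathcal{F}_0$.

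In more detail, I would first invoke Theorem~\ref{vanishing of cohomology by conditions on the links and consistency assumption} to produce candidate constants $s_0 = s_0(n) > 0$ and $\delta_1 = \delta_1(n,r,r',C_1,\theta_2,C_3) > 0$. Recall from the proof of that theorem that $s_0 = \ln(\beta_0)$, where $\beta_0 > 1$ is the constant of Corollary~\ref{Quick uniform convergence corollary}; hence, exactly as in that proof, every $\pi \in \mathcal{F}(\overline{\mathcal{E}_3(\mathcal{E}_2(\mathcal{E}_1(r,C_1),\theta_2),C_3)}, G, s_0)$ satisfies $\sup_{\sigma \in \triangle(n-1)} \Vert\pi(k_\sigma)\Vert \le e^{s_0} = \beta_0$. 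Let $\gamma_0 > 0$ be the other constant of Corollary~\ref{Quick uniform convergence corollary}. Applying Lemma~\ref{small angle by B-delta,r condition} with target angle $\gamma_0$ (and the same $s_0$ and $r'$) yields a second threshold $\delta_2 > 0$ such that condition $(\mathcal{B}_{\delta_2,r'})$ forces $\sup_{\sigma,\sigma' \in \triangle(n-1)} \cos(\angle(\pi(k_\sigma),\pi(k_{\sigma'}))) \le \gamma_0$ for all such $\pi$. Set $\delta = \min\{\delta_1,\delta_2\}$, which still depends only on $n,r,r',C_1,\theta_2,C_3$.

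Now suppose $(\Sigma, G)$ satisfies $(\mathcal{B}_{\delta,r'})$ and fix $\pi \in \mathcal{F}_0(\overline{\mathcal{E}_3(\mathcal{E}_2(\mathcal{E}_1(r,C_1),\theta_2),C_3)}, G, s_0)$. Since $\mathcal{F}_0 \subseteq \mathcal{F}$ and $\delta \le \delta_2$, the previous paragraph gives the norm bound $\Vert\pi(k_\sigma)\Vert \le \beta_0$ and the angle bound $\cos(\angle(\pi(k_\sigma),\pi(k_{\sigma'}))) \le \gamma_0$; and by the definition of $\mathcal{F}_0$ the dual representation $\pi^*$ is continuous. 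Lemma~\ref{continuous dual implies consistency} therefore shows that the projections $\pi(k_\sigma)$, $\sigma \in \triangle(n-1)$, are consistent. Since also $\delta \le \delta_1$, condition $(\mathcal{B}_{\delta,r'})$ implies $(\mathcal{B}_{\delta_1,r'})$, so all hypotheses of Theorem~\ref{vanishing of cohomology by conditions on the links and consistency assumption} hold for this $\pi$, and its conclusion yields both the decomposition $H^*(G,\pi) = \bigoplus_{\eta \subseteq \triangle} \widetilde{H}^{*-1}(D_\eta; X^\eta)$ and the vanishing $H^i(G,\pi) = 0$ for $i = 1, \dots, l$.

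There is essentially no analytic obstacle here; the content of the theorem lies entirely in the results already established, and the only care needed is the bookkeeping of the two thresholds $\delta_1,\delta_2$ — in particular the verification that the angle bound required by Lemma~\ref{continuous dual implies consistency}, namely $\gamma_0$ rather than the possibly larger $\gamma$ used inside Theorem~\ref{vanishing of cohomology by conditions on the representations}, can indeed be achieved by shrinking $\delta$. Taking $\delta = \min\{\delta_1,\delta_2\}$ handles this cleanly, and the dependence of all constants on only $n,r,r',C_1,\theta_2,C_3$ is preserved throughout.
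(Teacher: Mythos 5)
Your proposal is correct and follows essentially the same route as the paper: reduce to Theorem~\ref{vanishing of cohomology by conditions on the links and consistency assumption} by using the continuity of $\pi^*$ (the defining property of $\mathcal{F}_0$) together with Lemma~\ref{continuous dual implies consistency} to supply the missing consistency hypothesis. Your extra bookkeeping with $\delta = \min\{\delta_1,\delta_2\}$ to guarantee the angle bound $\gamma_0$ required by Lemma~\ref{continuous dual implies consistency} is a slightly more careful version of what the paper leaves implicit (its $\gamma$ is already $\le \gamma_0$ by construction), and is harmless.
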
 

\begin{proof}
By Lemma \ref{continuous dual implies consistency}, the projections $\pi (k_\sigma)$ with $\sigma \in \triangle (n-1)$ are consistent for every $\pi \in \mathcal{F}_0 (\overline{\mathcal{E}_3 (\mathcal{E}_2 (\mathcal{E}_1 (r, C_1),\theta_2),C_3)}, G, s_0)$ and therefore we can apply Theorem \ref{vanishing of cohomology by conditions on the links and consistency assumption}.
\end{proof}

We recall that by Theorem \ref{Asplund implies continuous dual rep} stated above, for a continuous representation $\pi$ on a Banach space $X$, if $X$ is an Asplund space then $\pi^*$ is continuous. We also recall that all reflexive Banach spaces are Asplund spaces and it was shown in the introduction that the subclass of reflexive Banach spaces of $\overline{\mathcal{E}_3 (\mathcal{E}_2 (\mathcal{E}_1 (r, C_1),\theta_2),C_3)}$ contains several interesting families of Banach spaces.

As stated in subsection \ref{Groups acting on simplicial complexes subscetion} above, the main example of couples $(\Sigma, G)$ satisfying the conditions  $(\mathcal{B} 1)-(\mathcal{B} 4)$ are groups $G$ with a BN-pair acting on a building $\Sigma$. In Proposition \ref{condition B-delta,r for buildings}, we showed that the condition $(\mathcal{B}_{\delta,r})$ can also be deduced for these examples for suitable values of $r$. Therefore we can deduce the following corollary:

\begin{corollary}
\label{vanishing of cohomology for BN pair groups}
Let $G$ be a group coming from a BN-pair and let $\Sigma$ be the $n$-dimensional building on which it acts. Assume that $n >1$ and there is some $l \geq 1$ such that all the $l$-dimensional links of $\Sigma$ are compact. Denote by $q$ the thickness of the building $\Sigma$ and let $m'$ be the smallest integer such that all the links of $1$-dimensional links of $\Sigma$ are generalized $m$-gons with $m \leq m'$. 

Let 
$$r > \begin{cases}
4 & m' =3 \\
8 & m' =4 \\
18 & m' = 6 \\
20 & m' =8
\end{cases}$$ 
and $C_{1} \geq 1, 1 \geq \theta_2 > 0, C_{3} \geq 1$ be constants, then there are $s_0 = s_0 (n)>0$ and  
$$Q = Q (n,r,m',C_{1},\theta_2, C_{3},s_0)  \in \mathbb{N}$$
such that if $q \geq Q$, then 
$$H^* (G, \pi) = \bigoplus_{\eta \subseteq \triangle} \widetilde{H}^{*-1} (D_\eta; X^{\eta}),$$
and
$$H^i (G,\pi) = 0 \text{ for } i=1,...,l,$$
for every $\pi \in \mathcal{F}_0 (\overline{\mathcal{E}_3 (\mathcal{E}_2 (\mathcal{E}_1 (r, C_1),\theta_2),C_3)}, G, s_0)$.
\end{corollary}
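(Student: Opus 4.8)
The plan is to combine Theorem \ref{vanishing of cohomology by conditions on the links and cont dual assumption} with Proposition \ref{condition B-delta,r for buildings}, so that the only work is to check that the hypotheses of the former can be arranged by taking the thickness $q$ large. First I would invoke the Proposition from \cite{DymaraJ} (Propositions 1.6, 1.7, quoted in Subsection \ref{Groups acting on simplicial complexes subscetion}) to record that a group $G$ with a BN-pair acting on its building $\Sigma$ satisfies conditions $(\mathcal{B}1)$--$(\mathcal{B}4)$, provided $\Sigma$ is non-compact and of finite thickness; since we assume $n>1$, $\Sigma$ is infinite, and the hypothesis that all $l$-dimensional links are compact gives finite thickness, this applies. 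This gives us the structural hypotheses of Theorem \ref{vanishing of cohomology by conditions on the links and cont dual assumption}, together with the assumption that some $l$-dimensional links of $\Sigma$ are compact, which is part of our hypothesis.

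Next I would fix $r$ in the stated range (strictly larger than the $m'$-dependent threshold $4,8,18,20$) and choose an auxiliary exponent $r'$ with $2\le r'<r$, still above the relevant threshold from Proposition \ref{condition B-delta,r for buildings}; this is possible precisely because our lower bound on $r$ is the \emph{strict} threshold, leaving room for such an $r'$. Now Theorem \ref{vanishing of cohomology by conditions on the links and cont dual assumption}, applied with the constants $r>r'\ge 2$, $C_1$, $\theta_2$, $C_3$, furnishes $s_0=s_0(n)>0$ and $\delta=\delta(n,r,r',C_1,\theta_2,C_3)>0$ such that condition $(\mathcal{B}_{\delta,r'})$ for $(\Sigma,G)$ already yields the desired cohomology computation and the vanishing $H^i(G,\pi)=0$ for $i=1,\dots,l$, for every $\pi\in\mathcal{F}_0(\overline{\mathcal{E}_3(\mathcal{E}_2(\mathcal{E}_1(r,C_1),\theta_2),C_3)},G,s_0)$. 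So it remains only to produce $(\mathcal{B}_{\delta,r'})$ from a thickness bound.

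For that last step I would invoke Proposition \ref{condition B-delta,r for buildings}: since the $1$-dimensional links of $\Sigma$ are compact and $r'$ is strictly above the threshold corresponding to $m'$, there is a $Q=Q(n,r,m',C_1,\theta_2,C_3,s_0)\in\mathbb{N}$ — namely, the thickness beyond which the Feit--Higman estimate forces $(1-\kappa(\eta))(V_{\min}(\eta))^{1/r'}\le\delta$ for the specific $\delta$ produced above — such that $q\ge Q$ implies $(\mathcal{B}_{\delta,r'})$ holds for $\Sigma$. (Strictly speaking, the quantity $\delta$ is absorbed into the dependence, so $Q$ depends on the same data as $\delta$, i.e.\ $n,r,r',m',C_1,\theta_2,C_3$; the choice of $r'$ is itself a function of $r$ and $m'$, so listing $Q=Q(n,r,m',C_1,\theta_2,C_3,s_0)$ is consistent.) Assembling these pieces: $q\ge Q$ gives $(\mathcal{B}_{\delta,r'})$, which by Theorem \ref{vanishing of cohomology by conditions on the links and cont dual assumption} gives the stated conclusion.

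I do not expect a genuine obstacle here — the Corollary is a packaging of two earlier results. The one point requiring a little care is the bookkeeping of which threshold $r'$ must clear: it must simultaneously be below $r$ and above the Proposition \ref{condition B-delta,r for buildings} threshold for the given $m'$, and the strictness of the hypothesis on $r$ is exactly what makes this non-empty. A secondary point is to note that the building in question is indeed infinite (so that the "infinite simplicial complex" hypotheses of the cited theorems apply), which follows from $n>1$ together with the compactness of the $l$-dimensional links forcing finite thickness, hence a non-compact affine or otherwise infinite building — or more simply from the standing assumption in the statement that such an $l$ exists, since a building all of whose links are compact and which is itself compact would be a spherical building and would have no non-trivial higher cohomology question to speak of; in any case the cited Dymara--Januszkiewicz propositions already presuppose $\Sigma$ non-compact, which we may and do assume.
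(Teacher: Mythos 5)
Your proposal is correct and follows essentially the same route as the paper: apply Theorem \ref{vanishing of cohomology by conditions on the links and cont dual assumption} with an auxiliary exponent $r'$ chosen strictly between the $m'$-dependent threshold and $r$ (the paper takes the midpoint, e.g.\ $r'=\frac{4+r}{2}$ for $m'=3$) to obtain $s_0$ and $\delta$, and then use Proposition \ref{condition B-delta,r for buildings} to find $Q$ so that $q\geq Q$ forces $(\mathcal{B}_{\delta,r'})$. The bookkeeping points you flag (strictness of the bound on $r$ leaving room for $r'$, and the applicability of the Dymara--Januszkiewicz propositions) are exactly the right ones.
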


\begin{proof}
Fix 
$$r' = \begin{cases}
\frac{4+r}{2} & m' =3 \\
\frac{8+r}{2}  & m' =4 \\
\frac{18+r}{2}  & m' = 6 \\
\frac{20+r}{2}  & m' =8
\end{cases}$$ 
and let $s_0 (n) >0$, $\delta = \delta (n,r,r',C_{1},\theta_2, C_{3},s_0) >0$ be as in Theorem \ref{vanishing of cohomology by conditions on the links and cont dual assumption}. By Proposition \ref{condition B-delta,r for buildings}, there is a large enough $Q$ such that for every $q \geq Q$, $\Sigma$ fulfils the condition $(\mathcal{B}_{\delta,r'})$ and we are done by Theorem \ref{vanishing of cohomology by conditions on the links and cont dual assumption}.
\end{proof}

\appendix
\section{Angle between projections without the consistency assumption}
Under the notations of definition \ref{angle between projections definition}, given $\tau, \tau ' \subseteq \triangle$ such that $P_\tau, P_{\tau'}, P_{\tau \cap \tau'}$ exist, we can define $\cos (\angle (P_\tau, P_{\tau '}))$ as 
$$\cos (\angle (P_\tau, P_{\tau '})) = \max \lbrace \Vert P_\tau P_{\tau'} (I-P_{\tau \cap \tau'}) \Vert, \Vert P_{\tau'} P_{\tau} (I-P_{\tau \cap \tau'}) \Vert \rbrace.$$
We note that in this definition, we do not assume that $P_{\tau \cap \tau'} P_\tau = P_{\tau \cap \tau'}$ or that $P_{\tau \cap \tau'} P_{\tau '} = P_{\tau \cap \tau'}$. However, even without this assumption of consistency, we can derive a theorem similar to Theorem \ref{small angle theorem}:

\begin{theorem}
\label{small angle theorem without consistency assumption theorem}
Let $X$, $\triangle$ and $P_\sigma \in \triangle (n-1) \cup \triangle (n)$ be as in definition \ref{simplex projections definition} above. 
Assume that for every $\eta \in \triangle (n-2)$, the projection $P_\eta$ exists and for $\sigma \in \triangle (n-1)$, if $\eta \subset \sigma$ then $P_\eta P_\sigma = P_\eta$.

Then there is $\beta >1$ such that for every $\varepsilon >0$ there is $\gamma>0$ such that if 
$$ \max_{\sigma \in \triangle (n-1)} \lbrace \Vert P_\sigma \Vert \leq \beta \text{ and } \max \lbrace  \cos(\angle (P_\sigma,P_{\sigma ' })) : \sigma, \sigma ' \in \triangle (n-1) \rbrace \leq \gamma,$$
then $P_\tau$ exist for any $\tau \subseteq \triangle$ and for every $\tau, \tau' \subseteq \triangle$,
$$\cos (\angle (P_\tau,P_{\tau '})) \leq \varepsilon.$$

\end{theorem}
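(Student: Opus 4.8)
The plan is to bypass the consistency hypothesis by reducing every angle $\cos(\angle(P_\tau,P_{\tau'}))$ to a controlled manipulation of finite products of the elementary projections $P_\sigma$, $\sigma\in\triangle(n-1)$, using two structural facts that hold automatically. First, although full consistency may fail, its ``left half'' is free: whenever $\tau\subseteq\eta$ are faces of $\triangle$ for which $P_\tau$ and $P_\eta$ exist, the description of $P_\tau$ as the projection onto $\bigcap_{\sigma\in\triangle(n-1),\,\tau\subseteq\sigma}\im(P_\sigma)$ forces $\im(P_\tau)\subseteq\im(P_\eta)$, hence $P_\eta P_\tau=P_\tau$; in particular $P_\sigma P_\tau=P_\tau$ for every $\sigma\in\triangle(n-1)$ with $\tau\subseteq\sigma$, and $P_\tau P_\rho=P_\rho$ for every face $\rho\subseteq\tau$. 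Second, once $\gamma$ is small, Theorem~\ref{angle between several projections theorem} together with Corollary~\ref{Quick uniform convergence corollary} supplies: every $P_\tau$, $\tau\subseteq\triangle$, exists, it satisfies a uniform bound $\|P_\tau\|\le B(n)$, and for any pairwise distinct $\sigma_0,\dots,\sigma_k\in\triangle(n-1)$ the several-projection angle $\cos(\angle(P_{\sigma_0},\dots,P_{\sigma_k}))$ lies below any prescribed $\varepsilon'>0$.

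The first thing I would record is that each $P_\tau$ is, up to $\varepsilon'$ in norm, a fixed finite product of elementary projections. For $\tau\subsetneq\triangle$ let $\Pi_\tau$ denote the product of the (at most $n+1$) faces $\sigma\in\triangle(n-1)$ containing $\tau$, taken in an arbitrary fixed order; then $\bigcap\{\sigma\in\triangle(n-1):\tau\subseteq\sigma\}=\tau$, so $\|\Pi_\tau(I-P_\tau)\|\le\varepsilon'$ by the several-projection angle bound, while $P_\sigma P_\tau=P_\tau$ for each factor gives $\Pi_\tau P_\tau=P_\tau$, whence $\|\Pi_\tau-P_\tau\|=\|\Pi_\tau(I-P_\tau)\|\le\varepsilon'$ (the case of a single factor being trivial).

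Now fix $\tau,\tau'\subseteq\triangle$ and set $\rho=\tau\cap\tau'$. If $\tau=\tau'$ or $\tau=\triangle$ or $\tau'=\triangle$ then $\cos(\angle(P_\tau,P_{\tau'}))=0$ directly. Otherwise, using $P_{\tau'}P_\rho=P_\rho$ and then $P_\tau P_\rho=P_\rho$ one gets $P_\tau P_{\tau'}(I-P_\rho)=P_\tau P_{\tau'}-P_\rho$ (and symmetrically with the roles of $\tau,\tau'$ exchanged), so it suffices to bound $\|P_\tau P_{\tau'}-P_\rho\|$. Replacing $P_\tau,P_{\tau'},P_\rho$ by $\Pi_\tau,\Pi_{\tau'},\Pi_\rho$ at a cost $c_1(n)\varepsilon'$, the task becomes to estimate $\|\Pi_\tau\Pi_{\tau'}-\Pi_\rho\|$. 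Here the key combinatorial observation is that a vertex is omitted from $\rho$ exactly when it is omitted from $\tau$ or from $\tau'$; hence the $(n-1)$-faces containing $\rho$ are precisely those containing $\tau$ together with those containing $\tau'$. Consequently $\Pi_\tau\Pi_{\tau'}$ is a word of length $\le 2(n+1)$ in the alphabet of $(n-1)$-faces containing $\rho$, in which every letter occurs at least once, whereas $\Pi_\rho$ is a word in that same alphabet in which every letter occurs exactly once.

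The heart of the argument — and the step that genuinely replaces the orthogonal-complement trick available in Hilbert space — is then a rearrangement lemma: one transforms $\Pi_\tau\Pi_{\tau'}$ into $\Pi_\rho$ by coalescing the at most $n+1$ repeated letters and then reordering, each elementary step being either an application of $P_\sigma^2=P_\sigma$ or a transposition replacing $P_\sigma P_{\sigma'}$ by $P_{\sigma'}P_\sigma$ inside a word of length $\le 2(n+1)$, whose operator-norm cost is $\le \|[P_\sigma,P_{\sigma'}]\|\,\beta_0^{2(n+1)}\le 2\gamma\,\beta_0^{2(n+1)}$ (the commutator bound $\|[P_\sigma,P_{\sigma'}]\|\le 2\gamma$ coming, exactly as in the proof of Theorem~\ref{angle between several projections theorem}, from $P_\sigma P_{\sigma\cap\sigma'}=P_{\sigma\cap\sigma'}$). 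Since only $c_2'(n)$ such steps are needed, $\|\Pi_\tau\Pi_{\tau'}-\Pi_\rho\|\le c_2(n)\gamma$ for a constant $c_2(n)$ depending only on $n$; the point is that the word lengths are bounded purely in terms of $n$, so this (possibly enormous) constant is harmless once $\gamma$ is shrunk. Altogether $\cos(\angle(P_\tau,P_{\tau'}))\le c_1(n)\varepsilon'+c_2(n)\gamma$, and one closes the proof by setting $\beta:=\beta_0$, then, given $\varepsilon>0$, choosing $\varepsilon'$ with $c_1(n)\varepsilon'\le\varepsilon/2$, letting $\gamma_1$ be the constant Theorem~\ref{angle between several projections theorem} attaches to $\varepsilon'$, and taking $\gamma:=\min\{\gamma_1,\ \varepsilon/(2c_2(n))\}$. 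I expect the rearrangement lemma, together with the careful verification that its error depends only on $n$, to be the main obstacle; the rest is bookkeeping with the two structural facts above.
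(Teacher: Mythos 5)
Your proposal is correct, and it reaches the same destination as the paper's proof — a reduction to the multi-projection angle bound of Theorem \ref{angle between several projections theorem} plus the elementary commutator estimate $\Vert P_\sigma P_{\sigma'}-P_{\sigma'}P_\sigma\Vert\le 2\gamma$ — but through genuinely different intermediate machinery. The paper first proves an ``almost commutativity'' lemma for the \emph{limit} projections themselves (Lemma \ref{almost commutativity lemma}: $\Vert P_\tau P_{\tau'}-P_{\tau'}P_\tau\Vert\le\varepsilon$, obtained by approximating $P_\tau$ by $T_\tau^{i_0}$ and commuting the averaged operators), and then slides $P_\tau$ through the word $P_{\sigma_{j+1}}\cdots P_{\sigma_k}$ to land on $P_\tau P_{\sigma_0}\cdots P_{\sigma_k}(I-P_{\tau\cap\tau'})P_{\tau'}$; this is also why the paper must shrink $\beta$ to $\min\{\beta_0,\tfrac{2(n+1)+1}{2(n+1)}\}$, to control $\Vert T_{\tau'}\Vert^i$ uniformly. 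You instead replace each limit projection by the finite word $\Pi_\tau$ (the estimate $\Vert\Pi_\tau-P_\tau\Vert=\Vert\Pi_\tau(I-P_\tau)\Vert\le\varepsilon'$ is a clean and correct use of the definition of $\cos(\angle(P_{\sigma_0},\dots,P_{\sigma_k}))$ together with the automatic relations $P_\sigma P_\tau=P_\tau$), and then only ever need to commute \emph{elementary} projections inside words of length bounded by $n$. This buys you two things: Lemma \ref{almost commutativity lemma} is bypassed entirely, and $\beta=\beta_0$ suffices with no additional shrinking. The cost is the rearrangement lemma, but your accounting there is right: the faces containing $\tau\cap\tau'$ are exactly those containing $\tau$ together with those containing $\tau'$, the word lengths and hence the number of transpositions are bounded by functions of $n$ alone, each transposition costs at most $2\beta_0^{2(n+1)}\gamma$, and duplicate letters are removed for free by idempotence; so $\Vert\Pi_\tau\Pi_{\tau'}-\Pi_\rho\Vert\le c_2(n)\gamma$ as claimed. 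The only bookkeeping items to make explicit in a final write-up are that $\gamma$ should also be taken $\le\gamma_0$ so that Corollary \ref{Quick uniform convergence corollary} guarantees existence and the bound $\Vert P_\tau\Vert\le 4(n+1)+2$ (needed when you trade $P_\tau P_{\tau'}$ for $\Pi_\tau\Pi_{\tau'}$), and that the degenerate cases $\tau=\tau'$, $\tau=\triangle$, $\tau'=\triangle$ are dispatched via $P_\tau P_\triangle=P_\tau$ exactly as you indicate.
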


We will start with the following lemma asserting that under the assumptions of the above theorems the projections are bounded and ``almost'' commute: 
\begin{lemma}
\label{almost commutativity lemma}
Let $X$, $\triangle$ and $P_\sigma \in \triangle (n-1) \cup \triangle (n)$ be as in definition \ref{simplex projections definition} above. 
Assume that for every $\eta \in \triangle (n-2)$, the projection $P_\eta$ exists and for $\sigma \in \triangle (n-1)$, if $\eta \subset \sigma$ then $P_\eta P_\sigma = P_\eta$.

Then there is $\beta >1$ such that for every $\varepsilon >0$ there is $\gamma>0$ such that if 
$$ \max_{\sigma \in \triangle (n-1)} \lbrace \Vert P_\sigma \Vert \leq \beta \text{ and } \max \lbrace  \cos(\angle (P_\sigma,P_{\sigma ' })) : \sigma, \sigma ' \in \triangle (n-1) \rbrace \leq \gamma.$$
then the following holds:
\begin{enumerate}
\item For every $\tau \subseteq \triangle$, $P_\tau$ exists and $\Vert P_\tau \Vert \leq 4(n+1)+2$.
\item For every $\tau, \tau' \subseteq \triangle$, $\Vert P_\tau P_{\tau'} - P_{\tau'} P_\tau \Vert \leq \varepsilon$.
\end{enumerate}
\end{lemma}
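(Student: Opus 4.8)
The plan is to establish the two assertions in sequence: assertion~1 from the convergence-rate part of Corollary~\ref{Quick uniform convergence corollary}, and assertion~2 by approximating each $P_\tau$ with a finite power $T_\tau^m$ and estimating a commutator of polynomials in the $P_\sigma$'s.

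\textbf{Assertion 1.} First I would set $\beta=\beta_0$ and $\gamma\le\gamma_0$. For $\tau\subsetneq\triangle$ with $N:=n+1-|\tau|\ge 2$ I apply Corollary~\ref{Quick uniform convergence corollary} to the $N$ projections $P_\sigma$ with $\sigma\in\triangle(n-1)$, $\sigma\supseteq\tau$. The auxiliary projections required there are the $P_\eta$, $\eta\in\triangle(n-2)$: if $\sigma=\triangle\setminus\{i\}$ and $\sigma'=\triangle\setminus\{j\}$ both contain $\tau$, then $\eta:=\sigma\cap\sigma'=\triangle\setminus\{i,j\}\in\triangle(n-2)$ contains $\tau$; by hypothesis $P_\eta$ exists, and since $\sigma,\sigma'$ are the only two faces of $\triangle(n-1)$ containing $\eta$, $P_\eta$ is a projection on $\im P_\sigma\cap\im P_{\sigma'}$, while $P_\eta P_\sigma=P_\eta=P_\eta P_{\sigma'}$ is the other hypothesis. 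Hence $T_\tau^i\to P_\tau$ in operator norm with $\|P_\tau-T_\tau^i\|\le 4(n+1)\big(\tfrac{2n+1}{2n+2}\big)^{i-1}$ (the bound $4N(\tfrac{2N-1}{2N})^{i-1}$ is increasing in $N$), so in particular $\|P_\tau\|\le\|P_\tau-T_\tau\|+\|T_\tau\|\le 4(n+1)+\beta_0\le 4(n+1)+2$. For $|\tau|\ge n-1$ one has $T_\tau=P_\tau$ already (for $\tau\in\triangle(n-1)$ the defining average is its single term $P_\tau$, and $T_\triangle=P_\triangle$), so the bound is immediate from the norm hypothesis on the $P_\sigma$ (read with $\triangle(n)$ included).

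\textbf{Assertion 2.} The starting observation is that $\|P_\sigma P_{\sigma'}-P_{\sigma'}P_\sigma\|\le\|P_\sigma P_{\sigma'}-P_{\sigma\cap\sigma'}\|+\|P_{\sigma'}P_\sigma-P_{\sigma\cap\sigma'}\|\le 2\gamma$ for $\sigma,\sigma'\in\triangle(n-1)$. For $\tau\subsetneq\triangle$ the operator $T_\tau^m$ is a convex combination of products of $m$ projections $P_\sigma$ with $\sigma\in\triangle(n-1)$, $\sigma\supseteq\tau$, each product of norm $\le\beta_0^m\le 2^m$, and $\|T_\tau^m\|\le\|P_\tau\|+\|P_\tau-T_\tau^m\|\le 8(n+1)+2$. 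I would then bound a commutator of two such products $A=P_{\sigma_1}\cdots P_{\sigma_m}$, $B=P_{\rho_1}\cdots P_{\rho_m}$ by bubbling $B$ leftwards past $A$ through $m^2$ adjacent transpositions, each replacing $\cdots P_\sigma P_\rho\cdots$ by $\cdots P_\rho P_\sigma\cdots$ at cost $\le 2^{2m-2}\cdot 2\gamma$; this gives $\|AB-BA\|\le m^2\,2^{2m-1}\gamma$, hence $\|T_\tau^m T_{\tau'}^m-T_{\tau'}^m T_\tau^m\|\le m^2 2^{2m-1}\gamma$ after averaging. Finally, expanding $P_\tau P_{\tau'}-P_{\tau'}P_\tau$ as a telescoping sum of the five terms $(P_\tau-T_\tau^m)P_{\tau'}$, $T_\tau^m(P_{\tau'}-T_{\tau'}^m)$, $T_\tau^m T_{\tau'}^m-T_{\tau'}^m T_\tau^m$, $T_{\tau'}^m(T_\tau^m-P_\tau)$, $(T_{\tau'}^m-P_{\tau'})P_\tau$ and using the norm bounds from assertion~1 together with $\|P_\bullet-T_\bullet^m\|\le 4(n+1)(\tfrac{2n+1}{2n+2})^{m-1}$ yields
$$\|P_\tau P_{\tau'}-P_{\tau'}P_\tau\|\le 4\big(8(n+1)+2\big)\cdot 4(n+1)\Big(\tfrac{2n+1}{2n+2}\Big)^{m-1}+m^{2}2^{2m-1}\gamma .$$
I first choose $m=m(n,\varepsilon)$ so the first summand is $\le\varepsilon/2$, and then $\gamma\le\gamma_0$ small enough that $m^2 2^{2m-1}\gamma\le\varepsilon/2$. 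When $\tau$ or $\tau'$ lies in $\triangle(n-1)\cup\triangle(n)$ the relevant approximation is exact, so those error terms vanish; and when $\tau=\triangle$, Remark~\ref{P-triangle always commute remark} gives $P_{\tau'}P_\triangle=P_{\tau'}$, reducing the claim to $\|(I-P_\triangle)P_{\tau'}\|\le\varepsilon$, which follows the same way once one uses $P_\triangle P_\sigma=P_\sigma$ for $\sigma\in\triangle(n-1)$ — a relation available in the setting where the lemma is applied.

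\textbf{Main obstacle.} Without the consistency assumption one cannot write $P_\tau$ as an exact product of the $P_\sigma$ with $\sigma\in\triangle(n-1)$ (which is what the proof of Theorem~\ref{small angle theorem} exploited); the substitute is the uniform polynomial approximation above, and the point that makes it work is that the depth $m$ may be fixed \emph{before} $\gamma$ is shrunk — legitimate precisely because the convergence rate in Corollary~\ref{Quick uniform convergence corollary} is uniform in $\gamma$ for $\gamma\le\gamma_0$ (it tends to $\tfrac{1+(N-2)\beta}{N}\le\tfrac{2N-1}{2N}$ as $\gamma\to 0$). The remaining work — the discrete Leibniz-type expansion of the commutator of two degree-$m$ products — is routine bookkeeping. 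The one genuine wrinkle is the corner case $\tau=\triangle$, where $P_\triangle$ is neither a polynomial in the $P_\sigma$, $\sigma\in\triangle(n-1)$, nor norm-controlled by the stated hypotheses; this is harmless in every situation in which the lemma is used, since there $P_\triangle$ commutes with each $P_\sigma$.
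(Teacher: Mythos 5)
Your proof is correct and follows essentially the same route as the paper's: approximate $P_\tau$ by $T_\tau^m$ with $m$ fixed independently of $\gamma$ (legitimate by the uniform convergence rate of Corollary~\ref{Quick uniform convergence corollary}), bound the commutator of the degree-$m$ polynomials by $O(m^2 (\mathrm{const})^m\gamma)$ via pairwise transpositions costing $2\gamma$ each, and only then shrink $\gamma$. The only differences are cosmetic — you keep $\beta=\beta_0$ and control $\Vert T_\tau^m\Vert$ by $\Vert P_\tau\Vert+\Vert P_\tau-T_\tau^m\Vert$ where the paper instead shrinks $\beta$ to $\tfrac{2(n+1)+1}{2(n+1)}$ to tame $\Vert T_{\tau'}\Vert^i$, and you explicitly flag the $\tau=\triangle$ corner case (where $P_\triangle P_\sigma=P_\sigma$ is not among the hypotheses) which the paper silently glosses over.
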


\begin{proof}
Take $\beta = \min \lbrace \beta_0, \frac{2(n+1)+1}{2(n+1)} \rbrace$ and $\gamma \leq \gamma_0$. The proof of the first assertion is identical to the one given in the proof of Theorem \ref{small angle theorem} above (note that the consistency assumption was not used in this proof).

We are left with proving the second assertion.  Fix $\varepsilon >0$ and assume that $\gamma \leq \gamma_0$. Then for every $\tau, \tau' \subseteq \triangle$ and every $i \in \mathbb{N}$ the following holds:
\begin{dmath*}
\Vert P_\tau P_{\tau'} - T_\tau^i T_{\tau'}^i \Vert \leq \Vert P_\tau  (P_{\tau'} - T_{\tau'}^i) \Vert + \Vert (P_\tau - T_\tau^i) T_{\tau'}^i \Vert \leq \\
 \Vert P_\tau \Vert \Vert P_{\tau'} - T_{\tau'}^i \Vert + \Vert P_\tau - T_\tau^i \Vert \Vert T_{\tau'} \Vert^i. 
\end{dmath*}
Note that $\Vert T_{\tau'} \Vert \leq \beta \leq \frac{2(n+1)+1}{2(n+1)}$ and that $\Vert P_\tau \Vert \leq 4(n+1)+2$. Combining these bounds with Corollary \ref{Quick uniform convergence corollary} yields
\begin{dmath*}
\Vert P_\tau P_{\tau'} - T_\tau^i T_{\tau'}^i \Vert \leq (4(n+1)+2) 4(n+1) \left( \frac{2(n+1)-1}{2(n+1)} \right)^{i-1} + (4(n+1)+2) \left( \frac{4(n+1)^2-1}{4(n+1)^2} \right)^{i-1}.  
\end{dmath*} 
The right-hand side of the above inequality goes to $0$ as $i$ tends to $\infty$ and therefore we can choose $i_0$ such that $\Vert P_\tau P_{\tau'} - T_\tau^{i_0} T_{\tau'}^{i_0} \Vert \leq \frac{\varepsilon}{4}$ (note that this choice of $i_0$ holds for every $\gamma \leq \gamma_0$). Similarly, $\Vert P_{\tau '} P_{\tau} - T_{\tau '}^{i_0} T_{\tau}^{i_0} \Vert \leq \frac{\varepsilon}{4}$ and therefore 
$$\Vert P_\tau P_{\tau'} - P_{\tau '} P_{\tau} \Vert \leq \dfrac{\varepsilon}{2} + \Vert T_{\tau}^{i_0} T_{\tau'}^{i_0} - T_{\tau'}^{i_0} T_{\tau}^{i_0} \Vert.$$
We are left to prove that by choosing $\gamma$ small enough, we can assure that 
$$\Vert T_{\tau}^{i_0} T_{\tau'}^{i_0} - T_{\tau'}^{i_0} T_{\tau}^{i_0} \Vert \leq \dfrac{\varepsilon}{2},$$
when $i_0$ is fixed. As in the proof of Theorem \ref{angle between several projections theorem}, we note that for any $\sigma, \sigma ' \in \triangle (n-1)$, 
$$\Vert P_\sigma P_{\sigma'} - P_{\sigma'} P_\sigma \Vert \leq 2 \gamma.$$
Therefore $\Vert T_{\tau} T_{\tau'} - T_{\tau'} T_{\tau} \Vert \leq 2 \gamma$. By permuting pairwise $T_\tau$ and $T_{\tau'}$ we get that 
$$\Vert T_{\tau}^{i_0} T_{\tau'}^{i_0} - T_{\tau'}^{i_0} T_{\tau}^{i_0} \Vert \leq 2 i_0^2 \Vert T_\tau \Vert^{i_0-1} \Vert T_{\tau'} \Vert^{i_0-1} \gamma \leq 2 i_0^2 \left( \frac{2(n+1)+1}{2(n+1)} \right)^{i_0} \gamma.$$
Recall that $i_0$ is fixed and therefore we can choose $\gamma$ small enough such that 
$$\Vert T_{\tau}^{i_0} T_{\tau'}^{i_0} - T_{\tau'}^{i_0} T_{\tau}^{i_0} \Vert \leq \dfrac{\varepsilon}{2},$$
and we are done. 
\end{proof}

After this, we are ready to prove Theorem \ref{small angle theorem without consistency assumption theorem} above:
\begin{proof}
Let $\beta$ be as in Lemma \ref{almost commutativity lemma} (note that $\beta \leq \beta_0$). Fix $\varepsilon >0$ and let $\varepsilon_1 > 0$, $\varepsilon_2 >0$ be constants that will be determined later. Let $\gamma_1$ be the bound of the cosine of the angles of Theorem \ref{angle between several projections theorem} that correspond to $\varepsilon_1$. Similarly, let $\gamma_2$ be the bound of the cosine of the angles of Lemma \ref{almost commutativity lemma} that correspond to $\varepsilon_2$. Choose $\gamma = \min \lbrace \gamma_1, \gamma_2 \rbrace$. 

Let $\tau, \tau ' \subseteq \triangle$. Without loss of generality, it is sufficient to show that
$$\Vert P_\tau P_{\tau'} (I-P_{\tau \cap \tau'}) \Vert \leq \varepsilon.$$
Note that by the same arguments of the proof of Theorem \ref{small angle theorem}, we can assume that $\tau \cap \tau ' \in \triangle (n-1-k)$ with $0 \leq k \leq n$.

Let $\sigma_0,...,\sigma_k \in \triangle (n-1)$ be the pairwise disjoint simplices that contain $\tau \cap \tau'$. Without loss of generality we can assume that
$$\tau \subseteq \sigma_0,...,\tau \subseteq \sigma_j \text{ and } \tau' \subseteq \sigma_{j+1},...,\tau' \subseteq \sigma_k.$$
We note that
$$P_\tau = P_{\sigma_0} ... P_{\sigma_j} P_\tau ,$$
and
$$P_{\tau'} = P_{\sigma_{j+1}} ... P_{\sigma_k} P_{\tau'}.$$
Therefore
$$P_\tau P_{\tau'} (I-P_{\tau \cap \tau'}) = P_{\sigma_0} ... P_{\sigma_j} P_\tau P_{\sigma_{j+1}} ... P_{\sigma_k} P_{\tau'} (I-P_{\tau \cap \tau'}).$$
We note that 
\begin{dmath*}
\Vert P_{\sigma_0} ... P_{\sigma_j} P_\tau P_{\sigma_{j+1}} ... P_{\sigma_k} P_{\tau'} (I-P_{\tau \cap \tau'}) \Vert \leq \\
\Vert P_{\sigma_0} ... P_{\sigma_{j-1}} (P_{\sigma_{j}} P_\tau -P_\tau P_{\sigma_{j}}) P_{\sigma_{j+1}} ... P_{\sigma_k} P_{\tau'} (I-P_{\tau \cap \tau'}) \Vert + \\
\Vert P_{\sigma_0} ... P_{\sigma_{j-1}} P_\tau P_{\sigma_{j}} P_{\sigma_{j+1}} ... P_{\sigma_k} P_{\tau'} (I-P_{\tau \cap \tau'}) \Vert \leq \\
(4(n+1)+2)^{k+1} (4(n+1)+3) \varepsilon_2 + \\
\Vert P_{\sigma_0} ... P_{\sigma_{j-1}} P_\tau P_{\sigma_{j}} P_{\sigma_{j+1}} ... P_{\sigma_k} P_{\tau'} (I-P_{\tau \cap \tau'}) \Vert,
\end{dmath*}
where the last inequality is due to Lemma \ref{almost commutativity lemma}. Applying the same argument several times, we get that 
\begin{dmath*}
\Vert P_\tau P_{\tau'} (I-P_{\tau \cap \tau'}) \Vert = \Vert P_{\sigma_0} ... P_{\sigma_j} P_\tau P_{\sigma_{j+1}} ... P_{\sigma_k} P_{\tau'} (I-P_{\tau \cap \tau'}) \Vert \leq \\ 
(j+2)(4(n+1)+2)^{k+1} (4(n+1)+3) \varepsilon_2 + \Vert  P_\tau P_{\sigma_0} ... P_{\sigma_k} (I-P_{\tau \cap \tau'}) P_{\tau'}  \Vert \leq \\
(j+2)(4(n+1)+2)^{k+1} (4(n+1)+3) \varepsilon_2 +  (4(n+1)+2)^2 \cos (\angle (P_{\sigma_0},...,P_{\sigma_k})) \leq \\
(j+2)(4(n+1)+2)^{k+1} (4(n+1)+3) \varepsilon_2 +  (4(n+1)+2)^2 \varepsilon_1 \leq \\
(n+2)(4(n+1)+2)^{n+1} (4(n+1)+3) \varepsilon_2 +  (4(n+1)+2)^2 \varepsilon_1
\end{dmath*}

Therefore choosing 
$$\varepsilon_1 = \frac{\varepsilon}{2(4(n+1)+2)^2}, \varepsilon_2 = \frac{\varepsilon}{2(n+2)(4(n+1)+2)^{n+1} (4(n+1)+3)},$$ 
yields the needed inequality.

\end{proof}

\bibliographystyle{plain}
\bibliography{bibl}

\end{document}